\newenvironment{clmproof}[1]{\begin{proof}[Proof of Claim~\ref{#1}]\let\qednow\qedsymbol\renewcommand{\qedsymbol}{}}{\; \qednow \end{proof}}
\newtheorem{theorem}{Theorem}[section]
\newtheorem{lemma}[theorem]{Lemma}
\newtheorem{claim}[theorem]{Claim}
\newtheorem{corollary}[theorem]{Corollary}
\newtheorem{obs}[theorem]{Observation}
\newtheorem{conjecture}[theorem]{Conjecture}
\theoremstyle{definition}
\newtheorem{definition}[theorem]{Definition}
\newtheorem{remark}[theorem]{Remark}
\newcommand{\newoddpage}{\newpage\ifodd\value{page}\else\ \newpage\fi}
\numberwithin{equation}{section}
\newcommand\pig[1]{\scalerel*[5pt]{\big#1}{%
  \ensurestackMath{\addstackgap[1.5pt]{\big#1}}}}
\newcommand\pigl[1]{\mathopen{\pig{#1}}}
\newcommand\pigr[1]{\mathclose{\pig{#1}}}
\renewcommand{\Pr}{\mathbb{P}}
\newcommand{\Ex}{\mathbb{E}}
\renewcommand{\le}{\leqslant}
\renewcommand{\ge}{\geqslant}
\newcommand{\cC}{\mathcal{C}}
\newcommand{\cF}{\mathcal{F}}
\newcommand{\cG}{\mathcal{G}}
\newcommand{\cH}{\mathcal{H}}
\newcommand{\cP}{\mathcal{P}}
\newcommand{\cS}{\mathcal{S}}
\newcommand{\cT}{\mathcal{T}}
\newcommand{\cW}{\mathcal{W}}
\newcommand\E{\operatorname{\mathbb E{}}}
\newcommand{\eps}{\varepsilon}
\newcommand{\cc}{\mathrm{c}}
\newcommand{\bb}[1]{\bigl(#1\bigr)}
\newcommand{\Bb}[1]{\Bigl(#1\Bigr)}
\newcommand{\Bigmid}{\Bigm|}
\newcommand{\tH}{\tilde{H}}
\newcommand{\hb}{{\hat{\theta}}}
\newcommand{\hC}{{\widehat{C}}}
\newcommand{\hCL}{{\hC_{\mathrm{L}}}}
\newcommand{\N}{\mathbb{N}}
\newcommand{\R}{\mathbb{R}}
\newcommand{\1}{\mathbbm{1}}
\newcommand{\ind}[1]{\1[#1]}
\newcommand{\sjn}{\sum_{j\textup{ neutral}}}
\newcommand{\pjn}{\prod_{j\text{ neutral}}}
\newcommand{\sjs}{\sum_{j\textup{ simple}}}
\newcommand{\pjs}{\prod_{j\text{ simple}}}
\newcommand{\sjc}{\sum_{j\text{ complex}}}
\newcommand{\pjc}{\prod_{j\text{ complex}}}
\newcommand\heps{\hat{\eps}}
\newcommand\hdelta{\hat{\delta}}
\newcommand{\Bin}{\mathrm{Bin}}
\newcommand{\floor}[1]{\lfloor #1 \rfloor}
\DeclareMathOperator{\Var}{Var}
\begin{document}

\title{Random cliques in random graphs revisited}


\author{Robert Morris\thanks{IMPA, Estrada Dona Castorina 110, Jardim Bot\^anico, Rio de Janeiro 22460-320, Brazil. Email: rob@impa.br} \thanks{RM was partially supported by FAPERJ (Proc.~E-26/200.977/2021) and by CNPq (Procs~303681/2020-9 and~407970/2023-1)} \hspace{0.01cm} and Oliver Riordan\thanks{Mathematical Institute, University of Oxford, Radcliffe Observatory Quarter, Woodstock Road, Oxford, OX2 6GG, UK. Email: riordan@maths.ox.ac.uk} \thanks{For the purpose of open access, the author has applied a CC BY public copyright licence to any author accepted manuscript arising from this submission.}}


\maketitle

\begin{abstract}
We study the distribution of the set of copies of some given graph $H$ in the random graph $G(n,p)$, focusing on the case when $H = K_r$. Our main results capture the `leading term' in the difference between this distribution and the `independent hypergraph model', where (in the case $H = K_r$) each copy is present independently with probability $\pi = p^{\binom{r}{2}}$. As a concrete application, we derive a new upper bound on the number of $K_r$-factors in $G(n,p)$ above the threshold for such factors to appear. 

We will prove our main results in a much more general setting, so that they also apply to random hypergraphs, and also (for example) to the case when $p$ is constant and $r = r(n) \sim 2\log_{1/p}(n)$. 
\end{abstract}

\section{Introduction}

Our aim in this paper is to study the distribution of the copies of some given graph $H$ in a binomial random graph $G(n,p)$. Intuitively, if we do not expect to see any pairs of copies of $H$ sharing edges, then not only will the number of copies of $H$ be asymptotically Poisson, but the random set of copies will be close to a random subset of all possible copies of $H$ in which each appears independently with probability $\pi  = p^{e(H)}$. As a warm-up for our main result, we formalise this in Section~\ref{sec_wu}. We then consider the case in which intersecting pairs are likely to appear, but larger `clusters' are unlikely, or there are not too many of them. Here we prove a result capturing the effect of intersecting pairs on the distribution of the set of copies, with an error term involving the expectation of larger clusters. Finally, we will give an application of this result, bounding the number of $K_r$-factors in $G(n,p)$ for $r\ge 3$ constant.

Throughout, we consider the random \emph{set} of copies of $H$, not just the number of such copies. This paper is in some sense inspired by~\cite{R-copies}, where the second author studied copies of $K_r$ in $G(n,p)$ via a coupling argument, but the technical approach we take here will be very different, arguing not via coupling, but by directly estimating the probability of each `plausible' outcome.

We will state and prove our main results in a more general setting (so that they also apply, for example, to hypergraphs), which makes them a little hard to absorb quickly (see Theorems~\ref{thmain} and~\ref{th2way}). We will therefore begin by giving two relatively simple consequences for the case of copies of $K_r$ in $G(n,p)$, first for cliques of constant size, and then for larger cliques when $p$ is constant.

\subsection{Constant-size cliques in $G(n,p)$}\label{sscl1}

For any graph $G$ and $r \in \N$, let us write $H_r(G)$ for the $r$-uniform hypergraph on $V(G)$ that encodes the $r$-cliques in $G$, that is, 
$$S \in E(H_r(G)) \qquad \Leftrightarrow \qquad G[S] \cong K_r.$$ 
Our first theorem bounds the probability that $H_r(G(n,p))$ is equal to a fixed hypergraph $H$ (for `most' choices of $H$) 
as long as $\mu_r \le n^{1 + \eps}$ for some small constant $\eps > 0$, where
$$\mu_r = {n \choose r} p^{{r \choose 2}}$$ 
denotes the expected number of edges of $H_r(G(n,p))$. In order to state it, let
\begin{equation}\label{def:Delta}
\Lambda(n,r) = \frac{1}{2} \sum_{s = 2}^{r-1} \binom{n}{r}\binom{r}{s}\binom{n-r}{r-s} p^{2\binom{r}{2}} \Big( p^{-\binom{s}{2}} - 1 \Big)
\end{equation}
and observe that $\Lambda(n,r)$ is the difference between the expected number of pairs of edge-overlapping copies of $K_r$ in $G(n,p)$, and the expected number of pairs of hyperedges in the random $r$-uniform hypergraph $H_r(n,\pi)$ that intersect in at least two vertices, where $\pi = p^{\binom{r}{2}}$. Moreover, given an $r$-uniform hypergraph $H$, let us write $G(H)$ for the graph whose edges are all pairs of vertices of $H$ that are contained in some hyperedge of $H$, and define
\begin{equation}\label{tdef}
 t(H) = \binom{r}{2} e(H) - e\big( G(H) \big).
\end{equation}
Observe that $t(H)$ is equal to the number of `repeated' pairs of vertices of $H$, where a pair that is contained in $k$ hyperedges is counted $k-1$ times. 

We are now ready to state the first application of our main theorems.

\begin{theorem}\label{thm:Kr:intro}
For each $r \ge 5$, there exists $\gamma > 0$ such that if\/ $p \le n^{-2/r+\gamma}$ and~$\mu_r \to \infty$, then the following holds. There exists a collection $\cG$ of $r$-uniform hypergraphs, with $\Pr\big( H_r(G(n,p)) \in \cG \big) \to 1$ as $n \to \infty$, such that\,\footnote{Here, and throughout the paper, $\omega = \omega(n)$ is a function with $\omega(n) \to \infty$ as $n \to \infty$ arbitrarily slowly. The rate at which $\Pr\big( H_r(G(n,p)) \in \cG \big) \to 1$ in Theorem~\ref{thm:Kr:intro} depends on $\omega$.}
$$\Pr\bb{ H_r(G(n,p)) = H } = \pi^{e(H)} (1-\pi)^{{n \choose r} - e(H)} p^{-t(H)} e^{ - \Lambda(n,r) + O(\omega \mu_{r+1} + n^{-\gamma})}$$
for every $H \in \cG$. 
\end{theorem}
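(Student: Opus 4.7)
The plan is to combine a direct enumeration with a pair-correlation cluster expansion---essentially the argument that Theorems~\ref{thmain} and~\ref{th2way} will carry out in a far more general setting. First I would define $\cG$ to be the family of $r$-uniform hypergraphs $H$ on $[n]$ satisfying several ``typicality'' conditions: (i) $e(H) = (1+o(1))\mu_r$; (ii) for each $2 \le s \le r-1$, the number of pairs $T, T' \in E(H)$ with $|T \cap T'| = s$ is close to its expectation in $H_r(G(n,p))$; and (iii) there are at most $\omega \mu_{r+1}$ triples of hyperedges of $H$ pairwise overlapping in $\ge 2$ vertices. The hypothesis $p \le n^{-2/r + \gamma}$ makes $\mu_{r+1}$ a negative power of $n$, so standard first- and second-moment calculations applied to $H_r(G(n,p))$ yield $\Pr[H_r(G(n,p)) \in \cG] \to 1$.

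For fixed $H \in \cG$ I would factor
$$
\Pr[H_r(G(n,p)) = H] = p^{e(G(H))} \cdot \Pr\bigl[\, \text{no } S \in \binom{[n]}{r} \setminus E(H) \text{ spans a clique} \bigmid G \supseteq G(H) \,\bigr].
$$
By the definition of $t(H)$, the first factor equals $p^{\binom{r}{2}e(H) - t(H)} = \pi^{e(H)} p^{-t(H)}$, which produces two of the factors in the claimed expression. For the conditional probability, let $A_S$ be the event that $S$ spans a clique and let $k_S$ denote the number of edges of $G(H)$ inside $S$, so that $q_S := \Pr[A_S \mid G \supseteq G(H)] = p^{\binom{r}{2} - k_S}$. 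A Janson-type cluster expansion then yields
$$
\log \Pr\bigl[\, \bigcap_S \overline{A_S} \,\bigr] = \sum_S \log(1 - q_S) + \Psi + (\text{triple and higher corrections}),
$$
with leading pair correction $\Psi = \sum_{\{S, S'\}\,:\,|S \cap S'| \ge 2}\bigl(\Pr[A_S \cap A_{S'}] - q_S q_{S'}\bigr)$.

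A direct computation---using that for typical pairs $S, S'$ with $|S \cap S'| = s \ge 2$ the summand equals $p^{2\binom{r}{2}}(p^{-\binom{s}{2}} - 1)$, together with $H \in \cG$---gives $\Psi = \Lambda(n,r) + \mathrm{err}$. Meanwhile $\sum_S \log(1-q_S)$ splits into the contribution from $S$ with $k_S = 0$, which equals $(\binom{n}{r}-e(H))\log(1-\pi) + \mathrm{err}$, and from $S$ with $k_S \ge 1$, which after Taylor expansion equals $-\pi e(H)\sum_{s=2}^{r-1}\binom{r}{s}\binom{n-r}{r-s}(p^{-\binom{s}{2}} - 1) + \mathrm{err}$; using $e(H) \sim \mu_r = \binom{n}{r}\pi$ this simplifies to $-2\Lambda(n,r) + \mathrm{err}$. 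Adding these gives $(\binom{n}{r} - e(H))\log(1-\pi) - \Lambda(n,r) + \mathrm{err}$ for the exponent, matching the formula exactly.

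The main obstacle is controlling the error term to the precision $O(\omega \mu_{r+1} + n^{-\gamma})$. The $O(\omega \mu_{r+1})$ contribution absorbs the triple and higher cluster corrections: any three $r$-cliques pairwise overlapping in $\ge 2$ vertices sit on few vertices (essentially inside a common $K_{r+1}$), so their total expected contribution is bounded by $O(\mu_{r+1})$, with an extra $\omega$ factor coming from the concentration allowed by property (iii). The $n^{-\gamma}$ contribution absorbs the Taylor-remainder errors from expanding the various $\log(1-q_S)$'s---the assumption $p \le n^{-2/r+\gamma}$ makes $\binom{n}{r}\pi^2$ and similar products negative powers of $n$---together with the deviations from expectation allowed inside $\cG$ and the minor corrections from $S$'s overlapping $G(H)$ at multiple places. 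Converting this informal pair-expansion into a rigorous multiplicative bound with error of exactly the claimed form, uniformly over all $H \in \cG$, is the true technical heart of the argument, and it is precisely what the general Theorems~\ref{thmain} and~\ref{th2way} will be designed to deliver.
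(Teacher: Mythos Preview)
Your proposal is correct and follows essentially the same strategy as the paper: factor out $p^{e(G(H))} = \pi^{e(H)} p^{-t(H)}$, expand the conditional no-extra-clique probability to second order in the dependency structure, and control third-order corrections via the expected number of $3$-clusters, which for $r\ge 5$ is $O(\mu_{r+1})$ (this is the content of Lemma~\ref{Krclust}); the paper packages the rigorous second-order expansion into the abstract Theorems~\ref{thmain} and~\ref{th2way} and then specialises via Theorem~\ref{thm:Kr}, exactly as your final paragraph anticipates. One small correction: $\mu_{r+1}$ is \emph{not} a negative power of $n$ throughout the range $p\le n^{-2/r+\gamma}$---at the top end it is $n^{\Theta(\gamma)}$---so the bound is only an asymptotic formula when $p=o(n^{-2/r})$; this does not affect the argument, only your interpretation of the error term. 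Also, in your condition~(iii) you should bound the number of $3$-clusters (connected triples in the overlap graph), not just triples that \emph{pairwise} overlap; the latter is a strictly smaller family and would not suffice to absorb all third-order corrections.
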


Roughly speaking, Theorem~\ref{thm:Kr:intro} says that the copies of $K_r$ in $G(n,p)$ arise essentially independently, except for a bias towards edge-intersecting pairs of copies, with the weighting corresponding to how much more likely such pairs are in $G(n,p)$ than in $H_r(n,\pi)$. Indeed, note that the random hypergraph $H_r(n,\pi)$ is equal to $H$ with probability exactly $\pi^{e(H)} (1-\pi)^{{n \choose r} - e(H)}$, and the term $p^{-t(H)}$ compensates for the difference between $\binom{r}{2} e(H)$ and the number of edges of $G(H)$. The term $e^{-\Lambda(n,r)}$ is perhaps more surprising; to explain it, let $Z$ denote the number of pairs of edges of $H_r(n,\pi)$ that intersect in exactly two vertices, and note that the distribution of $Z$ is roughly Poisson with mean $\lambda=\E[Z]$. Moreover, if $Z$ were exactly Poisson, then a standard calculation would give 
$$\E\big[ p^{-Z} \big] = \exp\big( \E[Z/p] - \E[Z] \big) = e^{\lambda' -\lambda},$$
where $\lambda'$ is the expected number of pairs of copies of $K_r$ in $G(n,p)$ that overlap in a single edge. A similar argument applies for other intersection sizes, so the formula in the theorem corresponds (roughly) to the distribution of $H_r(n,\pi)$ reweighted by $p^{-t(H)}$, with $e^{-\Lambda(n,r)}$ as normalizing factor. Of course this is not anything like a proof of the result, just an argument that it is plausible. The content of the theorem is that one can, in fact, use this relatively simple modification of the binomial random hypergraph $H_r(n,\pi)$ as a substitute for the (much more complicated) distribution of $H_r(G(n,p))$, even when this distribution is not close to that of $H_r(n,\pi)$. In particular, note that if $p = o( n^{-2/r} )$
then $\mu_{r+1} = o(1)$, and therefore Theorem~\ref{thm:Kr:intro} gives an asymptotic formula for the probability of the event $\{ H_r(G(n,p)) = H \}$, and thus an essentially complete description of the distribution of $H_r(G(n,p))$, up to total variation distance $o(1)$.

Theorem~\ref{thm:Kr:intro} will be finally proved in Section~\ref{secKr}, as a consequence of two more technical results, Theorem~\ref{thmain}, which will give the upper bound, and Theorem~\ref{th2way}, which extends this to a two-sided bound. We remark that we will also prove a similar result when $r \in \{3,4\}$, but in these cases the error terms are somewhat more complicated, so we defer the statement to Theorem~\ref{thm:Kr}. 

\subsection{The number of $K_r$-factors in $G(n,p)$}\label{sec:intro:factors}

While describing the distribution of the copies of $K_r$ in $G(n,p)$ seems to us to be a natural aim in its own right, one might wonder whether Theorem~\ref{thm:Kr:intro} has any concrete applications. We will show that the answer is yes, considering the \emph{number} of $K_r$-factors in $G(n,p)$ above the threshold for such factors to appear. This threshold was determined up to a constant factor in a famous paper of Johansson, Kahn and Vu~\cite{JKV}, and it was recently shown that moreover 
\begin{equation}\label{p0def}
 q_r(n) = \big( (r-1)!\log n \big)^{1/\binom{r}{2}} n^{-2/r}
\end{equation}
is the sharp threshold for $G(n,p)$ to contain a $K_r$-factor; more precisely, this follows from the work of Kahn~\cite{Kahn-asymp} on perfect matchings in random hypergraphs, together with the couplings of the second author~\cite{R-copies} and Heckel~\cite{Heckel-copies}. 

There has been significant interest in recent years in the problem of bounding the number of $K_r$-factors in dense graphs, and related spanning structures such as perfect matchings and Hamilton cycles in dense hypergraphs, see for example~\cite{ABCDJMRS,FLM,JLS,KKKOP, KMP,KSW,MP,PSSS}. Perhaps surprisingly, this question turns out to be closely related to finding $K_r$-factors in $G(n,p)$, and several of these recent results have been deduced from generalisations of the Johansson--Kahn--Vu Theorem. In particular, Allen, Böttcher, Corsten, Davies, Jenssen, P.~Morris, Roberts and Skokan~\cite{ABCDJMRS} determined the threshold for the existence of a triangle-factor in a random subgraph of a graph $G$ with minimum degree at least $2n/3$, and used this result to deduce a bound on the number of triangle-factors in $G$ that is sharp up to a factor of roughly $(\log n)^{n/3}$. These results were then strengthened and generalised in~\cite{KMP,PSSS} using the recent breakthroughs on the Kahn--Kalai conjecture by Frankston, Kahn, Narayanan and Park~\cite{FKNP} and Park and Pham~\cite{PP}, together with the couplings from~\cite{Heckel-copies,R-copies}. For further background on clique factors in $G(n,p)$, see Section~\ref{secappl}.
 
Given $r \ge 3$ and a graph $G$, let us write $F_r(G)$ for the number of $K_r$-factors in $G$, let $H_r(n,m)$ denote the uniformly random $r$-uniform hypergraph with $n$ vertices and $m$ edges, and observe that if $n$ is a multiple of $r$, then the expected number of perfect matchings in $H_r(n,m)$ is exactly
\[
 \Sigma(n,m) := \frac{n!}{r!^{n/r}(n/r)!} \frac{(m)_{n/r}}{(N)_{n/r}},
\]
where $N=\binom{n}{r}$ and $(x)_k$ denotes the falling factorial $x(x-1)\cdots (x-k+1)$. We will prove the following theorem.

\begin{theorem}\label{thm:factorpweak}
Let $r \ge 3$ and $\eps > 0$ be constants, and suppose that $p = p(n)$ satisfies $p \ge (1+\eps) q_r(n)$ and $p=n^{-2/r+o(1)}$. If\/ $n \in \N$ and\/ $r \,|\, n$, then 
\begin{equation}\label{Fr1}
F_r(G(n,p)) = e^{o(n/\log n)} \, \Sigma(n,m)
\end{equation}
with high probability, where $m = \big\lfloor \binom{n}{r}p^{\binom{r}{2}} \big\rfloor$ is the expected number of copies of $K_r$ in $G(n,p)$ rounded to an integer.
\end{theorem}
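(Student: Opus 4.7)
The plan is to prove matching upper and lower bounds on $F_r(G(n,p))=M(H_r(G(n,p)))$, the number of perfect matchings in the random hypergraph of $K_r$-copies. The lower bound will come from existing couplings combined with a sharp matching-count result; the new ingredient is the upper bound, which uses Theorem~\ref{thm:Kr:intro} in an essential way.

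Start by noting that a plain Markov argument is insufficient: by linearity, $\E[F_r]=P\cdot\pi^{n/r}$ where $P=n!/((r!)^{n/r}(n/r)!)$, since any $K_r$-factor uses $\binom{r}{2}(n/r)$ distinct edges. Comparing with $\Sigma(n,m)=P\cdot(m)_{n/r}/(N)_{n/r}$ and using $(m)_{n/r}/(N)_{n/r}=\pi^{n/r}\exp(-(1+o(1))(n/r)^2/(2m))$ yields $\E[F_r]/\Sigma(n,m)=e^{\Theta(n/\log n)}$ in the regime $m\asymp n\log n$ (near the threshold). Thus Markov alone gives only $F_r\le\Sigma(n,m)e^{\Theta(n/\log n)}$ w.h.p., off by an entire $e^{\Theta(n/\log n)}$ factor.

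For the sharp upper bound I would proceed in two steps. First, let $X=|E(H_r(G(n,p)))|$ be the number of $K_r$-copies; Chebyshev, using $\Var(X)=\mu_r+2\Lambda(n,r)=O(n\log n)$ in the regime $p=n^{-2/r+o(1)}$, gives $X=m(1+o(1/\log n))$ w.h.p. Second, for $H\in\cG$, Theorem~\ref{thm:Kr:intro} says
\[
\Pr\bigl[H_r(G(n,p))=H\bigr]\le\pi^{e(H)}(1-\pi)^{N-e(H)}\,p^{-t(H)}\,e^{-\Lambda(n,r)+o(1)},
\]
so on $\cG$ the law of $H_r(G(n,p))$ is that of $H_r(n,\pi)$ reweighted by $p^{-t(H)}e^{-\Lambda+o(1)}$, a factor that is $e^{o(n/\log n)}$ for $H$ with $t(H)$ near its typical polylogarithmic value. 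In particular, the event $\{M(H)>\Sigma(n,e(H))e^{o(n/\log n)}\}$, which has $o(1)$ probability under $H_r(n,\pi)$ by a sharp form of Kahn's theorem~\cite{Kahn-asymp}, also has $o(1)$ probability under $H_r(G(n,p))$. Combining with the concentration of $X$ and $\Sigma(n,m(1+o(1/\log n)))=\Sigma(n,m)e^{o(n/\log n)}$ gives the upper bound.

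For the lower bound I would use the couplings of~\cite{R-copies,Heckel-copies} to obtain $H_r(n,m')\subseteq H_r(G(n,p))$ w.h.p.\ for some $m'=m(1-o(1))$, then invoke a sharp version of Kahn's theorem to get $M(H_r(n,m'))\ge\Sigma(n,m')e^{-o(n/\log n)}\ge\Sigma(n,m)e^{-o(n/\log n)}$. The main obstacle is in the upper bound, in controlling the $p^{-t(H)}$-reweighting from Theorem~\ref{thm:Kr:intro}: although for atypical $H$ this weight can be super-polynomial in $n$, one must show it does not distort matching counts on the $o(1)$-probability event above. This requires a joint analysis of $(M(H),t(H))$ under $H_r(n,\pi)$, combined with a matching-count estimate for $H_r(n,m)$ at $e^{o(n/\log n)}$ precision (stronger than Kahn's classical $(1\pm o(1))^n$).
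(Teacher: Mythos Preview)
Your lower-bound route is the same as the paper's (Corollaries~\ref{cor:chm} and~\ref{cor:factorl}): couple $H_r(n,\pi')\subset H_r(G(n,p))$ via~\cite{R-copies,Heckel-copies} and invoke the $e^{o(n/\log n)}$-precise matching count for $H_r(n,m)$ (Theorem~\ref{thhm}, a slight refinement of Kahn's argument).

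The upper bound, however, has a genuine gap, and it is precisely the one you flag. The change-of-measure you propose does not go through: on $\cG$ the weight $p^{-t(H)}$ is not bounded by anything close to $e^{o(n/\log n)}$. The only control on $t(H)$ coming from $H\in\cG$ is $t(H)=O(\omega\Delta_2)$, and near threshold $\Delta_2$ is polynomial in $n$ (for $r=3$ it is $n^{2/3+o(1)}$), so $p^{-t(H)}$ can be as large as $\exp(n^{2/3+o(1)})$. Hence an event of probability $o(1)$ under $H_r(n,\pi)$ need not have small probability after tilting by $p^{-t(H)}$, and there is no a priori joint control of $(M(H),t(H))$ available to rescue the argument. (A minor side point: the error in Theorem~\ref{thm:Kr:intro} is $O(\omega\mu_{r+1})$, which is $O^*(1)$ here, not $o(1)$; this is harmless for the target precision but worth stating correctly.)

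The paper sidesteps any change of measure. Rather than transferring a probability bound, it bounds $\E\big[F_r(G(n,p))\,\ind{\cW_m}\big]$ directly (Theorem~\ref{thm:factorp} via Lemma~\ref{lmoment}): write the expectation as $\sum_F\sum_{H}\Pr\big(H_r(G(n,p))=H\big)$ over $K_r$-factors $F$ of $K_n$ and well-behaved $H\supset F$ with $e(H)=m$, apply Theorem~\ref{thm:Kr} pointwise, and then bound $\sum_H p^{-t(H)}$ by generating $H$ as $F$ together with $m-n/r$ i.i.d.\ uniform hyperedges and computing $\E\big[p^{-t(\tH_m)}\big]$ by a one-step recursion. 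Each new hyperedge multiplies this expectation by an explicit factor $1+c(\ell)$; the product telescopes to $\exp\big((1-\binom{k}{2}\binom{m}{2}^{-1})(\Delta_2-\Delta_2^0)+O^*(\xi)\big)$, which cancels the $e^{-(\Delta_2-\Delta_2^0)}$ from Theorem~\ref{thm:Kr}. Summing over $F$ then gives $\E\big[F_r\,\ind{\cW_m}\big]\le e^{O^*(\xi)}\,\Sigma(n,m)\,\Pr\big(\Bin(N,\pi)=m\big)$, and Markov on $\cW$ finishes. Note that this uses no upper-tail information about $M(H_r(n,m))$ beyond Markov---which is all that is available there anyway---and requires no joint analysis of $M(H)$ and $t(H)$. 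That direct computation of $\sum_H p^{-t(H)}$ is the missing idea in your proposal.
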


The lower bound in~\eqref{Fr1} follows from the method of~\cite{Kahn-asymp} via the couplings from~\cite{Heckel-copies,R-copies} (see Corollaries~\ref{cor:chm} and~\ref{cor:factorl}); we will use Theorem~\ref{thm:Kr:intro} to prove (a stronger form of) the upper bound (see Theorem~\ref{thm:factorp}). It is perhaps helpful to note that the expectation of $F_r(G(n,p))$ is larger than $\Sigma(n,m)$ by a factor of $e^{\Theta(n/\log n)}$ when $p = \Theta(q_r(n))$ (see~\eqref{ratio} and~\eqref{EFr}). In particular, this means that, by Theorem~\ref{thm:factorpweak}, the number of $K_r$-factors in $G(n,p)$ is typically much smaller than its expectation when $p$ is close to the threshold. 


To put the error term in \eqref{Fr1} into context, we make the following conjecture. 

\begin{conjecture}\label{conjFr}
Let $r \ge 3$ be constant and let $p \le n^{-2/r + o(1)}$. Then
\begin{equation}\label{cFr}
 F_r(G(n,p)) = e^{-\Theta(n^3/m^2)} \, \Sigma(n,m) \cdot \1\big[ F_r(G(n,p)) > 0 \big]
\end{equation}
with high probability, where $m = \big\lfloor \binom{n}{r}p^{\binom{r}{2}} \big\rfloor$.
\end{conjecture}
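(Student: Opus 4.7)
The plan is to combine the fine distributional description of the clique hypergraph in Theorem~\ref{thm:Kr:intro} with a sharpened analysis of perfect matchings in $H_r(G(n,p))$, treating the upper and lower bounds of~\eqref{cFr} separately. For the upper bound I would seek a refined version of Kahn's entropy upper bound for $F_r(H)$: Kahn's basic bound asserts $\log F_r(H) \le \phi(H)$ for an explicit functional $\phi$ of the local degree sequence, and $\phi \approx \log \Sigma(n,m)$ once $H$ is approximately regular. The key step is to strengthen this to
\[
 \log F_r(H) \le \log \Sigma(n,m) - c\, D(H),
\]
where $D(H)$ is an explicit defect statistic (e.g.\ a $\chi^2$-type quantity measuring the fluctuations of vertex degrees and pair-codegrees of $H$ about their means), and then to use Theorem~\ref{thm:Kr:intro} to show that $D(H) \ge c' n^3/m^2$ with high probability when $H = H_r(G(n,p))$. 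The point is that the explicit formula in Theorem~\ref{thm:Kr:intro} allows one to compute moment generating functions of local statistics of $H$ precisely, and hence to concentrate $D(H)$ at the required scale.

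For the lower bound, which is non-trivial only on the event $\{F_r(G(n,p)) > 0\}$, I would use an absorption-style switching argument. Starting from a single $K_r$-factor $M_0$, one builds many additional factors by swapping bounded-size subsets of its edges with alternative cliques from $H_r(G(n,p))$. With high probability $G(n,p)$ contains the expected number of $K_r$'s through every pair of vertices, which provides the local flexibility for these swaps, and a counting of distinct outcomes via a hypergraph switching inequality should yield $\ge e^{-Cn^3/m^2}\Sigma(n,m)$ distinct factors.

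The principal obstacle is the strengthened entropy upper bound. At $p = \Theta(q_r(n))$ the target scale $n^3/m^2$ is $\Theta(n/\log^2 n)$, strictly finer than the $o(n/\log n)$ error coming from Theorem~\ref{thm:factorpweak} and well below the resolution of any straightforward second-moment argument (since $\Var(F_r)/\E[F_r]^2$ is typically much larger than $e^{\Theta(n^3/m^2)}$). One must therefore identify the exact local defect statistic responsible for the leading-order correction and establish its concentration at scale $n^3/m^2$ under the distribution of $H_r(G(n,p))$; pinning down this statistic, and matching it to the corresponding entropy deficit on the combinatorial side, is, I believe, the crux of the conjecture.
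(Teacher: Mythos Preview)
The statement you are attempting to prove is \emph{Conjecture}~\ref{conjFr}, and the paper does not prove it: it is explicitly left open, with only the heuristic remark that the correction term ``comes from the variation in the number of intersections between copies of $K_r$ in $G(n,p)$''. There is therefore no proof in the paper to compare your proposal against.

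What you have written is not a proof but a research programme, and you say as much yourself when you identify the ``principal obstacle'' as pinning down the exact defect statistic and matching it to an entropy deficit. That is indeed the crux, and it is genuinely open. A couple of remarks on your outline. First, the paper's hint suggests that the relevant fluctuation is in the number of \emph{edge-intersecting pairs of cliques} (i.e.\ $t_2(H)$ and its relatives), rather than vertex degrees or pair-codegrees of $H$ per se; your defect statistic $D(H)$ should probably be built around this quantity. Second, for the upper bound, Kahn's entropy method gives the right leading term but with slack $o(n/\log n)$ (this is what underlies Theorem~\ref{thm:factorpweak}); reaching $\Theta(n/(\log n)^2)$ would require a genuinely new idea, not just a sharpening of existing bounds. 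Third, your lower-bound sketch via switchings from a single factor $M_0$ is unlikely to produce anything close to $\Sigma(n,m)$ distinct factors: local swaps generate at most $e^{O(n)}$ factors, whereas $\Sigma(n,m)$ is of order $(m/n)^{n/r}$, which is much larger; a lower bound at this precision would presumably need to go through a sharpened version of Kahn's martingale analysis (as in Section~\ref{ssK+}) transferred via the coupling, not a switching argument.
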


Note that when $p = \Theta(q_r(n))$, the correction term in the conjecture is of the form $\exp\bb{-\Theta\big( n / (\log  n)^2 \big)}$; this term comes from the variation in the number of intersections between copies of $K_r$ in $G(n,p)$. We expect the conjecture to hold up to roughly $m = n^{4/3}$, at which point the variation in the number of copies of $K_r$ has a larger effect. 

We also expect a corresponding hitting time statement to hold. From the results of Kahn~\cite{Kahn-hitting} and Heckel, Kaufmann, M\"uller and Pasch~\cite{HKMP-hitting}, we know that with high probability the random graph process (adding edges randomly one by one) contains a $K_r$-factor as soon as every vertex is in a copy of $K_r$, and we conjecture that at this point the number of such $K_r$-factors is given by \eqref{cFr}. We also make the corresponding conjectures for Shamir's problem (perfect matchings in a random hypergraph); in that setting, a lower bound with a (larger) $\exp(o(n))$ error term is given in~\cite[Theorem 1.4]{Kahn-hitting}.

\subsection{Almost maximal cliques for constant $p$}\label{sscl2}


Our main theorems can also be applied to much larger cliques in denser random graphs; in fact, our original motivation for the results proved in this paper was to study the chromatic number of $G(n,1/2)$, where one needs to control the distribution of the independent sets of size close to $\alpha(G)$ (see, e.g.,~\cite{Heckel,HP,HR}). With that setting in mind, let us assume in this subsection that $p = p(n)$ is bounded away from zero and one, and that $p$ and $r = r(n) \sim 2\log_{1/p}(n)$ are chosen so that
\begin{equation}\label{mubeta}
 \mu_r = \Theta(n^{1+\theta}),
\end{equation}
where $\theta$ is a constant\footnote{More precisely, we will only consider those $n \in \N$ for which~\eqref{mubeta} holds, a set that we assume to be infinite. Let us also note that we don't actually need $\theta$ to be constant, but can allow it to vary in a compact interval.} and $\mu_r$ (as before) denotes the expected number of copies of $K_r$ in $G(n,p)$. For technical reasons, we will need to assume that $-1 < \theta < 1/2$; we are most interested in the case $\theta > 0$. Under this assumption $H_r(G(n,p))$ is unlikely to contain any pairs of copies of $K_r$ intersecting in $s$ vertices for any $3 \le s \le r - 2$, and will typically contain $\Theta^*(n^\theta)$ pairs that intersect in exactly $r - 1$ vertices.\footnote{Here, and throughout the paper, we write $f = O^*(g)$ to mean that there is a constant $C$ such that $f(n) = O\big( g(n) (\log n)^C \big)$, and $f = \Theta^*(g)$ if $f = O^*(g)$ and $g = O^*(f)$.} As a consequence, in this context we expect to be able to replace the correction term $\Lambda(n,r)$ in Theorem~\ref{thm:Kr:intro} by the simpler expression 
\begin{equation}\label{def:Lambda:prime}
\Lambda'(n,r) = \frac{1}{2} \binom{n}{r}\binom{r}{2}\binom{n-r}{r-2} p^{2\binom{r}{2}} \Big( p^{-1} - 1 \Big),
\end{equation}
which is just the term corresponding to $s = 2$ in~\eqref{def:Delta}, and indeed we obtain the following variant of Theorem~\ref{thm:Kr:intro} for constant $p$.

\begin{theorem}\label{thgen:intro}
Let $\eps > 0$ be constant, let $-1 + \eps \le \theta = \theta(n) \le 1/2 - \eps$, and suppose that $p = p(n) \in (\eps,1-\eps)$ and $r = r(n) \sim 2\log_{1/p}(n)$ are chosen so that $\mu_r = \Theta(n^{1+\theta})$. Then there exists a set $\cG$ of $r$-uniform hypergraphs with $\Pr\big( H_r(G(n,p)) \in \cG \big) \to 1$ as $n \to \infty$, such that  
$$\Pr\big( H_r(G(n,p)) = H \big) = \pi^{e(H)} (1-\pi)^{{n \choose r} - e(H)} p^{-t(H)} e^{-\Lambda'(n,r) + O^*(n^\theta) + n^{-\Omega(1)}}$$
for every $H \in \cG$. 
\end{theorem}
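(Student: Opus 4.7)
The plan is to derive Theorem~\ref{thgen:intro} from the master theorems (Theorems~\ref{thmain} and~\ref{th2way}) in the same way as Theorem~\ref{thm:Kr:intro} will be derived in Section~\ref{secKr}. Relative to Theorem~\ref{thm:Kr:intro}, the new features are that $r = r(n) \to \infty$ (with $p$ constant), and that the correction $\Lambda(n,r)$ may be replaced by the simpler $\Lambda'(n,r)$ (which is only the $s=2$ summand of $\Lambda$) at the cost of an additive $O^*(n^\theta)$ error. Thus the proof has two ingredients: a direct calculation showing that $\Lambda(n,r) - \Lambda'(n,r) = O^*(n^\theta)$, and a verification that the hypotheses of the master theorems on larger `clusters' of overlapping copies of $K_r$ hold in this regime.

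\textbf{Simplifying $\Lambda$.} Set $\ell = \log_{1/p} n$. Combining the hypothesis $\mu_r = \Theta(n^{1+\theta})$ with the two identities $\log_n \binom{n}{r} = 1 + \theta + \binom{r}{2}/\ell + o(1)$ and $\log_n \binom{n}{r} = r + O^*(1)$ forces $r = 2\ell - \theta + O^*(1)$. A short calculation then yields
\[
 \log_n\bigl(\binom{n}{r}\binom{r}{s}\binom{n-r}{r-s}p^{2\binom{r}{2}-\binom{s}{2}}\bigr) = 2 + 2\theta - s + \frac{s(s-1)}{2\ell} + O^*(1),
\]
which is strictly convex in $s$ (second derivative $1/\ell>0$), so its maximum over $\{2,\ldots,r-1\}$ is attained at an endpoint. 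Direct evaluation gives $2\theta + O^*(1)$ at $s = 2$ (matching the order of $\Lambda'(n,r)$ when $\theta > 0$) and $\theta + O^*(1)$ at $s = r-1$. Since $\theta \le 1/2 - \eps$, the summands with $3 \le s \le r-1$ contribute at most $O^*(n^\theta)$ in total, proving $\Lambda(n,r) = \Lambda'(n,r) + O^*(n^\theta)$.

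\textbf{Cluster estimates and conclusion.} The master theorems additionally require control of expected counts of $k$-clusters of copies of $K_r$ (that is, $k$-tuples of copies with connected overlap graph) for $k \ge 3$. A spanning-tree argument bounds any such count by $\mu_r$ times $k-1$ factors of the form $E_s$, the expected number of $K_r$'s intersecting a fixed $K_r$ in exactly $s$ vertices. From the convex estimate above, $E_s = O^*(n^{\theta-1} + n^{-1})$ for every $s \in \{2,\ldots,r-1\}$. Hence any connected $k$-cluster has expected count at most $O^*\bigl(n^{1+\theta+(k-1)(\max(\theta,0)-1)}\bigr)$, which is $O^*(n^\theta)$ (indeed smaller) for $k \ge 3$ precisely when $\theta < 1/2$. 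Feeding these estimates into Theorems~\ref{thmain}/\ref{th2way}, and defining $\cG$ to consist of those hypergraphs whose edge count and pair-overlap statistics match expectation within $O^*(n^\theta)$ (for which standard concentration arguments give $\Pr(H_r(G(n,p)) \in \cG) \to 1$), yields the claimed formula with error $O^*(n^\theta) + n^{-\Omega(1)}$.

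\textbf{Main obstacle.} The main technical hurdle is the uniform cluster-count verification, since the cluster shapes required by the master theorems are richer than the simple chains sketched above (they include denser overlap graphs and mixed patterns combining $2$-overlaps with $(r-1)$-overlaps). The threshold $\theta < 1/2$ is essentially tight for this approach: at $\theta = 1/2$ the expected number of triples of $K_r$'s with pairwise $2$-overlap already becomes of order $\mu_r$, and a finer analysis well beyond the scope of our master theorems would be needed to proceed further.
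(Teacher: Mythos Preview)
Your outline for the upper bound is broadly in line with the paper's approach (via Theorem~\ref{thmain} and the ``plausible'' class in Definition~\ref{plausdef}), and your reduction $\Lambda(n,r)=\Lambda'(n,r)+O^*(n^\theta)$ is correct. However, there are two genuine gaps.

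First, your spanning-tree bound on $\Delta_k$ is invalid as an \emph{upper} bound. The product $\mu_r\prod E_s$ over a spanning tree of the overlap graph is a \emph{lower} bound on the expected cluster count, because later cliques may share edges with several earlier cliques, not just their tree-parent. The decisive example is three copies of $K_r$ inside a single $K_{r+1}$: the third copy is contained in the union of the first two, so its conditional probability is $1$, and such $3$-clusters contribute $\Theta^*(\mu_{r+1})=\Theta^*(n^{\theta})$ to $\Delta_3$, strictly larger than your claimed $O^*(n^{3\max(\theta,0)-1})$. The paper (Lemma~\ref{G34normal}) proceeds instead by adding cliques one by one and bounding the number of new edges at each step; this requires isolating ``strange'' intersections of intermediate size (since $\binom{r}{t}$ can be as large as $n^{O(1)}$ when $r\sim 2\log_{1/p} n$), a subtlety your sketch does not address. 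Fortunately, since the correct bound is still $\Delta_3,\Delta_4=O^*(n^\theta)+n^{-\Omega(1)}$, the upper bound survives once the argument is repaired.

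Second, and more seriously, you do not address the extra $C(Y)$ term that Theorem~\ref{th2way} introduces for the lower bound. The only general tool available, Lemma~\ref{CYbd1}, gives $\Ex[C(I)]=O(\Delta_3+\cdots+\Delta_{s+1})$ with $s=\binom{r}{2}=\Theta((\log n)^2)$, and here $\Delta_k$ for large $k$ is \emph{huge}: for instance, all $\binom{r}{2}$-subsets of $K_r$'s inside a single $K_{2r}$ form clusters, and there are $2^{\Theta(r)}=n^{\Theta(1)}$ of them per copy of $K_{2r}$. Your $k$-cluster bound (even a correct version) therefore says nothing about $C(Y)$. The paper resolves this by observing that such dense configurations do not actually occur in $G(n,p)$: it defines an explicit family of \emph{legal} clusters (Definition~\ref{defL}), shows $G(n,p)$ is legal whp (Lemma~\ref{Llegal}), and then bounds only the \emph{legal} star-clusters contributing to $C(Y)$ (Lemma~\ref{lEX'}). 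This is the key idea your proposal is missing; without it, the matching lower bound does not follow from Theorem~\ref{th2way}.
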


We will prove the upper bound in Section~\ref{ssGnp} for a simple and natural explicit family of graphs $\cG$ (see Definition~\ref{plausdef} and Theorem~\ref{thgen}), and the lower bound (which seems less important for applications) in Section~\ref{sslbg} for a different (and somewhat less natural) family $\cG$ (see Definition~\ref{defHgood} and Theorem~\ref{th2way2}).

\subsection{Relationship to previous work}

In~\cite{R-copies}, the second author studied, as here, the hypergraph $H_r(G(n,p))$ formed by the copies of $K_r$ in $G(n,p)$, for $r \ge 4$ and $p \le n^{-2/r+\gamma}$ for some $\gamma = \gamma(r) > 0$. The main result there (extended to $r=3$ by Annika Heckel in~\cite{Heckel-copies}) is that for some $\pi'\sim \pi=p^{\binom{r}{2}}$, there is a coupling of $H_r(G(n,p))$ and $H_r(n,\pi')$ such that 
$$H_r(n,\pi') \subset H_r(G(n,p))$$
with high probability; as noted in~\cite{R-copies}, there is no similar coupling in the other direction, precisely due to fact that $H_r(G(n,p))$ typically contains many more pairs of intersecting hyperedges than $H_r(n,\pi)$ does. 

To compare the two results: the coupling result of~\cite{R-copies} is useful for showing that certain structures exist in $G(n,p)$ (see, e.g.,~\cite{AKR,HKMP-hitting,JLS,KMP,PSSS}), and it is also relatively easy to apply, since it gives a direct comparison to a simple distribution. However, it is only `one-way', and loses something in that $\pi'$ is not equal to $\pi$. In contrast, Theorem~\ref{thm:Kr:intro} is two-way, and in a sense sharper, giving fairly precise bounds on the actual probability of all outcomes within some `typical' set. On the other hand, while in some sense it answers the distribution question for $H_r(G(n,p))$ almost completely, it is not at first clear how to apply it; we will illustrate one method in Section~\ref{secappl} (see in particular Lemma~\ref{lmoment}), which we expect to have further applications. The earlier coupling result definitely does not imply Theorem~\ref{thm:Kr:intro}; the reverse implication could perhaps hold, but seems hard to prove -- it is often difficult to go from probabilities of individual outcomes to coupling results. In summary, we believe that the two results are complementary.

Mousset, Noever, Panagiotou and Samotij~\cite{MNPS-non} proved results that are loosely related to those given here. They consider, for example, the probability that $G(n,p)$ contains no copies of some given graph $F$ (their actual context is much more general, as ours will be). For $F=K_r$, this is asking for the probability that $H_r(G(n,p))$ has no edges. They manage to give a very precise answer to this question in many cases. Here, in contrast, we consider not one possible outcome, but the entire set of `typical' outcomes. This complicates the situation considerably, and correspondingly our results are less precise.
We will briefly discuss the connection between the proof techniques used here and in~\cite{MNPS-non} once we have outlined our method (see Remark~\ref{rem_mnps}).

The rest of this paper is organized as follows: in Section~\ref{ssu} we describe the abstract set-up for our general results, and the basic method of proof. In Section~\ref{sec_wu}, we prove the simple `warm-up' result alluded to earlier. Then in Section~\ref{secmain} we prove the main part of our main result, Theorem~\ref{thmain}, giving an upper bound on the probability of suitable outcomes. In Section~\ref{seclower} we prove a corresponding lower bound with additional conditions. In Section~\ref{secKr} we consider the specific case of copies of $K_r$, in particular deducing Theorems~\ref{thm:Kr:intro} and~\ref{thgen:intro}. Finally, in Section~\ref{secappl} we turn to applications, proving Theorem~\ref{thm:factorpweak} via a general lemma that may be useful for applying Theorem~\ref{thm:Kr:intro} in other contexts.

\section{Set-up and basic approach}\label{ssu}

Almost all of our arguments can be phrased in a rather abstract setting, which we will do. For simplicity, as in the previous section, the reader may well wish to consider the case of copies of a given graph $H=H_n$ in $G(n,p)$, and more specifically the case $H=K_r$, where (for asymptotics) $r$ and $p$ are functions of $n$; we call this the `standard setting', and will often illustrate definitions by saying what they correspond to in this case.

\medskip\noindent
{\bf Set-up:} Let $\cH$ be an $s$-uniform hypergraph\footnote{In our standard setting $X = E(K_n)$ and $s = e(H)$, and the edges of $\cH$ correspond to the edge-sets of the copies of $H$ in $K_n$.} with vertex set $X$ and edge set $E(\cH) = \{ E_1,\ldots,E_N \}$. Write $X_p$ for the random subset of $X$ obtained by selecting each element independently with probability $p$, let $A_i$ be the event that $E_i \subset X_p$, and set $\pi = p^s$ and $\mu = \pi N$. We will assume that $1 - \pi = \Omega(1)$, and moreover that $\cH$ is \emph{symmetric}, in the sense that for each $i$ and $j$ there is an automorphism of $\cH$ that maps $E_i$ to $E_j$. 


Throughout we consider the random set 
$$I = \big\{ i \in [N] : A_i \text{ holds} \big\}.$$
Our aim is to compare the distribution of $I$ to that of the random subset $[N]_\pi$ of $[N]$. To do so, given a set $Y \subset [N]$ (which we think of as corresponding to `yes' outcomes), define 
$$R(Y) = \bigcup_{j \in Y} E_j$$
and for each $j \notin Y$, let $A_j' = A_j'(Y)$ denote the event that 
$$E_j \setminus R(Y) \subset X_p\setminus R(Y).$$
(In~\cite{R-copies}, the set $R(Y)$ was thought of as the set of  `revealed' elements.) 
Note that $I = Y$ if and only if $R(Y) \subset X_p$ and $A_j'(Y)^\cc$ holds for every $j \notin Y$, that is,
\[
 \big\{ I = Y \big\} = \big\{ R(Y) \subset X_p \big\} \cap \bigcap_{j \notin Y} A_j'(Y)^\cc.
\]
We may of course think of the $A_j'$ as events in the probability space corresponding to choosing a random subset of $X \setminus R(Y)$. Then
\[
 \Pr(I = Y) = p^{|R(Y)|} \cdot \Pr\Bb{ \bigcap_{j \notin Y} A_j'(Y)^\cc }.
\]
We bound the probability of the intersection by choosing an order $\prec$ on the index set $Y^\cc = [N] \setminus Y$, and setting
\[
 \pi_j = \Pr\Bb{ A_j'(Y) \Bigmid\; \bigcap_{i\prec j} A_i'(Y)^\cc },
\]
so that
\begin{equation}\label{PrE}
 \Pr(I=Y) = p^{|R(Y)|} \prod_{j \in Y^\cc} (1-\pi_j).
\end{equation}
Note that our order $\prec$ can (and will) depend on $Y$.

When evaluating conditional probabilities such as $\pi_j$, we use a fairly standard approach, similar to the Warnke version~\cite{RW-Janson} of the proof of the Janson inequalities~\cite{Janson}.
Let $U$ and $\{ U_i : i \in \cS \}$ be up-sets. Partition $\cS$ into $\cS_0 \cup \cS_1$ so that for all $i \in \cS_0$ the events $U$ and $U_i$ are independent.\footnote{Of course we can just take $\cS_0$ to be precisely these $i$, but it may be convenient to shrink $\cS_0$ and enlarge $\cS_1$. The argument only requires independence of certain events, never dependence.}
For $t \in \{0,1\}$, let 
$$D_t = \bigcap_{i \in \cS_t} U_i^\cc,$$ 
so $D_0$ and $D_1$ are down-sets, and $U$ is independent of $D_0$. The key formula is
\begin{equation}\label{mainbd0}
 \Pr\bigg( U \Bigmid \bigcap_{i \in \cS} U_i^\cc \bigg) = \Pr\big( U \mid D_0 \cap D_1 \big) =
 \frac{\Pr(U \cap D_1 \mid D_0)}{\Pr(D_1\mid D_0)} = \frac{\Pr(U)-\eps}{1-\delta},
\end{equation}
where 
\[
 \eps = \Pr\Bb{\bigcup_{i \in \cS_1} U \cap U_i \Bigmid D_0} \qquad \text{and} \qquad
 \delta = \Pr\Bb{\bigcup_{i \in \cS_1} U_i \Bigmid D_0}.
\]
Sometimes we use the simpler bound
\begin{equation}\label{simple0}
 \Pr(U) \ge \, \Pr\big( U \mid D_0 \cap D_1) \,\ge\, \Pr(U) - \eps \,\ge\, \Pr(U) - \sum_{i \in \cS_1} \Pr\big( U \cap U_i \big),
\end{equation}
where the last inequality holds by Harris' Lemma and the union bound. We mostly (but not always) apply these bounds directly to the events $A_i'(Y)$.  That is, when bounding $\pi_j$ for a fixed $j \in [N] \setminus Y$, we let $U =  A_j'(Y)$ and $\{ U_i : i \in \cS \} = \{ A_i'(Y) : i \prec j \}$. In this setting, we will find it convenient to define $i \sim j$ to mean that $i\ne j$ and $E_i \cap E_j \ne\emptyset$,\footnote{We could define $i \sim j$ if $E_i \setminus R(Y)$ and $E_j \setminus R(Y)$ intersect, but this doesn't seem to change things much, and we would then need separate notation elsewhere in the proof when considering whether $E_i$ and $E_j$ intersect for $j \in Y$.} 
and to write 
$$i \to j \quad \text{if} \quad i \prec j \,\text{ and }\, i \sim j \qquad \text{and} \qquad  i \not\to j \quad \text{if} \quad i \prec j \,\text{ and }\, i \not\sim j.\footnote{Here we are abusing notation slightly, since $\not\to$ is not the negation of $\to$.}$$
In particular, with this notation the quantities $\eps$ and $\delta$ above become 
$$\eps_j = \Pr\bigg( \bigcup_{i\to j} A_i'(Y) \cap A_j'(Y) \Bigmid D_0(j) \bigg) \quad \text{and} \quad \delta_j = \Pr\bigg( \bigcup_{i\to j} A_i'(Y) \Bigmid D_0(j) \bigg),$$
where
\begin{equation}\label{def:Delta0j}
D_0(j) = \bigcap_{i \not\to j} A_i'(Y)^\cc,
\end{equation}
and it follows from~\eqref{mainbd0} that
\begin{equation}\label{mainbd}
 \pi_j = \frac{\Pr(A_j'(Y)) - \eps_j}{1 - \delta_j}.
\end{equation}
Moreover, if we define
\begin{equation}\label{def:epshat:deltahat}
\heps_j = \sum_{i \to j}\, \Pr\big( A_i'(Y) \cap A_j'(Y) \big) \qquad \text{and} \qquad  \hdelta_j = \sum_{i \to j}\, \Pr\big( A_i'(Y) \big),
\end{equation}
then, by the union bound and Harris' Lemma, we have $\eps_j \le \heps_j$ and $\delta_j \le \hdelta_j$. In this setting,~\eqref{simple0} gives
\begin{equation}\label{simple}
 \Pr\big( A_j'(Y) \big) \ge \pi_j \ge \Pr\big( A_j'(Y) \big) - \eps_j \ge \Pr\big( A_j'(Y) \big) - \heps_j.
\end{equation}
Note also that 
\begin{equation}\label{eq:hateps:pihatdelta}
\heps_j \ge \pi\hdelta_j,
\end{equation}
by Harris's Lemma and since $\Pr\big( A_j'(Y) \big) \ge \pi$.  

\begin{remark}\label{rem_mnps}
Using~\eqref{mainbd}, bounds on $\eps_j$ and $\delta_j$ translate to bounds on $\pi_j$. In turn, we can improve on the simple upper bounds $\eps_j \le \heps_j$ and $\delta_j \le \hdelta_j$ by applying inclusion--exclusion to deal with the union, and~\eqref{mainbd0} to deal with conditioning. This process can be repeated to higher and higher order, eventually producing bounds where the error terms involve `clusters' of events $A_i'(Y)$ of arbitrarily large size. (Here a cluster is a set of events inducing a connected subgraph of the dependency digraph.)
Remarkably, Mousset, Noever, Panagiotou and Samotij~\cite{MNPS-non} managed to systematize something like this, giving an expansion for the probability that no member of a (suitable) family of events holds involving `cumulants' up to any given order, and bounds on the error. 

In principle we could try to apply their result to our collection of events $A_j'(Y)$.
However this does not seem feasible: the collection is rather hard to handle, with its properties depending on the set $Y$. Moreover, in our context, any event $A_j'(Y)$ that is affected by some $A_i$ with $i \in Y$ is already effectively a cluster of size $2$. Our plan here is essentially to expand the probability that none of the $A_j'(Y)$ holds to a formula including contributions from clusters of the original events $A_i$ of size 2, with error terms involving clusters of size 3. This involves treating different $A_j'(Y)$ differently. It is this grouping and the calculation that constitute the work here; the way we apply \eqref{mainbd} is relatively simple, only going one step further using \eqref{simple0} to bound $\eps_j$, and only in some cases. Our method and results therefore seem almost orthogonal to those of~\cite{MNPS-non}.
\end{remark}

\section{The warm up}\label{sec_wu}

In this section we will warm ourselves up for the main theorems by proving a weaker result, which will allow us to present some the key ideas without too many distracting technical complications. To state this result, assume that we are in the setting described in the previous section, recall that $\mu = \pi N$ denotes the expected number of events $A_i$ that hold, and set
\begin{equation}\label{Delta2def}
 \Delta_2 = \frac{1}{2} \sum_{i \sim j} \Pr\big( A_i \cap A_j \big),
\end{equation}
where the sum runs over all ordered pairs of distinct indices in $[N]$ with $i \sim j$. Thus, in the standard setting where the sets $E_i$ correspond to copies of a graph $H$, $\Delta_2$ is exactly the expected number of pairs of (edge-)overlapping copies. Recalling that $\cH$ is $s$-uniform, for any $Y \subset [N]$, let
\begin{equation}\label{tYdef}
 t(Y) = s|Y| - |R(Y)|
\end{equation}
be the number of repeated elements (edges in the standard setting) in the union $\bigcup_{i \in Y} E_i$, with an element (edge) appearing in $k$ sets $E_i$ counted $k - 1$ times.

We will prove the following weaker version of our main theorem. 

\begin{theorem}\label{th_wu}
For every $Y\subset [N]$ with $|Y| = O(\mu)$, we have
\begin{equation}\label{wu}
 \Pr( I = Y ) \le \pi^{|Y|} (1-\pi)^{N-|Y|} p^{-t(Y)} e^{O(\Delta_2)}.
\end{equation}
Moreover, if $\Delta_2 \to 0$ then
there is a set $\cT\subset \cP([N])$ of `typical' outcomes such that $\Pr(I\in\cT)\to 1$, and 
\begin{equation}\label{wu:hoo}
\Pr(I = Y) \sim \pi^{|Y|} (1 - \pi)^{N - |Y|}
\end{equation}
for each $Y\in \cT$. Equivalently, the random sets $I$ and $[N]_{\pi}$ can be coupled to agree with probability $1-o(1)$.
\end{theorem}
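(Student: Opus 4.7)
Starting from the exact formula \eqref{PrE}, I would rewrite
\[
 \Pr(I = Y) = \pi^{|Y|} p^{-t(Y)} \prod_{j \in Y^\cc}(1 - \pi_j),
\]
using the identity $|R(Y)| = s|Y| - t(Y)$ from \eqref{tYdef}. The task then reduces to showing
\[
 \prod_{j \in Y^\cc}(1 - \pi_j) \le (1-\pi)^{N-|Y|} e^{O(\Delta_2)}.
\]
I would use the simple lower bound \eqref{simple}, $\pi_j \ge \Pr(A_j'(Y)) - \heps_j$, combined with a partition of $Y^\cc$ into \emph{free} indices $F = \{j \in Y^\cc : E_j \cap R(Y) = \emptyset\}$ and \emph{touched} indices $T = Y^\cc \setminus F$. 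For $j \in F$ one has $\Pr(A_j'(Y)) = \pi$, so $1-\pi_j \le (1-\pi)(1+O(\heps_j))$ (using $1-\pi = \Omega(1)$). For $j \in T$ one simply uses $1-\pi_j \le 1$, paying a factor $(1-\pi)^{-|T|} \le \exp(O(|T|\pi))$. Combining these gives
\[
 \prod_{j \in Y^\cc}(1-\pi_j) \le (1-\pi)^{N-|Y|} \exp\Big( O\big( |T|\pi + \textstyle\sum_j \heps_j \big) \Big),
\]
reducing the first claim to the estimate $|T|\pi + \sum_j \heps_j = O(\Delta_2)$.

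For $|T|\pi$: by symmetry of $\cH$ every index has the same $\sim$-degree $D$, with $ND = 2 \,|\{\text{overlapping pairs}\}|$; since each overlapping pair contributes at least $\pi^2/p$ to $\Delta_2$, we obtain $D = O(p \Delta_2 /(\mu \pi))$, and the hypothesis $|Y| = O(\mu)$ gives $|T|\pi \le |Y|D\pi = O(p \Delta_2) = O(\Delta_2)$. For $\sum_j \heps_j$, write it in the symmetric form $\sum_{\{i,j\} \subset Y^\cc,\, i \sim j} \Pr(A_i'(Y) \cap A_j'(Y))$. Pairs with $i,j \in F$ contribute exactly $\Pr(A_i \cap A_j)$ and sum to at most $2\Delta_2$. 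Pairs with at least one endpoint in $T$ are controlled by attaching a witness $k \in Y$ with $k \sim$ (touched endpoint) and double-counting triples $(i,j,k)$, again producing an $O(\Delta_2)$ contribution via $|Y| = O(\mu)$.

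For the second claim, when $\Delta_2 \to 0$, define $\cT$ to consist of those $Y \subset [N]$ with $|Y| = O(\mu)$, $t(Y) = 0$, and $|T(Y)|\pi + \sum_j \heps_j(Y) = o(1)$. Standard first-moment estimates using $\Delta_2 \to 0$ show that $\Pr(I \in \cT) \to 1$, and the upper bound just proved gives $\Pr(I=Y) \le (1+o(1))\pi^{|Y|}(1-\pi)^{N-|Y|}$ for $Y \in \cT$. A matching lower bound comes from the sharper two-sided form of \eqref{mainbd}, $\pi_j \le \Pr(A_j'(Y))/(1-\delta_j)$, combined with Harris's inequality applied to the decreasing events $A_j'(Y)^\cc$ in the product space $\{0,1\}^{X \setminus R(Y)}$ and a careful treatment of the touched indices, yielding $\prod_j(1-\pi_j) \ge (1-o(1))(1-\pi)^{N-|Y|}$ on $\cT$. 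The coupling statement then follows from the standard fact that $d_{\mathrm{TV}}(I,[N]_\pi) \to 0$ is equivalent to the existence of a coupling in which $I = [N]_\pi$ with probability $1-o(1)$.

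The main obstacle I anticipate is the bound $\sum_j \heps_j = O(\Delta_2)$ when touched indices are present: the ratio $\Pr(A_i'(Y) \cap A_j'(Y))/\Pr(A_i \cap A_j) = p^{-|(E_i \cup E_j) \cap R(Y)|}$ can a priori blow up far above $1$, and taming it through the witness-triple double counting, combined with the hypothesis $|Y| = O(\mu)$, is the central technical step.
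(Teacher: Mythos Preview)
Your upper-bound strategy is essentially the paper's, but you are missing the one idea that makes it clean: \emph{choose the ordering $\prec$ so that free indices come first}. With that choice, for every $j\in F$ and every $i\to j$ one automatically has $i\in F$ as well, so $\heps_j=\sum_{i\to j}\Pr(A_i\cap A_j)$ with no primes, and $\sum_{j\in F}\heps_j\le\Delta_2$ immediately. Your ``main obstacle'' --- bounding $\Pr(A_i'(Y)\cap A_j'(Y))$ for pairs with a touched endpoint --- simply does not arise. As written, you never specify $\prec$, and your witness-triple sketch for the touched pairs is not convincing: the ratio $\Pr(A_i'\cap A_j')/\Pr(A_i\cap A_j)=p^{-|(E_i\cup E_j)\cap R(Y)|}$ is not controlled by a single witness $k$, and the double-count you describe does not obviously close at $O(\Delta_2)$. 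So this is a real gap, though one fixed by a single sentence about the ordering. Your degree argument for $|T|\pi$ is fine (and in fact gives $O(p\Delta_2)$); the paper instead bounds $\pi|T|$ directly by the conditioned-cluster sum $L_2(Y)=2\Delta_2|Y|/\mu$ via \eqref{XY1}, which is slightly slicker but equivalent.

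For the second part your plan diverges more substantially. You propose to prove a matching lower bound $\prod_j(1-\pi_j)\ge(1-o(1))(1-\pi)^{N-|Y|}$ directly. The paper explicitly avoids this: it notes that a direct lower bound ``turns out to be a little awkward'' and instead observes that the upper bound alone already gives $\Pr(I=Y)\le(1+o(1))\Pr([N]_\pi=Y)$ for all $Y$ in a set of $I$-probability $1-o(1)$, which forces $d_{\mathrm{TV}}(I,[N]_\pi)\to 0$ by the elementary identity $\sum_Y(\Pr(I=Y)-q(Y))^+ = \sum_Y(q(Y)-\Pr(I=Y))^+$. The two-sided asymptotic \eqref{wu:hoo} on some high-probability $\cT$ then follows from the total variation statement, not the other way round. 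Your lower-bound route would have to control $\pi_j$ for touched $j$ (where $\Pr(A_j')$ can be as large as $p$), and your appeal to Harris plus ``careful treatment'' is not a proof; the paper's detour through total variation is both simpler and complete.
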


In the case of copies of some graph $H$ in $G(n,p)$, if we do not expect to see any pairs of copies intersecting even in single vertices, then a result such as Theorem~\ref{th_wu} is essentially trivial: the number $X$ of copies has essentially a Poisson distribution with mean $\mu$, and given that $X = k$, all outcomes consisting of $k$ vertex-disjoint copies are equally likely by symmetry. At first it might appear that this argument extends easily to the case $\Delta_2=o(1)$, but this is not obvious: it is true that for any two sets $Y_1$ and $Y_2$ of $k$ edge-disjoint copies, the events $Y_1\subset I$ and $Y_2 \subset I$ have exactly the same probability $\pi^k$. But it does not follow that $I=Y_1$ and $I=Y_2$ have the same probability: not all sets of $k$ edge-disjoint copies are isomorphic, and in general the conditional probability that no other copies are present will be different for $Y_1$ and $Y_2$.

Before starting the proof of Theorem~\ref{th_wu}, let us explain one key idea.
Given $Y\subset [N]$, i.e., a potential outcome for the random variable $I$, set
\begin{equation}\label{XYdef}
 L_2(Y) = \sum_{j \in Y} \sum_{i \sim j} \Pr\big( A_i \mid A_j \big),
\end{equation}
which is related to the extent to which the events that actually hold push up the probability of other events. Recalling that $j \in I$ if and only if $A_j$ holds, observe that 
\[
 L_2(I) = \sum_{j \in [N]} \ind{A_j} \sum_{i \sim j} \Pr\big( A_i \mid A_j \big).
\]
Since $\cH$ is symmetric (by assumption), the second sum in both formulae takes the same value ($c$, say) for every $j \in [N]$. Hence $L_2(Y)=c|Y|$ is proportional to $|Y|$. However, we also have
\begin{equation}\label{EXY}
 \Ex\big[ L_2(I) \big] = \sum_{j \in [N]} \Pr(A_j) \sum_{i \sim j} \Pr\big( A_i \mid A_j \big)
 = \sum_{i \sim j} \Pr\big( A_i \cap A_j \big) = 2\Delta_2
\end{equation}
and $\Ex[|I|] = \mu$, and therefore
\begin{equation}\label{XY1}
 L_2(Y) = \frac{2|Y|}{\mu} \cdot \Delta_2.
\end{equation}
We will use variants of this argument several times, sometimes with extra inequalities, such as the inequality $\Pr(A_i)\Pr(A_j) \le \Pr(A_i \cap A_j)$. In general, we call a sum such as $L_2(Y)$ a \emph{conditioned cluster expectation}: note that it is not the conditional expectation of the number of `clusters' (defined later), but is a sum of terms each of which can be thought of as a certain conditional expectation of a contribution to the number of clusters. We call the function $L_2$ because (i)~the clusters that are relevant here have size two, i.e., involve two of the events $A_i$, and (ii)~the dependence on $Y$ via indicator functions is linear.

\begin{proof}[Proof of Theorem~\ref{th_wu}]
We start with the first statement,~\eqref{wu}. Throughout the proof we fix the `target' outcome $Y \subset [N]$ with $|Y|=O(\mu)$, and set $m = |Y|$. Observe first that, by the definition~\eqref{tYdef} of $t(Y)$, we have
\[
 p^{|R(Y)|} = p^{s|Y| - t(Y)} = \pi^m p^{-t(Y)}. 
\]
In order to prove~\eqref{wu}, it will therefore suffice to show (see~\eqref{PrE}) that   
\[
  \prod_{j \in Y^\cc} (1 - \pi_j) \le (1 - \pi)^{N-m} e^{O(\Delta_2)}.
\]

Let us say that $j \in Y^\cc$ is \emph{neutral} if $E_j$ is disjoint from the set $R(Y)$, i.e., if there is no $i \in Y$ with $i \sim j$. Note that $A_j'(Y) = A_j$ if and only if $j$ is neutral. A key part of our strategy is to choose the ordering $\prec$ on the indices $j\in Y^\cc$ so that neutral ones come first.

Recall from~\eqref{def:epshat:deltahat} the definition of $\heps_j$, and that $\eps_j \le \heps_j$, and observe that if $j$ is neutral and $i \to j$, then $i$ is also neutral, since $i \prec j$ and by our choice of ordering. It follows that $\heps_j = \sum_{i \to j} \Pr(A_i \cap A_j)$, and hence 
\begin{equation}\label{sjne}
 \sjn \eps_j \le \sjn \heps_j\le \Delta_2.
\end{equation}
Hence, writing $N_0$ for the number of neutral $j$, it follows from~\eqref{simple} that
\[
 \pjn (1-\pi_j) \le \pjn (1-\pi+\heps_j) \le (1-\pi)^{N_0} e^{O(\Delta_2)},
\]
since $\Pr(A_j'(Y)) = \Pr(A_j) = \pi$ if $j$ is neutral, and $1 - \pi$ is bounded away from zero, by assumption. For non-neutral terms we simply use $\pi_j \ge 0$, to obtain
\begin{equation}\label{pjn0}
 \prod_{j\in Y^\cc} (1-\pi_j) \le  \pjn (1-\pi_j) \le (1-\pi)^{N_0} e^{O(\Delta_2)}.
\end{equation}
Now, by~\eqref{XYdef} and Harris' lemma we have
\begin{equation}\label{XY}
 L_2(Y) = \sum_{j \in Y} \sum_{i \sim j} \Pr\big( A_i \mid A_j \big) \ge \sum_{j \in Y}\sum_{i \sim j} \Pr(A_i) \ge \pi (N - m - N_0),
\end{equation}
where in the last step we used that fact that if $i \in Y^\cc$ is not neutral, then there exists some $j \in Y$ with $i \sim j$, so these $N - m - N_0$ terms each contribute at least $\Pr(A_i) = \pi$. Finally, recall from~\eqref{XY1} that $L_2(Y) = 2\Delta_2 |Y| / \mu = O(\Delta_2)$, since $|Y| = O(\mu)$, and hence
\begin{equation}\label{eq:need:footnote}
 (1-\pi)^{N - m - N_0} = e^{O(\Delta_2)},
\end{equation}
where we again used our assumption that $1 - \pi$ is bounded away from zero. Combining this with~\eqref{pjn0}, it follows that
\[
  \prod_{j\in Y^\cc} (1 - \pi_j) \le (1 - \pi)^{N-m} e^{O(\Delta_2)},
\]
as required.

In order to deduce the second part of the theorem, we need to show that if $\Delta_2=o(1)$, then with high probability $|I| = O(\mu)$ and $t(I) = 0$. The first of these follows\footnote{More precisely, Markov tells us that $|I| = o(\mu / \Delta_2)$ with high probability, which suffices to prove~\eqref{eq:need:footnote}, and hence~\eqref{wu}, up to a factor of $1 + o(1)$.} from Markov's inequality, since $\Ex[|I|] = \mu$, and the second holds because
$$\Pr\big( t(I) > 0 \big) \le \sum_{i \sim j} \Pr\big( A_i \cap A_j \big) = 2\Delta_2.$$
Since
$$\Pr\big( I = Y \big) \le \big( 1 + o(1) \big) \Pr\big( [N]_\pi = Y \big)$$
for every such $Y \subset [N]$, by~\eqref{wu}, it follows that~\eqref{wu:hoo} holds for every $Y$ in some family $\cT$ such that $I \in \cT$ with high probability. This in turn implies that the total variation distance between the two distributions is  $o(1)$, as claimed.
\end{proof}

We remark that under mild additional assumptions we can give an explicit lower bound on the probabilities of suitable `good' outcomes, essentially matching the upper bound in \eqref{wu}. But, surprisingly, doing so turns out to be a little awkward; we will give such a bound in a more complicated context later.

\section{Including the effect of 2-clusters}\label{secmain}

In this section we will prove our main technical theorem, which we will use (in Sections~\ref{secKr} and~\ref{secappl}) to prove the upper bounds in Theorems~\ref{thm:Kr:intro},~\ref{thm:factorpweak} and~\ref{thgen:intro}. In Section~\ref{seclower}, we will adapt the method used in this section to prove a two-way bound in a slightly more restricted setting. Throughout the next two sections, we will assume that we are in the general setting introduced in Section~\ref{ssu}. 

In order to state the main theorem of this section, we will need a few more definitions. 
First, we say that a set $S\subset [N]$ of indices forms a \emph{cluster} if the relation $i\sim j$ induces a connected graph on $S$. The \emph{size} of the cluster is $|S|$, and a \emph{$k$-cluster} is a cluster of size $k$. The cluster $S$ is \emph{present} in $Y$ if $S \subset Y$. With this terminology, $\Delta_2$ is simply the expected number of $2$-clusters present in $I$. More generally we write $W_k(Y)$ for the number of $k$-clusters that are present in $Y$, and define
\begin{equation}\label{Dkdef}
 \Delta_k = \Ex\big[ W_k(I) \big].
\end{equation}
Theorem~\ref{th_wu} studied the distribution of edges in the hypergraph $\cH[X_p]$
(in the standard setting, the distribution of copies of some graph $H$ in $G(n,p)$) in the case in which we either do not expect $2$-clusters, or do, but are willing to tolerate an error in our probability of the form $e^{O(\Delta_2)}$. Our aim in this section is to go one step further, taking full account of the effect of $2$-clusters, and giving error terms that depend on the numbers of $3$-clusters and $4$-clusters in $Y$.

In order to state the most general version of the theorem we will prove, we will need to define some further types of conditioned cluster expectation. Each of them will have the key property that we can bound their expectation in terms of $\Delta_k$ for some $k \in \N$. First, define
\begin{equation}\label{Q2def}
 Q_2(Y) = \frac{1}{2} \sum_{i \sim j} \1\big[ i, j \in Y \big],
\end{equation}
which is just the number of intersecting pairs of edges of $\cH[X_p]$ when $I = Y$. Note that  $Q_2(I) = W_2(I)$, and hence $\Ex\big[ Q_2(I) \big] = \Delta_2$. Next, let
\begin{equation}\label{Q3def}
 Q_3(Y) = \sum_{\text{3-clusters } \{i,j,k\}} \1\big[ i,j \in Y \big] \cdot \Pr\big( A_k \mid  A_i \big),
\end{equation}
and 
\begin{equation}\label{Q4def}
 Q_4(Y) = \sum_{\substack{i \sim i' \sim j' \sim j \\ i' \not\sim j, \, i \not\sim j'}} \1\big[ i,j \in Y \big] \cdot \Pr\big( A_{i'} \cap A_{j'} \mid A_i \cap A_j \big),
\end{equation}
where the sum is over 4-tuples of \emph{distinct} indices with the given intersection structure. We will show below that $\Ex\big[ Q_3(I) \big] = O(\Delta_3)$ and $\Ex\big[ Q_4(I) \big] = O(\Delta_4)$.

We will also need two further simple definitions in order to state the theorem. First, we will need the following slight modification of $\Delta_2$:
\begin{equation}\label{def:Delta20}
 \Delta_2^0 \,=\, \frac{1}{2} \, \sum_{i \sim j} \, \Pr(A_i) \cdot \Pr(A_j).
\end{equation}
Note that this corresponds to the expected number of pairs of intersecting copies \emph{if there were no dependence between copies}. In other words, $\Delta_2^0$ is just the expected number of pairs with $i \sim j$ in the random set $[N]_{\pi}$. We will also need to make a (very weak) assumption on the quantity 
\begin{equation}\label{phidef}
\phi = \max_{i \ne j} \, \Pr\big( A_i \mid A_j \big).
\end{equation}
We are now ready to state our main upper bound on $\Pr( I = Y )$. 


\begin{theorem}\label{thmain}
Suppose that $\phi \Delta_2 = O(\Delta_3)$. Then, for every $Y\subset [N]$ with $|Y| = O(\mu)$, we have
$$\Pr( I = Y ) \le \pi^{|Y|} (1-\pi)^{N - |Y|} p^{-t(Y)} \exp\bigg( - \bigg( \frac{2|Y|}{\mu} - 1 \bigg) \big( \Delta_2 - \Delta_2^0 \big) + \eta(Y) \bigg),$$
where
\[ 
\eta(Y) = O\Big( \Delta_3 + \phi \cdot Q_2(Y) + Q_3(Y) + Q_4(Y) + \pi^2 N \Big). 
\]
\end{theorem}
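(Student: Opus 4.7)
My starting point is~\eqref{PrE} combined with the identity $p^{|R(Y)|} = \pi^{|Y|}p^{-t(Y)}$, which reduces the task to an upper bound on $\prod_{j\in Y^\cc}(1-\pi_j)$. The concavity inequality $\log(1-x)\le \log(1-\pi) - (x-\pi)/(1-\pi)$ then shows that it suffices to prove a lower bound
\[
\sum_{j\in Y^\cc}(\pi_j-\pi) \,\ge\, (1-\pi)\!\left(\tfrac{2|Y|}{\mu}-1\right)(\Delta_2-\Delta_2^0) - O(\eta(Y)),
\]
with the Taylor remainder of $\log$ producing an error of size $O(\sum_j(\pi_j-\pi)^2)$; rough bounds on this remainder, stratified by whether $j$ is neutral or non-neutral, yield $O(\pi^2 N + \phi Q_2(Y))$, both of which are absorbed into $\eta(Y)$. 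As in the warm-up, I would fix the ordering on $Y^\cc$ so that the neutral indices (those with $E_j\cap R(Y)=\emptyset$) come first.

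For a neutral $j$ we have $\Pr(A_j'(Y))=\pi$, so~\eqref{mainbd} together with the FKG-style bound~\eqref{eq:hateps:pihatdelta} gives $\pi-\pi_j = (\eps_j-\pi\delta_j)/(1-\delta_j) \ge 0$. I would expand $\eps_j$ and $\delta_j$ to first order: apply the union bound to $\eps_j$ and a truncated inclusion--exclusion lower bound to $\delta_j$, stripping the conditioning on $D_0(j)$ via~\eqref{mainbd0}. The Bonferroni error terms count triples $\{i,i',j\}$ that induce a connected subgraph under $\sim$ and are thus $O(\Delta_3)$, while the conditioning correction contributes $O(\phi\Delta_2) = O(\Delta_3)$ under the hypothesis $\phi\Delta_2=O(\Delta_3)$. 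Because neutral indices come first in the ordering, $i\to j$ forces $i$ to be neutral too, so summing over neutral $j$ sweeps over unordered pairs of neutral indices exactly once, yielding
\[
\sum_{j\text{ neutral}}(\pi-\pi_j) \,\le\, \sum_{\substack{\{i,j\}\text{ both neutral}\\ i\sim j}}\!\bigl[\Pr(A_i\cap A_j)-\Pr(A_i)\Pr(A_j)\bigr] + O(\Delta_3).
\]

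For a non-neutral $j$, combining~\eqref{simple} with the formula $\Pr(A_j'(Y))=\pi p^{-|E_j\cap R(Y)|}$ gives $\pi_j-\pi \ge \pi(p^{-|E_j\cap R(Y)|}-1) - \heps_j$. The key algebraic identity $\pi(p^{-|E_i\cap E_j|}-1) = \pi^{-1}[\Pr(A_i\cap A_j)-\Pr(A_i)\Pr(A_j)]$, together with inclusion--exclusion for $|E_j\cap R(Y)|=\bigl|\bigcup_{i\in Y,\,i\sim j}(E_j\cap E_i)\bigr|$ (over-counts bounded by $Q_2(Y)$) and the linearisation $p^{-k}-1 \approx k(p^{-1}-1)$ (higher-order terms bounded by $Q_3(Y)$), converts the leading contribution of non-neutral $j$ into $\pi^{-1}\sum\bigl[\Pr(A_i\cap A_j)-\Pr(A_i)\Pr(A_j)\bigr]$ over unordered cross-pairs $\{i,j\}$ with $i\in Y,\,j\in Y^\cc,\,i\sim j$. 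The $\heps_j$ correction, split by whether the partner $i\to j$ lies in $Y$, is neutral in $Y^\cc$, or is non-neutral in $Y^\cc$, contributes errors bounded by $\phi Q_2(Y)$, $Q_3(Y)$, and $Q_4(Y)$ respectively (in the last case the non-neutral partner itself has a witness in $Y$, producing a four-tuple matching the intersection pattern of~\eqref{Q4def}). To assemble the two halves, I would apply an $L_2$-style symmetry identity analogous to the warm-up:
\[
\sum_{j\in Y}\sum_{i\sim j}\bigl[\Pr(A_i\mid A_j)-\Pr(A_i)\bigr] \,=\, \tfrac{2|Y|}{\mu}(\Delta_2-\Delta_2^0),
\]
which rewrites the $\pi^{-1}$-scaled cross-pair sum as $(2|Y|/\mu)(\Delta_2-\Delta_2^0)$ up to $O(\phi Q_2(Y))$ from pairs both in $Y$. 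Decomposing $\Delta_2-\Delta_2^0$ by pair-type (both in $Y$, cross, both in $Y^\cc$-neutral, both in $Y^\cc$ but not both neutral) and combining with the neutral bound produces exactly the coefficient $(2|Y|/\mu)-1$ in front of $\Delta_2-\Delta_2^0$, with residual terms controlled by $\phi Q_2(Y), Q_3(Y), Q_4(Y)$ and $\pi^2 N$.

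The principal obstacle will be the careful bookkeeping of conditioning in the neutral analysis: each approximation $\Pr(A_i'(Y)\mid D_0(j)) \approx \Pr(A_i)$ may cost a multiplicative error of size $O(\phi)$ per event via~\eqref{mainbd0}, and I must arrange that these errors aggregate only into terms of size $\Delta_3$ (using $\phi\Delta_2 = O(\Delta_3)$) or $\phi Q_2(Y)$. A secondary challenge is verifying that the ``missing'' pair categories --- pairs both in $Y$, pairs both in $Y^\cc$ but not both neutral, and higher-order corrections in the expansion of $p^{-|E_j\cap R(Y)|}$ when the overlap size exceeds $1$ --- are all controlled by the $Q_k$ quantities matching the definitions~\eqref{Q2def}--\eqref{Q4def}, which is precisely why those quantities appear in $\eta(Y)$.
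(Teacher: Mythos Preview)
Your overall strategy is close to the paper's: both start from~\eqref{PrE}, order neutral indices first, use~\eqref{mainbd} to expand $\pi_j$ for neutral $j$, and use~\eqref{simple} for the rest. The tangent-line reduction at $x=\pi$ is a harmless reformulation of the paper's $1-\pi_j\le e^{-\pi_j}$ (they differ only by $O(\pi^2 N)$ in the exponent), and your mention of a Taylor remainder is simply extraneous: concavity gives an exact upper bound, so no quadratic term needs to be controlled here.

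The genuine gap is in how you handle what the paper calls \emph{complex} indices --- those $j\in Y^\cc$ with at least two witnesses in $Y$ --- which you fold into ``non-neutral'' and treat via~\eqref{simple}. For \emph{simple} $j$ (a single witness $i\in Y$) the identity $\Pr(A_j')-\pi=\Pr(A_j\mid A_i)-\pi$ is exact, so no linearisation is needed and this matches the paper's Claim~\ref{Sclm}. For complex $j$ your argument breaks: the ``linearisation'' $p^{-k}-1\approx k(p^{-1}-1)$ is simply false (the error at $k=2$ is $(p^{-1}-1)^2$, not of the form you claim), and the inclusion--exclusion over-counts in $|E_j\cap R(Y)|$ are governed by triples $\{i,i',j\}$ with $i,i'\in Y$, hence by $Q_3$-type quantities rather than $Q_2(Y)$. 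More seriously, applying~\eqref{simple} to complex $j$ forces you to bound $\sum_{j\text{ complex}}\heps_j=\sum_{j\text{ complex}}\sum_{i\to j}\Pr(A_i'\cap A_j')$; when both $i$ and $j$ are complex, $\Pr(A_i'\cap A_j')$ can vastly exceed $\Pr(A_i\cap A_j)$, and these terms are not controlled by anything in $\eta(Y)$. (Your phrase ``whether the partner $i\to j$ lies in $Y$'' is also a category error, since $i\to j$ forces $i\in Y^\cc$.)

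The paper sidesteps all of this by refining non-neutral into simple and complex, ordering simple before complex, and for complex $j$ using only the trivial bound $\pi_j\ge 0$ (i.e.\ $1-\pi_j\le 1$). The cost of discarding complex $j$ is absorbed into Claim~\ref{Abd}, which shows that $\pi$ times the number of non-neutral indices is at most $(2|Y|/\mu)\Delta_2^0+O(\pi^2 N)$. The $(2|Y|/\mu)\Delta_2$ contribution you need comes cleanly from the simple terms alone via the $L_2$-identity. With this three-way split in place of your two-way one, your tangent-line organisation goes through.
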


Since the statement of Theorem~\ref{thmain} is rather complicated, a quick sanity check is perhaps called for:~note that taking $Y = \emptyset$, we obtain
$$\Pr( I = \emptyset ) \le (1-\pi)^{N} \exp\Big( \Delta_2 - \Delta_2^0 + O\big( \Delta_3 + \pi^2 N \big) \Big),$$
by~\eqref{tYdef},~\eqref{Q2def},~\eqref{Q3def} and~\eqref{Q4def}. Since the cumulants (defined in~\cite[Section~1.3]{MNPS-non}) satisfy $\kappa_1 = \mu$ and $\kappa_2 = \Delta_2 - \Delta_2^0$, this does indeed match (in this special case) the result of Mousset, Noever, Panagiotou and Samotij (see~\cite[Theorem~11]{MNPS-non}), although the error terms are different. 

\begin{remark}\label{rmk:phiD2D3}
It will be easy to show that the bound $\phi \Delta_2 = O(\Delta_3)$ in Theorem~\ref{thmain}  holds in all of our applications. Indeed, it suffices that for any $E_i$ and $E_j$ with a given intersection size, there is another $E_k$ intersecting $E_j$ in the same number of elements. Then for each term $\Pr(A_i \cap A_j)$ contributing to $\Delta_2$, by symmetry we find some $k \ne i$ with $\Pr(A_k \,|\, A_j) = \phi$, and obtain a contribution of $\Pr(A_i \cap A_j \cap A_k) \ge \Pr(A_i \cap A_j)\Pr(A_k \,|\, A_j)$ to the sum defining $\Delta_3$. 
\end{remark}


To make sense of the error term in Theorem~\ref{thmain}, let us first prove the bounds claimed above on the expectation of $Q_i(I)$. 

\begin{lemma}\label{lem:ExQi}
$\Ex\big[ Q_i(I) \big] = O(\Delta_i)$ for each $i \in \{2,3,4\}$. 
\end{lemma}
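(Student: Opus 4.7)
My plan is to compute each $\Ex[Q_i(I)]$ directly by linearity of expectation---using the basic identity $\1[j\in I]=\1[A_j]$---and then to reduce the resulting sum to a constant multiple of $\Delta_i$. The case $i=2$ is immediate: linearity gives $\Ex[Q_2(I)] = \frac{1}{2}\sum_{i\sim j}\Pr(A_i\cap A_j) = \Delta_2$ by~\eqref{Delta2def}, so no further work is required.

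For $i=3$, applying linearity to~\eqref{Q3def} yields a sum of terms of the form $\Pr(A_i\cap A_j)\cdot\Pr(A_k \mid A_i)$, indexed by (ordered labellings of) $3$-clusters. The key inequality I would establish is
\[
\Pr(A_k \mid A_i) \,\le\, \Pr(A_k \mid A_i \cap A_j),
\]
after which multiplying by $\Pr(A_i\cap A_j)$ shows that each term is at most $\Pr(A_i\cap A_j\cap A_k)$, and summing over the (at most $6$) labellings of each $3$-cluster gives $\Ex[Q_3(I)] = O(\Delta_3)$. To justify the displayed inequality I will condition on $A_i$: since $A_i$ is the event that the coordinates of $X$ indexed by $E_i$ all equal $1$, the conditional law is still a product measure on the remaining coordinates, and $A_j$ and $A_k$ remain up-sets with respect to it. Harris' Lemma applied in this conditional space then gives $\Pr(A_j\cap A_k\mid A_i)\ge \Pr(A_j\mid A_i)\Pr(A_k\mid A_i)$, which rearranges to the desired bound.

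For $i=4$ the computation is even cleaner: by the definition of conditional probability, each summand in $\Ex[Q_4(I)]$ telescopes to
\[
\Pr(A_i\cap A_j)\cdot\Pr(A_{i'}\cap A_{j'}\mid A_i\cap A_j) \,=\, \Pr(A_i\cap A_j\cap A_{i'}\cap A_{j'}),
\]
and the constraints $i\sim i'\sim j'\sim j$ force $\{i,j,i',j'\}$ to be a connected subset of $[N]$, i.e.\ a $4$-cluster. Summing over the valid ordered quadruples, each $4$-cluster is counted at most a bounded number of times, yielding $\Ex[Q_4(I)]=O(\Delta_4)$.

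The only real (and quite mild) subtlety is the FKG step for $Q_3$: one has to check that conditioning on the up-set $A_i$ preserves both the product structure of the underlying probability space and the up-set property of the events $A_j,A_k$, but this is automatic in the present set-up, where each $A_i$ is the conjunction of `$x\in X_p$' events over $x\in E_i$.
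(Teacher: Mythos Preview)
Your proposal is correct and follows essentially the same route as the paper's proof: linearity of expectation for each $Q_i$, the identity $\Ex[Q_2(I)]=\Delta_2$, the Harris/FKG inequality $\Pr(A_k\mid A_i)\le \Pr(A_k\mid A_i\cap A_j)$ for $Q_3$, and the direct telescoping $\Pr(A_i\cap A_j)\cdot\Pr(A_{i'}\cap A_{j'}\mid A_i\cap A_j)=\Pr(A_i\cap A_j\cap A_{i'}\cap A_{j'})$ for $Q_4$. Your explicit justification that conditioning on $A_i$ preserves the product structure (so Harris applies) is a welcome elaboration of what the paper simply attributes to ``Harris' Lemma''.
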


\begin{proof}
As noted above, it follows immediately from~\eqref{Q2def} that $Q_2(I) = W_2(I)$, so $\Ex\big[ Q_2(I) \big] = \Delta_2$. Next, observe that 
\begin{align}\label{eq:ExQ3}
\Ex\big[ Q_3(I) \big] & \, = \sum_{\text{3-clusters } \{i,j,k\}} \Pr\big( A_i \cap A_j \big) \cdot \Pr\big( A_k \mid A_i \big) \nonumber \\
& \, \le \sum_{\text{3-clusters } \{i,j,k\}}\Pr\big( A_i \cap A_j \big) \cdot \Pr\big( A_k \mid A_i \cap A_j \big) = O(\Delta_3),
\end{align}
by Harris' Lemma and the definition~\eqref{Dkdef} of $\Delta_3$. Finally, note that
\begin{equation}\label{eq:ExQ4}
\Ex\big[ Q_4(I) \big] = \sum_{\substack{i \sim i' \sim j' \sim j \\ i' \not\sim j, \, i \not\sim j'}} \Pr\big( A_i \cap A_j \cap A_{i'} \cap A_{j'} \big) = O(\Delta_4),
\end{equation}
as required, since each term in the sum corresponds to a 4-cluster.
\end{proof}


Using Lemma~\ref{lem:ExQi}, we obtain the following corollary of Theorem~\ref{thmain} with more usable bounds. Recall that $\omega = \omega(n) \to \infty$ as $n \to \infty$ arbitrarily slowly.

\begin{corollary}\label{cormain}
Suppose that\/ $\phi \Delta_2 = O(\Delta_3)$, and that also $\mu \ge \omega^2$, $\Delta_2 = O(\mu)$ and $\Delta_4 = O(\Delta_3)$. Then there exists a collection $\cG$ of `good' outcomes $Y \subset [N]$, with $\Pr\big( I \in \cG \big) \to 1$ as $n \to \infty$, such that 
\begin{equation}\label{prss}
\Pr\bb{ I = Y } \le \pi^{|Y|} (1-\pi)^{N-|Y|} p^{-t(Y)} e^{-\Delta_2 + \Delta_2^0 + O(\omega \Delta_2 \mu^{-1/2} + \omega \Delta_3 + \pi^2 N)}
\end{equation}
for every $Y \in \cG$. Moreover, we can choose $\cG$ to be the set of\/ $Y \subset [N]$ such that \begin{equation}\label{def:G:cormain}
\big| |Y| - \mu \big| \le \omega \sqrt{\mu} \qquad \text{and} \qquad Q_i(Y) \le \omega \cdot \Ex\big[ Q_i(I) \big]
\end{equation}
for each $i \in \{2,3,4\}$.
\end{corollary}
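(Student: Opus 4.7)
The plan is to check that the set $\cG$ defined by~\eqref{def:G:cormain} satisfies $\Pr(I \in \cG) \to 1$, and then to substitute the bounds coming from the definition of $\cG$ into the conclusion of Theorem~\ref{thmain}.

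For the concentration step, I will bound $\Var(|I|)$ using the fact that $A_i$ and $A_j$ are independent whenever $i \not\sim j$, so that only the pairs with $i \sim j$ contribute covariance; together with the hypothesis $\Delta_2 = O(\mu)$ this gives $\Var(|I|) \le \mu + 2\Delta_2 = O(\mu)$. Chebyshev then yields $\Pr\bigl( \bigl| |I| - \mu \bigr| > \omega\sqrt{\mu} \bigr) = O(1/\omega^2) = o(1)$, where the assumption $\mu \ge \omega^2$ ensures that the deviation $\omega\sqrt{\mu}$ is small relative to $\mu$ so the resulting condition is meaningful. For the remaining three conditions in~\eqref{def:G:cormain}, Markov combined with Lemma~\ref{lem:ExQi} gives $\Pr(Q_i(I) > \omega\Ex[Q_i(I)]) \le 1/\omega = o(1)$ for each $i \in \{2,3,4\}$ (trivially when the expectation vanishes), and a union bound over these four bad events delivers $\Pr(I \in \cG) \to 1$.

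For a fixed $Y \in \cG$ I then apply Theorem~\ref{thmain} and tidy up the exponent. The main term $-(2|Y|/\mu - 1)(\Delta_2 - \Delta_2^0)$ can be rewritten as $-(\Delta_2 - \Delta_2^0) + O(\omega \Delta_2 \mu^{-1/2})$ by writing $|Y| = \mu + O(\omega\sqrt{\mu})$ and using the Harris bound $0 \le \Delta_2 - \Delta_2^0 \le \Delta_2$ (which applies because the $A_i$ are up-sets). For the error term $\eta(Y) = O(\Delta_3 + \phi Q_2(Y) + Q_3(Y) + Q_4(Y) + \pi^2 N)$, the membership $Y \in \cG$ together with Lemma~\ref{lem:ExQi} gives $Q_2(Y) \le \omega\Delta_2$ and $Q_i(Y) = O(\omega\Delta_i)$ for $i \in \{3,4\}$; then $\phi Q_2(Y) = O(\omega \phi \Delta_2) = O(\omega \Delta_3)$ by the hypothesis $\phi\Delta_2 = O(\Delta_3)$, and $Q_4(Y) = O(\omega\Delta_3)$ by $\Delta_4 = O(\Delta_3)$. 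Collecting everything gives $\eta(Y) = O(\omega\Delta_3 + \pi^2 N)$, which together with the main term yields exactly the exponent claimed in~\eqref{prss}.

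I expect no real obstacle: every step is either a direct substitution or a standard concentration inequality. The one small point requiring a moment's care is the variance bound for $|I|$; a naive estimate through $\Ex[|I|^2]$ would introduce an unwanted $\mu^2$ contribution from pairs with $i \not\sim j$, so one must exploit the independence of $\1_{A_i}$ and $\1_{A_j}$ for non-adjacent pairs to restrict the covariance sum to $i \sim j$ before invoking $\Delta_2 = O(\mu)$. Once this is noted, the rest is bookkeeping inside the exponent of Theorem~\ref{thmain}.
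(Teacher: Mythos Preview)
Your proposal is correct and follows essentially the same approach as the paper's own proof: Chebyshev (via the variance bound $\Var(|I|)\le \mu+2\Delta_2=O(\mu)$) and Markov to show $\Pr(I\in\cG)\to 1$, then substitution into Theorem~\ref{thmain} using $|Y|=\mu+O(\omega\sqrt{\mu})$, $\Delta_2^0\le\Delta_2$, Lemma~\ref{lem:ExQi}, and the hypotheses $\phi\Delta_2=O(\Delta_3)$, $\Delta_4=O(\Delta_3)$ to simplify the exponent. The only point you might state slightly more explicitly is that $\mu\ge\omega^2$ is used to guarantee $|Y|=O(\mu)$ for $Y\in\cG$, which is the hypothesis needed to invoke Theorem~\ref{thmain}; you allude to this but the paper makes it explicit.
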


\begin{proof}
Note first that the random set $I$ satisfies the conditions~\eqref{def:G:cormain} with high probability by Chebyshev's inequality and Markov's inequality respectively, since $|I|$ has expectation $\mu$ and variance at most 
\begin{equation}\label{eq:easy:variance:I}
\mu + \sum_{i \sim j} \Pr\big( A_i \cap A_j \big) = \mu + 2\Delta_2 = O(\mu).
\end{equation}
Now, let $\cG$ be the set of $Y \subset [N]$ satisfying~\eqref{def:G:cormain}, and observe that $|Y|=O(\mu)$ for every $Y \in \cG$, since $\mu \ge \omega^2$. Observe also that 
$$\bigg( \frac{2|Y|}{\mu} - 1 \bigg) \big( \Delta_2 - \Delta_2^0 \big) = \Delta_2 - \Delta_2^0 + O\bigg( \frac{\omega \Delta_2}{\sqrt{\mu}} \bigg)$$
for every $Y \in \cG$, since $\Delta_2^0 \le \Delta_2$, by Harris' Lemma. To deduce~\eqref{prss} from Theorem~\ref{thmain}, observe that 
\begin{equation}\label{eq:Qs:bound}
\phi \cdot Q_2(Y) + Q_3(Y) + Q_4(Y) = O( \omega \Delta_3)
\end{equation}
for every $Y \in \cG$, by Lemma~\ref{lem:ExQi} and~\eqref{def:G:cormain}, and hence that
$$- \bigg( \frac{2|Y|}{\mu} - 1 \bigg) \big( \Delta_2 - \Delta_2^0 \big) + \eta(Y) = - \Delta_2 + \Delta_2^0 + O\bigg( \frac{\omega \Delta_2}{\sqrt{\mu}} + \omega \Delta_3 + \pi^2 N \bigg)$$
for every $Y \in \cG$. The bound~\eqref{prss} therefore follows from Theorem~\ref{thmain}.
\end{proof}

Before diving into the details of the proof of Theorem~\ref{thmain}, let us give a brief sketch of the argument. Roughly speaking, our aim is to separate the effect of $2$-clusters, which contribute to the main correction term, from that of larger clusters, which we will throw into the error term. To do so, it will be useful to partition the set $Y^\cc$ as follows: we say that an index $j \notin Y$ is \emph{neutral} if there is no $i \in Y$ with $i \sim j$, that $j$ is \emph{simple} if there is exactly one $i \in Y$ with $i \sim j$, and that $j$ is \emph{complex} otherwise. In evaluating the probability that none of the events $A_j'(Y)$ holds as a product of conditional probabilities as in~\eqref{PrE}, we define an order $\prec$ on the indices $j \in Y^\cc$ so that neutral ones come first, then simple ones, then complex ones. We will show that
\begin{equation}\label{eq:main:neutral:contribution}
\pjn (1-\pi_j) \le \exp\bigg( - \mu + \bigg( \frac{2|Y|}{\mu} - 1 \bigg) \Delta_2^0 + \Delta_2 + O\big( \Delta_3 + \pi^2 N \big) \bigg)
\end{equation}
and 
$$\pjs (1-\pi_j) = \exp\bigg( - \frac{2|Y|}{\mu} \cdot \Delta_2 + O\Big( \Delta_3 + \phi \cdot Q_2(Y) + Q_3(Y) + Q_4(Y) \Big) \bigg).$$
Together with the trivial bound $\pi_j \ge 0$ for every complex $j$, by~\eqref{PrE} this will turn out to be sufficient to prove the theorem. 

The following basically trivial lemma will be used in the proof below.

\begin{lemma}\label{sumdown}
Let $D$ be a down-set, and define $d(A)=\Pr(A)-\Pr(A\mid D)$ for any event $A$. Then, if\/ $U_1,\ldots,U_k$ are up-sets, we have
\[
 0 \le d(U_1\cup\cdots\cup U_k) \le d(U_1)+\cdots+d(U_k).
\]
\end{lemma}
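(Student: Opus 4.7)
The plan is to prove the two inequalities separately, with Harris' Lemma (FKG) as the only nontrivial ingredient. For the lower bound $0\le d(U_1\cup\cdots\cup U_k)$, I would first observe that the union of up-sets is an up-set, so Harris' Lemma applied to the up-set $U=U_1\cup\cdots\cup U_k$ and the down-set $D$ gives $\Pr(U\cap D)\le\Pr(U)\Pr(D)$, that is, $\Pr(U\mid D)\le \Pr(U)$, which is exactly $d(U)\ge 0$.

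For the upper bound I would induct on $k$. The case $k=1$ is trivial. For $k=2$, expanding with inclusion--exclusion gives
\[
 d(U_1\cup U_2)=d(U_1)+d(U_2)-\bigl[\Pr(U_1\cap U_2)-\Pr(U_1\cap U_2\mid D)\bigr],
\]
so the desired inequality $d(U_1\cup U_2)\le d(U_1)+d(U_2)$ is equivalent to $\Pr(U_1\cap U_2\mid D)\le \Pr(U_1\cap U_2)$. Since the intersection of up-sets is an up-set, another application of Harris' Lemma (in the same form used above) gives exactly this. For the induction step, set $V=U_1\cup\cdots\cup U_{k-1}$, which is itself an up-set; applying the $k=2$ case to the pair $V,U_k$ yields $d(V\cup U_k)\le d(V)+d(U_k)$, and the inductive hypothesis gives $d(V)\le\sum_{i=1}^{k-1}d(U_i)$, completing the step.

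There is no real obstacle here: the only subtlety is noticing that after inclusion--exclusion the $k=2$ statement collapses to a single instance of Harris' Lemma applied to the up-set $U_1\cap U_2$, so the whole lemma is essentially a clean corollary of FKG plus induction. I would keep the write-up short, emphasising that both bounds are ``up-set vs.\ down-set'' applications of Harris.
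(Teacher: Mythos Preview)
Your proof is correct and matches the paper's own argument essentially line for line: Harris' Lemma for the lower bound, inclusion--exclusion to reduce the $k=2$ upper bound to $d(U_1\cap U_2)\ge 0$ (again Harris, since $U_1\cap U_2$ is an up-set), and then induction on $k$.
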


\begin{proof}
The first inequality is just Harris' Lemma. For the second,
starting with the identity $\Pr(U_1\cup U_2) = \Pr(U_1) + \Pr(U_2) - \Pr(U_1\cap U_2)$ and subtracting the same identity for probabilities conditioned on $D$, we obtain
\[
 d(U_1\cup U_2) = d(U_1)+d(U_2) - d(U_1\cap U_2) \le d(U_1)+d(U_2),
\]
since $U_1\cap U_2$ is an up-set. This proves the case $k=2$. The general case now follows easily by induction on $k$.
\end{proof}

The rest of this section is devoted to the proof of Theorem~\ref{thmain}.

\begin{proof}[Proof of Theorem~\ref{thmain}]
Fix a possible outcome $Y \subset [N]$ with $|Y| = O(\mu)$, and partition $Y^\cc$ into neutral, simple and complex terms as defined above. Fix an ordering $\prec$ of the elements of $Y^\cc$ such that neutral terms come first, then simple ones, then complex ones. Since the set $Y$ will be fixed throughout the proof, we will write $A_j'$ for $A_j'(Y)$. Our first main aim is to prove~\eqref{eq:main:neutral:contribution}, which bounds the contribution of the neutral terms to~\eqref{PrE}. 

\medskip\noindent
{\bf Neutral terms:}
For neutral terms we use~\eqref{mainbd}, which implies that
\begin{equation}\label{pijlo}
 \pi_j \ge (\pi-\eps_j)(1+\delta_j) = \pi - \eps_j + \pi\delta_j - \eps_j\delta_j,
\end{equation}
since if $j$ neutral then $\Pr(A_j') = \Pr(A_j) = \pi$. We will consider the sum of each of these terms one by one.  

\begin{claim}\label{Abd}
$$\sjn \pi \ge \mu - \frac{2|Y|}{\mu} \cdot \Delta_2^0 + O(\pi^2N).$$
\end{claim}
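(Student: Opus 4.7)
The plan is to observe that $\sjn \pi = \pi N_0$, where $N_0$ denotes the number of neutral indices in $Y^\cc$, and then bound $N_0$ from below by overcounting the non-neutral $j \in Y^\cc$.

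First, writing $N_{\mathrm{nn}}$ for the number of $j \in Y^\cc$ that are not neutral, we have $N_0 \ge N - |Y| - N_{\mathrm{nn}}$. By the union bound, $N_{\mathrm{nn}} \le \sum_{i \in Y} |\{j \in [N] : j \sim i\}|$, and by the symmetry of $\cH$ the quantity $d := |\{j \in [N] : j \sim i\}|$ is the same for every $i$. Hence $N_{\mathrm{nn}} \le |Y| \cdot d$.

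Next, I would compute $d$ by relating it to $\Delta_2^0$. Since the sum in~\eqref{def:Delta20} runs over ordered pairs $(i,j)$ of distinct indices with $i \sim j$, of which there are exactly $N d$ by symmetry, we have $\Delta_2^0 = \tfrac{1}{2} \cdot N d \cdot \pi^2$, and therefore $d = 2\Delta_2^0 / (\pi \mu)$, using $\pi N = \mu$.

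Combining the above gives
\[
 \sjn \pi \,=\, \pi N_0 \,\ge\, \pi N - \pi|Y| - \pi|Y| d \,=\, \mu - \frac{2|Y|}{\mu}\Delta_2^0 - \pi|Y|.
\]
Since $|Y| = O(\mu) = O(\pi N)$, the subtracted term $\pi|Y|$ is $O(\pi^2 N)$, which yields the claim. There is no real obstacle here beyond bookkeeping; the only delicate point is using the symmetry of $\cH$ to identify $d$ via $\Delta_2^0$ (the independent analogue) rather than $\Delta_2$ itself, so that the correction term matches the $\Delta_2^0$ appearing in the statement of Theorem~\ref{thmain}.
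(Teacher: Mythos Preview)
Your proof is correct and essentially identical to the paper's: both bound the number of non-neutral $j$ by $|Y|(1+d)$ and identify $\pi d = 2\Delta_2^0/\mu$ via symmetry. The only cosmetic difference is that the paper extracts $d$ through the ``expectation trick'' $\Ex[|I|]=\mu$ (the same device used for \eqref{XY1} and throughout the proof), whereas you compute $d$ directly from the definition of $\Delta_2^0$; the content is the same.
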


\begin{clmproof}{Abd}
Recall that if $j\in [N]$ is not neutral, then either $j \in Y$ or there exists $i \in Y$ with $i \sim j$. Since $\cH$ is symmetric and $\Pr(A_j) = \pi$ for every $j \in [N]$, it follows that
\begin{equation}\label{eq:Abd:basic}
 \sum_{j\in [N] \text{ not neutral}}  \pi \, \le \, \pi |Y| +\, \sum_{i \in Y} \sum_{j \sim i} \, \Pr(A_j) = c |Y|
\end{equation}
for some $c \ge 0$. Since $\Ex[|I|] = \mu$ and 
$$\Ex\bigg[  \sum_{i \in I} \sum_{j \sim i} \Pr(A_j) \bigg] = \Ex\bigg[ \sum_{i \sim j} \1[A_i] \cdot \Pr(A_j) \bigg] = \sum_{i \sim j} \Pr(A_i) \cdot \Pr(A_j) = 2\Delta_2^0,$$
it follows (as in the proof of~\eqref{XY1}) that $c = \pi + 2\Delta_2^0 / \mu$, and hence
$$\sjn \pi \ge \pi N - \bigg( \pi + \frac{2\Delta_2^0}{\mu} \bigg) |Y| = \mu - \frac{2|Y|}{\mu} \cdot \Delta_2^0 + O(\pi^2N),$$
as claimed, since $|Y| = O(\mu)$ and $\mu = \pi N$.
\end{clmproof}

We now turn to the sum of the $\eps_j$. In the present argument we only need an upper bound, which is simply $\Delta_2$ as shown in \eqref{sjne}. However, we will need a lower bound later, and the same method applies (in a less intuitive way) to the sum over $\pi\delta_j$, for which we need a lower bound, so we give the details.

\begin{claim}\label{Bbd}
$$\sjn \eps_j = \Delta_2 + O(\Delta_3).$$
\end{claim}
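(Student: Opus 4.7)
The upper bound $\sjn \eps_j \le \Delta_2$ is immediate from the union bound $\eps_j \le \heps_j$ together with \eqref{sjne}. The substance of the claim is the matching lower bound $\sjn \eps_j \ge \Delta_2 - O(\Delta_3)$, which I would prove in two steps: first show $\sjn \heps_j = \Delta_2 - O(\Delta_3)$, and then show $\sjn \eps_j \ge \sjn \heps_j - O(\Delta_3)$.

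For the first step, since the ordering places neutral indices first, $\sjn \heps_j$ equals the sum of $\Pr(A_i \cap A_j)$ over unordered pairs $\{i,j\}$ with $i \sim j$ and \emph{both} $i, j$ neutral; the pairs missing relative to $\Delta_2$ therefore have at least one non-neutral vertex. If that vertex lies in $Y$, the symmetry of $\cH$ bounds the total such contribution by
$$\sum_{k \in Y}\sum_{j \sim k}\Pr(A_k \cap A_j) \;=\; O(|Y|\Delta_2/N) \;=\; O(\pi\Delta_2) \;=\; O(\phi\Delta_2) \;=\; O(\Delta_3),$$
using $|Y| = O(\mu) = O(\pi N)$, the inequality $\phi \ge \pi$ (which holds because $\Pr(A_i \mid A_j) \ge \Pr(A_i) = \pi$ by Harris), and the hypothesis $\phi\Delta_2 = O(\Delta_3)$. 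Otherwise neither $i$ nor $j$ lies in $Y$ and some vertex (say $i$) has a neighbor $k \in Y$, so that $\{i, j, k\}$ forms a 3-cluster; Harris gives $\Pr(A_i \cap A_j) \le \pi^{-1}\Pr(A_i \cap A_j \cap A_k)$, and summing over valid triples (each 3-cluster is realised in at most a bounded number of ways) yields $O(\pi^{-1} \cdot |Y|\Delta_3/N) = O(\Delta_3)$, again by $|Y|/N = O(\pi)$.

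For the second step, fix $j$ neutral, so that $\eps_j = \Pr\big(\bigcup_{i \to j}(A_i \cap A_j) \mid D_0(j)\big)$. Bonferroni, together with Harris to remove the conditioning in the second-order term, gives
\begin{equation*}
 \eps_j \;\ge\; \sum_{i \to j}\Pr(A_i \cap A_j \mid D_0(j)) \;-\; \sum_{\{i_1,i_2\} \subset \{i \,:\, i \to j\}}\Pr(A_{i_1} \cap A_{i_2} \cap A_j),
\end{equation*}
and the subtracted term, summed over $j$ neutral, is $O(\Delta_3)$ because each triple $\{i_1, i_2, j\}$ forms a 3-cluster. To lower-bound each term in the main sum, I apply \eqref{simple0} with $U = A_i \cap A_j$, splitting $D_0(j)$ according to whether $A_{i'}$ is independent of $U$ -- equivalently, whether $i' \not\sim i$, since $i' \not\sim j$ already. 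This yields
\begin{equation*}
 \Pr(A_i \cap A_j \mid D_0(j)) \;\ge\; \Pr(A_i \cap A_j) \;-\; \sum_{\substack{i' \prec j \\ i' \not\sim j,\, i' \sim i}}\Pr(A_i \cap A_j \cap A_{i'}),
\end{equation*}
and summing the error over neutral $j$ and $i \to j$ again produces $O(\Delta_3)$, since each triple $\{i, j, i'\}$ is a 3-cluster. Combining the two steps gives $\sjn \eps_j \ge \Delta_2 - O(\Delta_3)$, which with the easy upper bound completes the proof.

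The main obstacle is the second sub-case of the first step: the naive Harris bound $\Pr(A_i \cap A_j) \le \pi^{-1}\Pr(A_i \cap A_j \cap A_k)$ appears to lose a factor of $1/\pi$, and this is only rescued by the observation that the relevant 3-clusters must meet $Y$, so by symmetry their probability sum is proportional to $|Y|/N = O(\pi)$. The hypothesis $\phi\Delta_2 = O(\Delta_3)$ plays a similarly essential role in the first sub-case.
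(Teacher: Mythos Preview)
Your proof is correct and follows essentially the same approach as the paper. The paper organizes the argument via the decomposition $\sjn \eps_j = B_0 - B_1 - B_2$ (main term, conditioning error, union-bound slack), while you split it as ``upper bound plus two-step lower bound''; but your step~1 is exactly the paper's bound $B_0 = \Delta_2 + O(\Delta_3)$ (note $\sjn \heps_j = B_0$), and your Bonferroni and conditioning steps in step~2 are respectively the paper's bounds $B_2 = O(\Delta_3)$ and $B_1 = O(\Delta_3)$. The only cosmetic difference is that in step~1 the paper handles both sub-cases at once via $\sum_{k \in Y}\sum_{i \sim j \sim k}\Pr(A_i \cap A_j)$ (allowing $k=i$) and the direct symmetry/expectation trick, avoiding the detour through the Harris factor $\pi^{-1}$ that you then cancel against $|Y|/N = O(\pi)$; but the content is identical.
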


\begin{clmproof}{Bbd}
Recall (see Section~\ref{ssu}) the definition of $\eps_j$, and that we write $i \to j$ if both $i \sim j$ and $i \prec j$. Noting that $\Pr(A_j') = \Pr(A_j)$ if $j \in Y^\cc$ is neutral, observe that
\begin{equation}\label{eq:Bbd:split}
\sjn \eps_j = B_0 - B_1 - B_2,
\end{equation}
where
$$B_0 = \sjn \, \sum_{i \to j} \, \Pr\big( A_i \cap A_j \big),$$
is the main term, 
$$B_1 = \sjn \Pr\bigg( \bigcup_{i \to j} \big( A_i \cap A_j \big) \bigg) - \sjn \Pr\bigg( \bigcup_{i \to j} \big( A_i \cap A_j \big) \Bigmid D_0(j) \bigg)$$
is the effect of conditioning on the down-sets $D_0(j)$ (see~\eqref{def:Delta0j}), and 
$$B_2 = \sjn \, \sum_{i \to j} \, \Pr\big( A_i \cap A_j \big) - \sjn \Pr\bigg( \bigcup_{i \to j} \big( A_i \cap A_j \big) \bigg)$$
is the slack in the union bound. We will show that
\begin{equation}\label{B0bd}
B_0 = \Delta_2 + O(\Delta_3),
\end{equation}
and that $B_1$ and $B_2$ are both $O(\Delta_3)$. 

To prove~\eqref{B0bd}, note first that $B_0$ is the sum of $\Pr(A_i \cap A_j)$ over unordered pairs $\{i,j\}$ such that $i\sim j$ and $i$ and $j$ are both neutral (by our choice of $\prec$), while $\Delta_2$ is the same sum without the restriction to neutral pairs. It follows that
$$\Delta_2 - \sum_{k \in Y} \sum_{i \sim j \sim k} \Pr\big( A_i \cap A_j \big) \le B_0 \le \Delta_2,$$
since if $i$ and $j$ are not both neutral, then either one of them (wlog $i$) is in $Y$ (note that we allow $k = i$ in the second sum above), or there exists $k \in Y$ with (wlog) $j \sim k$. By symmetry, the double sum is (as in the previous claim) equal to $c|Y|$ for some $c \ge 0$, and 
$$\Ex\bigg[ \sum_{k \in I} \sum_{i \sim j \sim k} \Pr\big( A_i \cap A_j \big) \bigg] \le \sum_{i \sim j \sim k} \Pr(A_k) \cdot \Pr\big( A_i \cap A_j \big) = O(\Delta_3),$$
since the terms with $k = i$ contribute at most $\pi \Delta_2 \le \phi \Delta_2 = O(\Delta_3)$, those with $k \ne i$ correspond to 3-clusters, and $\Pr(A_k) \cdot \Pr( A_i \cap A_j) \le \Pr( A_i \cap A_j \cap A_k)$, by Harris' Lemma. 
Since $\Ex[|I|] = \mu$, it follows that $c = O(\Delta_3 / \mu)$, and hence 
\begin{equation}\label{ijkY}
 \sum_{k \in Y} \sum_{i \sim j \sim k} \Pr\big( A_i \cap A_j \big) = O\bigg( \frac{\Delta_3 |Y|}{\mu} \bigg) =  O(\Delta_3),
\end{equation}
since $|Y| = O(\mu)$. We therefore have $B_0 = \Delta_2+O(\Delta_3)$, as claimed.

To bound $B_1$, recall that $D_0(j)$ is a down-set for each $j$, and therefore, by applying Lemma~\ref{sumdown} to the family of up-sets $\big\{ A_i \cap A_j : i \to j \big\}$ for each neutral $j$, we have
\begin{equation}\label{eq:sumdown:app}
 B_1 \le \sjn \,\sum_{i \to j}\, \Big( \Pr\big( A_i \cap A_j \big) - \Pr\big( A_i \cap A_j \mid D_0(j) \big) \Big).
\end{equation}
To bound the right-hand side, we now apply~\eqref{simple0} for each pair $i \to j$ such that $j$ is neutral, with $U = A_i \cap A_j$, $\cS = \big\{ \ell : \ell \not\to j \big\}$, $U_\ell = A_\ell$ for each $\ell \in \cS$, 
and the partition $\cS = \cS_0 \cup \cS_1$ given by setting
$$\cS_1 = \big\{ \ell \in \cS \,:\, \ell \sim i \big\}.$$ 
Note that this is a legal partition, since $\ell \in \cS_0$ implies $\ell \not\sim j$ and $\ell \not\sim i$, so the events $A_i \cap A_j$ and $A_\ell$ are independent, as required. Recalling that $D_0(j)$ is the intersection of the events $\big\{ A_\ell^\cc : \ell \not\to j \big\}$, it follows from~\eqref{simple0} and~\eqref{eq:sumdown:app} that\footnote{Indeed, by~\eqref{simple0} we have $\Pr(U) - \Pr\big( U \mid D_0 \cap D_1) \le \sum_{\ell \in \cS_1} \Pr\big( U \cap U_\ell \big)$, which is equivalent to the claimed inequality $\Pr( A_i \cap A_j ) - \Pr( A_i \cap A_j \mid D_0(j) ) \le \sum_{\ell \in \cS_1} \Pr(A_i \cap A_j \cap A_\ell )$.} 
\begin{equation}\label{eq:B1bd}
 0 \le B_1 \le \sjn \,\sum_{i \to j} \, \sum_{\substack{\ell \not\to j \\ \ell \sim i}} \, \Pr\big(A_i \cap A_j \cap A_\ell \big).
\end{equation}
Each triple $(i,j,\ell)$ with $i \to j$, $\ell \not\to j$ and $\ell \sim i$ corresponds to a 3-cluster, since $\ell \sim i \sim j$, and $\ell \not\to j$ implies that $j \ne \ell$, so it follows from~\eqref{eq:B1bd} that $B_1 = O(\Delta_3)$, as claimed. 

Finally, to bound $B_2$ (the loss in the union bound), simply observe that 
$$0 \le B_2 \le \sjn \, \sum_{i \to j} \, \sum_{\substack{\ell \to j\\ \ell \prec i}} \, \Pr\big(A_i \cap A_j \cap A_\ell) = O(\Delta_3),$$
by inclusion--exclusion, and since each triple $(i,j,\ell)$ in the sum corresponds to a 3-cluster, since $i \sim j \sim \ell$ and $\ell \ne i$. Combining this with~\eqref{eq:Bbd:split} and~\eqref{B0bd} and our bound on $B_1$, the claim follows.
\end{clmproof}

We can bound the sum of $\pi\delta_j$ over neutral $j$ in the same way, using the fact that $\pi \cdot \Pr(A_i) \le \Pr(A_i \cap A_j)$. Since the proof of the following claim is almost identical to that of Claim~\ref{Bbd}, we will be somewhat briefer with the details. 

\begin{claim}\label{Cbd}
$$\pi \sjn \delta_j = \Delta_2^0 + O(\Delta_3).$$
\end{claim}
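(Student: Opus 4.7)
The plan is to mirror the proof of Claim~\ref{Bbd} almost verbatim, replacing the pair-probability $\Pr(A_i\cap A_j)$ appearing there by $\pi\Pr(A_i)=\Pr(A_i)\Pr(A_j)$; by Harris' Lemma the latter is bounded above by $\Pr(A_i\cap A_j)$, so any place where the earlier argument needed to absorb a probability into a $3$-event intersection will continue to work, at the cost of accounting for a few degenerate terms. Concretely, decompose
$$\pi\sjn\delta_j \,=\, C_0 - C_1 - C_2,$$
where $C_0 = \pi\sjn\sum_{i\to j}\Pr(A_i)$ is the main (linear) term, $C_1\ge 0$ is the gain from conditioning on the down-sets $D_0(j)$, and $C_2\ge 0$ is the slack in the union bound $\delta_j\le\hdelta_j$. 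I would then show separately that $C_0 = \Delta_2^0 + O(\Delta_3)$ and $C_1,C_2 = O(\Delta_3)$.

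For the main term, $C_0$ is simply the sum of $\Pr(A_i)\Pr(A_j)$ over unordered pairs $\{i,j\}$ with $i\sim j$ in which both indices are neutral, while by~\eqref{def:Delta20} the quantity $\Delta_2^0$ is the analogous sum without the neutrality restriction. Following the $B_0$ computation and~\eqref{ijkY}, the missing terms are bounded by $\sum_{k\in Y}\sum_{i\sim j\sim k}\Pr(A_i)\Pr(A_j)$, which by the symmetry of $\cH$ equals $c|Y|$ for some constant $c$ obtained by averaging over $k\in I$. Using the Harris inequality $\Pr(A_i)\Pr(A_j)\Pr(A_k)\le\Pr(A_i\cap A_j\cap A_k)$, the contribution of triples with $i,j,k$ all distinct (i.e. genuine $3$-clusters) is bounded by the sum defining $\Delta_3$, and the degenerate contribution with $k\in\{i,j\}$ is at most $2\pi\Delta_2^0\le 2\phi\Delta_2 = O(\Delta_3)$ by the hypothesis $\phi\Delta_2=O(\Delta_3)$. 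Hence $c=O(\Delta_3/\mu)$, and since $|Y|=O(\mu)$ we get $C_0 = \Delta_2^0+O(\Delta_3)$.

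For $C_1$, apply Lemma~\ref{sumdown} with $D=D_0(j)$ and the up-sets $\{A_i:i\to j\}$ to obtain
$$C_1 \,\le\, \pi\sjn\sum_{i\to j}\bigl(\Pr(A_i)-\Pr(A_i\mid D_0(j))\bigr),$$
then bound each difference using~\eqref{simple0} with $U=A_i$, $\cS=\{\ell:\ell\not\to j\}$ and $\cS_1=\{\ell\in\cS:\ell\sim i\}$; this is a legal partition because for $\ell\in\cS_0$ the sets $E_i$ and $E_\ell$ are disjoint, so $A_i$ and $A_\ell$ are independent. This yields $\Pr(A_i)-\Pr(A_i\mid D_0(j))\le \sum_{\ell\not\to j,\,\ell\sim i}\Pr(A_i\cap A_\ell)$; multiplying by $\pi=\Pr(A_j)$ and invoking Harris gives $\pi\Pr(A_i\cap A_\ell)\le\Pr(A_i\cap A_j\cap A_\ell)$. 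Each triple $(i,j,\ell)$ so produced consists of three distinct indices ($\ell\ne i$ since $i\sim j$ but $\ell\not\sim j$, and $\ell\ne j$ since $\ell\prec j$) and forms a $3$-cluster via $\ell\sim i\sim j$, so summing gives $C_1=O(\Delta_3)$. The slack $C_2$ is handled by inclusion--exclusion exactly as for $B_2$: it is bounded by $\pi\sjn\sum_{i\to j}\sum_{\ell\to j,\,\ell\prec i}\Pr(A_i\cap A_\ell)$, and again Harris converts each summand into the probability of a $3$-cluster event.

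The only real obstacle is the main-term bookkeeping: unlike in Claim~\ref{Bbd}, where the factor $\Pr(A_i\cap A_j)$ already encoded two events, here the sum $\sum_{k\in I}\sum_{i\sim j\sim k}\Pr(A_i)\Pr(A_j)$ contains degenerate contributions with $k\in\{i,j\}$ that do not by themselves form a $3$-cluster, and these must be absorbed using the extra factor of $\pi\le\phi$. Controlling them is precisely what forces the hypothesis $\phi\Delta_2=O(\Delta_3)$ into play.
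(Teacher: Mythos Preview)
Your proposal is correct and follows essentially the same route as the paper: the identical decomposition $\pi\sjn\delta_j = C_0 - C_1 - C_2$, the same use of Lemma~\ref{sumdown} and~\eqref{simple0} for $C_1$, and the same inclusion--exclusion for $C_2$, with Harris' inequality $\pi\Pr(A_i\cap A_\ell)\le\Pr(A_i\cap A_j\cap A_\ell)$ supplying the extra factor needed to reach $\Delta_3$. The only cosmetic difference is in the $C_0$ step: the paper notes $\pi^2\le\Pr(A_i\cap A_j)$ and directly invokes~\eqref{ijkY} (already established in Claim~\ref{Bbd}), whereas you re-derive the analogous bound via the triple-Harris inequality $\Pr(A_i)\Pr(A_j)\Pr(A_k)\le\Pr(A_i\cap A_j\cap A_k)$ and handle the degenerate case $k=i$ separately---both arguments are valid and lead to the same $O(\Delta_3)$ conclusion.
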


\begin{clmproof}{Cbd}
Recalling the definition of $\delta_j$, and that $\Pr(A_j') = \Pr(A_j) = \pi$ if $j \in Y^\cc$ is neutral, observe that
\begin{equation}\label{eq:Cbd:split}
\pi \sjn \delta_j = C_0 - C_1 - C_2,
\end{equation}
where
$$C_0 = \sjn \, \sum_{i \to j} \, \pi^2,$$
is the main term, 
$$C_1 = \pi \sjn \Pr\bigg( \bigcup_{i \to j} A_i \bigg) -\pi \sjn \Pr\bigg( \bigcup_{i \to j} A_i \Bigmid D_0(j) \bigg)$$
is the effect of conditioning on the down-sets $D_0(j)$, and 
$$C_2 = \pi \sjn \, \sum_{i \to j} \, \Pr(A_i) - \pi \sjn \Pr\bigg( \bigcup_{i \to j} A_i \bigg)$$
is the slack in the union bound. We will show that
\begin{equation}\label{C0bd}
C_0 = \Delta_2^0 + O(\Delta_3),
\end{equation}
and that $C_1$ and $C_2$ are both $O(\Delta_3)$. To prove~\eqref{C0bd}, note first that if we dropped the neutrality condition we would have exactly $\Delta_2^0$. To show that $\Delta_2^0 - C_0 = O(\Delta_3)$, we simply repeat the proof of~\eqref{B0bd}. Indeed, we have 
$$\Delta_2^0 - \sum_{k \in Y} \sum_{i \sim j \sim k} \pi^2 \le C_0 \le \Delta_2^0,$$
and since $\Pr(A_i\cap A_j)\ge \pi^2$, by Harris' Lemma, it follows from~\eqref{ijkY} that the double sum is $O(\Delta_3)$, giving $\Delta_2^0 - C_0 = O(\Delta_3)$, as claimed.

We can also bound $C_1$ and $C_2$ as we did for $B_1$ and $B_2$. Indeed, applying Lemma~\ref{sumdown} to the family of up-sets $\big\{ A_i : i \to j \big\}$ for each neutral $j$, we have
$$C_1 \le \pi \sjn \,\sum_{i \to j}\, \Big( \Pr(A_i) - \Pr\big( A_i \mid D_0(j) \big) \Big).$$
Moreover, applying~\eqref{simple0} for each pair $i \to j$ such that $j$ is neutral with $U = A_i$, and with the set $\cS$, the up-sets $U_\ell$, and the partition $\cS = \cS_0 \cup \cS_1$ as before, we obtain
$$0 \le C_1 \le \pi \sjn \,\sum_{i \to j} \, \sum_{\substack{\ell \not\to j \\ \ell \sim i}} \, \Pr\big( A_i \cap A_\ell \big).$$
Since $\Pr\big( A_j \mid A_i \cap A_\ell \big) \ge \pi$, we have $\pi \cdot \Pr\big( A_i \cap A_\ell \big) \le \Pr\big(A_i \cap A_j \cap A_\ell \big)$, and so the bound $C_1 = O(\Delta_3)$ follows as before. 
Finally, observe that
$$0 \le C_2 \le \pi \sjn \, \sum_{i \to j} \, \sum_{\substack{\ell \to j\\ \ell \prec i}} \, \Pr\big(A_i \cap A_\ell) = O(\Delta_3),$$
by inclusion--exclusion, and since $\pi \cdot \Pr\big( A_i \cap A_\ell \big) \le \Pr\big(A_i \cap A_j \cap A_\ell \big)$, by Harris' Lemma, and each triple $(i,j,\ell)$ in the sum corresponds to a 3-cluster. Combining this with~\eqref{eq:Cbd:split} and~\eqref{C0bd} and our bound on $C_1$, the claim follows.
\end{clmproof}

Finally, we need to bound the sum of $\eps_j \delta_j$. 

\begin{claim}\label{Dbd}
$$\sjn \eps_j \delta_j \le \sjn\heps_j\hdelta_j = O(\Delta_3).$$
\end{claim}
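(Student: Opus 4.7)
The first inequality is immediate from the definitions: we already noted in the text that $\eps_j\le\heps_j$ and $\delta_j\le\hdelta_j$, so it only remains to bound $\sjn \heps_j\hdelta_j$. My plan is to expand this as a triple sum and split into two regimes depending on whether the two summation variables coincide.

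Writing out the definitions in~\eqref{def:epshat:deltahat}, we have
\[
\sjn \heps_j \hdelta_j \,=\, \sjn \,\sum_{i\to j}\,\sum_{\ell\to j}\, \Pr\big(A_i \cap A_j\big) \cdot \Pr(A_\ell).
\]
I would first treat the diagonal contribution where $i=\ell$. Using $\Pr(A_\ell)=\pi$ and dropping the ordering restriction gives
\[
\sjn \,\sum_{i\to j}\, \pi \cdot \Pr\big(A_i \cap A_j\big) \,\le\, \pi \sum_{i \sim j} \Pr\big(A_i \cap A_j\big) \,=\, 2\pi\Delta_2,
\]
which is $O(\phi\Delta_2)=O(\Delta_3)$ since $\phi\ge\pi$ (taking any $i,j$ with $i\not\sim j$ gives $\Pr(A_i\mid A_j)=\pi$) and by the hypothesis of the theorem.

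For the off-diagonal contribution with $i\neq\ell$, the key observation is that if $i\to j$ and $\ell\to j$ with $i\neq\ell$, then $\{i,j,\ell\}$ is a set of three distinct indices with $i\sim j\sim \ell$, hence forms a $3$-cluster (whether or not $i\sim \ell$). Harris' Lemma then gives $\Pr(A_\ell)\cdot\Pr(A_i\cap A_j)\le \Pr(A_i\cap A_j\cap A_\ell)$, and therefore
\[
\sjn \sum_{\substack{i\to j,\,\ell\to j\\ i\ne\ell}} \Pr\big(A_i\cap A_j\big)\cdot \Pr(A_\ell) \,\le\, \sum_{\substack{\{a,b,c\}\text{ 3-cluster}\\ (j,i,\ell)\text{ ordering of }\{a,b,c\}}} \Pr\big(A_a\cap A_b\cap A_c\big),
\]
where each $3$-cluster is overcounted by at most $3!=6$ choices of labelling $(j,i,\ell)$. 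This is $O(\Delta_3)$ by the definition of $\Delta_3$. Combining with the diagonal estimate yields the claim.

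I do not expect any real obstacle here: the argument is essentially bookkeeping, and the only place where something substantive is used is the hypothesis $\phi\Delta_2=O(\Delta_3)$ to absorb the diagonal $i=\ell$ contribution. The remaining off-diagonal part falls out immediately from Harris' Lemma together with the observation that two neighbours of a common vertex form a $3$-cluster with it.
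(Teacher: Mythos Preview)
Your proof is correct and follows essentially the same route as the paper's: expand $\heps_j\hdelta_j$ as a double sum over $i\to j$ and $\ell\to j$, split into the diagonal case $i=\ell$ (bounded by $\pi\Delta_2\le\phi\Delta_2=O(\Delta_3)$) and the off-diagonal case $i\ne\ell$ (bounded by $O(\Delta_3)$ via Harris' Lemma and the observation that $\{i,j,\ell\}$ forms a $3$-cluster). One small point you leave implicit is that for neutral $j$ and $i\to j$ we have $A_i'=A_i$ and $A_j'=A_j$ (since the ordering puts neutral indices first), which is needed to pass from the definitions in~\eqref{def:epshat:deltahat} involving $A_i',A_j'$ to your displayed formula with $A_i,A_j$; the paper recalls this explicitly.
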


\begin{clmproof}{Dbd}
Recall from~\eqref{def:epshat:deltahat} the definitions of $\heps_j$ and $\hdelta_j$, and that $\eps_j \le \heps_j$ and $\delta_j \le \hdelta_j$. To bound the second sum, recall that if $j$ is neutral and $i \to j$ then $\Pr(A_i') = \Pr(A_i)$ and $\Pr(A_i' \cap A_j') = \Pr(A_i \cap A_j)$, and therefore 
$$\heps_j\hdelta_j = \sum_{i \to j} \sum_{k \to j} \Pr\big( A_i \cap A_j \big) \cdot \Pr(A_k) \le \sum_{i \sim j} \sum_{k \sim j} \Pr\big( A_i \cap A_j \big) \cdot \Pr(A_k)$$
 for each neutral $j$. Summing over neutral $j$ now gives 
 $$\sjn \heps_j\hdelta_j \le \sum_j \sum_{i \sim j} \sum_{k \sim j} \Pr\big( A_i \cap A_j \big) \cdot \Pr(A_k) = O\big( \pi\Delta_2 + \Delta_3 \big) = O(\Delta_3),$$
since the terms with $k = i$ contribute $O(\pi\Delta_2) = O(\Delta_3)$, and the terms with $k \ne i$ contribute at most $O(\Delta_3)$, since $\Pr\big( A_i \cap A_j \big) \cdot \Pr(A_k) \le \Pr\big( A_i \cap A_j \cap A_k \big)$ and each triple corresponds to a 3-cluster. 
\end{clmproof}

Putting the pieces together, from~\eqref{pijlo} and Claims~\ref{Abd}--\ref{Dbd}, we have
$$\sjn \pi_j \ge \mu - \bigg( \frac{2|Y|}{\mu} - 1 \bigg) \Delta_2^0 - \Delta_2 + O(\Delta_3) +O(\pi^2 N),$$
and therefore 
\begin{equation}\label{prodneutral}
 \pjn (1-\pi_j) \le \exp\bigg( - \mu + \bigg( \frac{2|Y|}{\mu} - 1 \bigg) \Delta_2^0 + \Delta_2 + O\big( \Delta_3 + \pi^2 N \big) \bigg),
\end{equation}
as claimed in~\eqref{eq:main:neutral:contribution}. It remains to deal with the simple and complex $j$. 

\medskip\noindent
{\bf Simple terms:}
For simple $j$ we will be able to use~\eqref{simple} rather than \eqref{mainbd}, since a simple $j$ already corresponds (in some sense) to a cluster of size $2$, so we only need to evaluate $\sjs \pi_j$ to `first order'. Our main task will be to prove the following claim.

\begin{claim}\label{Sclm}
$$\sjs \pi_j = \frac{2|Y|}{\mu} \cdot \Delta_2 +O\Big( \Delta_3 + \phi \cdot Q_2(Y) + Q_3(Y) + Q_4(Y) \Big).$$
\end{claim}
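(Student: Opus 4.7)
The plan is to apply the simple bound~\eqref{simple} to each simple $j$, giving $\Pr(A_j'(Y)) - \heps_j \le \pi_j \le \Pr(A_j'(Y))$. For a simple $j$, if $\sigma(j) \in Y$ denotes the unique index with $\sigma(j) \sim j$, then $E_j \cap R(Y) = E_j \cap E_{\sigma(j)}$, so
\[ \Pr(A_j'(Y)) = p^{|E_j \setminus R(Y)|} = \pi \cdot p^{-|E_j \cap E_{\sigma(j)}|} = \Pr(A_j \mid A_{\sigma(j)}). \]
Thus $\sjs \pi_j$ agrees with $\sjs \Pr(A_j \mid A_{\sigma(j)})$ up to an error of $\sjs \heps_j$, and the task splits into computing the main sum and bounding the error.

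For the main sum, I would repeat the symmetry trick from the proof of Claim~\ref{Abd}. Since $\cH$ is symmetric, $\sum_{j \sim i_0} \Pr(A_j \mid A_{i_0})$ does not depend on $i_0$; multiplying this common value by $\mu = \pi N$ recovers $\sum_{i \sim j} \Pr(A_i \cap A_j) = 2\Delta_2$, so the common value equals $2\Delta_2/\mu$. Summing over $i_0 \in Y$ yields $\sum_{i_0 \in Y} \sum_{j \sim i_0} \Pr(A_j \mid A_{i_0}) = 2|Y|\Delta_2/\mu$. Passing from this sum to $\sjs \Pr(A_j \mid A_{\sigma(j)})$ requires subtracting two pieces: the terms with $j \in Y$, which contribute at most $2\phi Q_2(Y)$ since each summand is at most $\phi$ and there are $2 Q_2(Y)$ ordered such pairs; and the terms with $j \notin Y$ complex. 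For the latter, using that each complex $j$ has at least two neighbors in $Y$ (so $|\{k \in Y : k \sim j\}| - 1 \ge 1$), we bound
\[ \sum_{i_0 \in Y, i_0 \sim j} \Pr(A_j \mid A_{i_0}) \le \sum_{\substack{i_0, k \in Y \\ i_0 \sim j, \, k \sim j, \, i_0 \ne k}} \Pr(A_j \mid A_{i_0}), \]
and each term on the right corresponds to a 3-cluster $\{i_0, k, j\}$ with two indices in $Y$, matching a summand of $Q_3(Y)$; the total is $O(Q_3(Y))$.

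For $\sjs \heps_j$, I would analyze $\Pr(A_i'(Y) \cap A_j'(Y))$ for $i \prec j$, $i \sim j$ and $j$ simple; by the ordering $i$ must be neutral or simple. Writing $Y' = \{k \in Y : k \sim i \text{ or } k \sim j\}$, a direct computation of $|(E_i \cup E_j) \setminus R(Y)|$ gives $\Pr(A_i'(Y) \cap A_j'(Y)) = \Pr(A_i \cap A_j \mid \bigcap_{k \in Y'} A_k)$, and one splits on $|Y'| \in \{1,2\}$. If $|Y'| = 1$, then $Y' = \{i_0\}$ (either $i$ is neutral, or $\sigma(i) = i_0 = \sigma(j)$), and $\{i, j, i_0\}$ is a 3-cluster with $i_0 \in Y$; summing $\Pr(A_i \cap A_j \mid A_{i_0}) = \Pr(A_i \cap A_j \cap A_{i_0})/\pi$ over all such configurations gives $O(\Delta_3 |Y|/\mu) = O(\Delta_3)$ by the same symmetry argument as in~\eqref{ijkY}, since $|Y| = O(\mu)$. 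If $|Y'| = 2$, then $i$ is simple with $\sigma(i) = k_0 \ne i_0 = \sigma(j)$, and $(k_0, i, j, i_0)$ forms a 4-cluster satisfying $k_0 \sim i \sim j \sim i_0$ with $i \not\sim i_0$ and $k_0 \not\sim j$, precisely the structure indexing $Q_4(Y)$ in~\eqref{Q4def}; the summand $\Pr(A_i \cap A_j \mid A_{k_0} \cap A_{i_0})$ matches the weight in~\eqref{Q4def}, so the total contribution is $O(Q_4(Y))$.

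The main obstacle is the $|Y'| = 2$ case: one must identify the correct 4-cluster structure and verify that the non-adjacencies $i \not\sim i_0$ and $k_0 \not\sim j$ in the definition of $Q_4(Y)$ hold, which is true precisely because $i$ and $j$ are simple with distinct $\sigma$-values. Once the correspondences are established, Harris' Lemma together with the symmetry arguments already used for the neutral case handle the remaining bookkeeping, and combining the upper bound $\pi_j \le \Pr(A_j'(Y))$ with the lower bound $\pi_j \ge \Pr(A_j'(Y)) - \heps_j$ yields the two-sided estimate claimed.
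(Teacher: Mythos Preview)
Your proposal is correct and follows essentially the same approach as the paper's proof. The only cosmetic difference is in how you organize the analysis of $\sjs \heps_j$: you split according to $|Y'| \in \{1,2\}$, whereas the paper splits according to whether $i$ is neutral, simple with $\sigma(i)=\sigma(j)$, or simple with $\sigma(i)\ne\sigma(j)$; these are exactly the same three subcases (the first two being your $|Y'|=1$ case), and both arguments bound them by $O(\Delta_3)$ and $Q_4(Y)$ via the same symmetry computation~\eqref{eq:Sclm:usual:method}.
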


\begin{clmproof}{Sclm}
Observe first that, by~\eqref{def:epshat:deltahat} and~\eqref{simple}, we have
\begin{equation}\label{eq:Sclm:firststep}
 \bigg| \sjs \pi_j - \sjs \Pr(A_j') \bigg| \le \sjs \heps_j = \sjs \, \sum_{i \to j} \, \Pr\big( A_i' \cap A_j' \big).
\end{equation}
Observe that if $j$ is simple then $\Pr(A_j') = \Pr\big( A_j \mid A_i \big)$ for the unique $i \in Y$ such that $i \sim j$. Thus, by~\eqref{XYdef} and~\eqref{XY1}, we have
\[
\sjs \Pr(A_j') \le \sum_{i \in Y} \sum_{j \sim i} \Pr\big( A_j \mid A_i \big) = L_2(Y) = \frac{2|Y|}{\mu} \cdot \Delta_2.
\]
To prove a lower bound, note that a pair $(i,j)$ with $i \in Y$ and $j \sim i$ contributes to $L_2(Y) - \sjs \Pr(A_j')$ if and only if at least one of the following happens: $j \in Y$, or there is some $k \in Y \setminus \{i,j\}$ such that $k \sim j$. Hence
\begin{equation}
0 \le L_2(Y) - \sjs \Pr(A_j') \le \sum_{\substack{i,j \in Y\\ i \sim j}} \Pr\big( A_j \mid A_i \big) + \sum_{\substack{i,k \in Y\\ i \ne k}} \sum_{i \sim j \sim k} \Pr\big( A_j \mid A_i \big).
\end{equation}
Now, observe that 
$$\sum_{\substack{i,j \in Y\\ i \sim j}} \Pr\big( A_j \mid A_i \big) = O\big( \phi \cdot Q_2(Y) \big),$$
by~\eqref{Q2def} and~\eqref{phidef}, and that 
$$\sum_{\substack{i,k \in Y\\ i \ne k}} \sum_{i \sim j \sim k} \Pr\big( A_j \mid A_i \big) \le \sum_{\text{3-clusters } \{i,j,k\}} \1\big[ i,k \in Y \big] \cdot \Pr\big( A_j \mid  A_i \big) = Q_3(Y),$$
by~\eqref{Q3def}. Recalling~\eqref{XY1}, it follows that
\begin{equation}\label{eq:Sbd}
\sjs \Pr(A_j') = \frac{2|Y|}{\mu} \cdot \Delta_2 + O\big( \phi \cdot Q_2(Y) + Q_3(Y) \big).
\end{equation}

To bound the right-hand side of~\eqref{eq:Sclm:firststep}, recall that if $j$ is simple and $i \to j$, then $i$ is either neutral or simple, by our choice of the ordering $\prec$. We first consider terms with $i$ neutral. In such terms $A_i' = A_i$ and $\Pr\big( A_i' \cap A_j' \big) = \Pr\big( A_i \cap A_j \mid A_k \big)$, where $k$ is the unique element of $Y$ with $j \sim k$. The total contribution of such terms is at most
$$\sum_{k \in Y} \sum_{\substack{i \sim j \sim k\\ i \ne k}} \Pr\big( A_i \cap A_j \mid A_k \big) = c |Y|$$
for some $c \ge 0$, by symmetry. Since $\Ex[|I|] = \mu$ and
$$\Ex\bigg[ \sum_{k \in I} \sum_{\substack{i \sim j \sim k\\ i \ne k}} \Pr\big( A_i \cap A_j \mid A_k \big) \bigg] = \sum_{\substack{i \sim j \sim k\\ i \ne k}} \Pr\big( A_i \cap A_j \cap A_k \big) = O(\Delta_3),$$
it follows that $c = O(\Delta_3/\mu)$, and hence 
\begin{equation}\label{eq:Sclm:usual:method}
\sum_{k \in Y} \sum_{\substack{i \sim j \sim k\\ i \ne k}} \Pr\big( A_i \cap A_j \mid A_k \big) = O\bigg( \frac{\Delta_3 |Y|}{\mu} \bigg) = O(\Delta_3),
\end{equation}
since $|Y| = O(\mu)$. 
On the other hand, if $i$ and $j$ are both simple, then there is a unique $\ell \in Y$ such that $i \sim \ell$. Here there are two cases. If $k = \ell$ then $\Pr\big( A_i' \cap A_j' \big) = \Pr\big( A_i \cap A_j \mid A_k \big)$, and since $i \sim j \sim k \sim i$ and $k \in Y$, the contribution is at most
$$\sum_{k \in Y} \sum_{\substack{i \sim j \sim k\\ i \ne k}} \Pr\big( A_i \cap A_j \mid A_k \big) = O(\Delta_3),$$
by~\eqref{eq:Sclm:usual:method}, while if $k \ne \ell$, then it is at most
\[
 \sum_{\substack{k,\ell \in Y \\ k \ne \ell}} \sum_{\substack{i,j \in Y^\cc \\ \ell \sim i \sim j \sim k\\ i \not\sim k, \, j \not\sim \ell}}  \Pr\big( A_i \cap A_j \mid A_k \cap A_\ell \big) \le Q_4(Y),
\]
by~\eqref{Q4def}. Hence
\[
\sjs \, \sum_{i \to j} \, \Pr\big( A_i' \cap A_j' \big) = O\big( \Delta_3 + Q_4(Y) \big).
\]
Combining this with~\eqref{eq:Sclm:firststep} and~\eqref{eq:Sbd}, we obtain
\begin{equation}\label{sjs}
 \sjs \pi_j = \frac{2|Y|}{\mu} \cdot \Delta_2 + O\Big( \Delta_3 + \phi \cdot Q_2(Y) + Q_3(Y)+ Q_4(Y) \Big),
\end{equation}
as claimed.
\end{clmproof}

It follows from Claim~\ref{Sclm} that
$$\pjs (1 - \pi_j) \le \exp\bigg( - \frac{2|Y|}{\mu} \cdot \Delta_2 + O\Big( \Delta_3 + \phi \cdot Q_2(Y) + Q_3(Y) + Q_4(Y) \Big) \bigg),$$
and combining this with~\eqref{prodneutral}, we obtain
$$\prod_{j\text{ neutral or simple}} (1-\pi_j) \le \exp\bigg( - \mu - \bigg( \frac{2|Y|}{\mu} - 1 \bigg) \big( \Delta_2 - \Delta_2^0 \big) + \eta(Y) \bigg),$$
where
\[
\eta(Y) = O\Big( \Delta_3 + \phi \cdot Q_2(Y) + Q_3(Y) + Q_4(Y) + \pi^2 N \Big). 
\]
Finally, using the trivial bound $\pjc (1-\pi_j) \le 1$, and noting that 
\[
 (1-\pi)^{N - |Y|} = \exp\big( - \pi N + \pi |Y| + O(\pi^2N) \big) = \exp\big( - \mu + O(\pi^2N) \big),
\]
and that $p^{|R(Y)|} = \pi^{|Y|} p^{-t(Y)}$, the bound in Theorem~\ref{thmain} follows from~\eqref{PrE}, completing the proof of the theorem. 
\end{proof}

\section{The lower bound}\label{seclower}

In this section we prove a lower bound on the probability $\Pr(I=Y)$ that a certain subset of our events $E_i$ hold and the rest do not (corresponding, in the standard setting, to a specific set of copies of $H$ being the ones that are present in $G(n,p)$). This bound will roughly match the upper bound in Theorem~\ref{thmain}, up to the error term. Here there is a surprising annoyance, caused by the `complex' terms $j$. Recall that $j\in Y^\cc$ is \emph{complex} if there are two or more elements $i \in Y$ such that $i \sim j$. Fixing $Y \subset [N]$, and defining $A_j' = A_j'(Y)$ as before, let
\begin{equation}\label{Cdef}
 C(Y) = \sjc \Pr(A_j') = \sjc p^{|E_j\setminus R(Y)|}.
\end{equation}
Then $C(I)$ is a random variable, and its expectation counts certain clusters. Unfortunately, we do not in general have good control on their size. Roughly speaking, we expect the main contribution to come from cases where there are exactly two elements $i \in Y$ with $i \sim j$; note that the expectation of the sum of such terms is $O(\Delta_3)$. However, we can't simply ignore additional intersections between $E_j$ and sets $E_i$ with $i \in Y$, since they may increase the conditional probability of $A_j'$.

We first state a result with an error term involving $C(Y)$. Then we shall discuss methods of bounding $C(Y)$. We need a further very mild assumption, namely that $Y$ is \emph{possible}, meaning that $\Pr(I = Y) > 0$. Note that this is equivalent to there not existing any $j \in Y^\cc$ with $E_j \subset R(Y)$.

\begin{theorem}\label{th2way}
Suppose that $1 - p = \Omega(1)$, $\Delta_2^0 \le \mu/4$, and $\phi \Delta_2 = O(\Delta_3)$. Then, for every possible $Y\subset [N]$ with $|Y| = O(\mu)$, we have
$$\Pr( I = Y ) = \pi^{|Y|} (1-\pi)^{N - |Y|} p^{-t(Y)} \exp\bigg( - \bigg( \frac{2|Y|}{\mu} - 1 \bigg) \big( \Delta_2 - \Delta_2^0 \big) + \eta'(Y) \bigg),$$
where
$$\eta'(Y) = O\Big( \Delta_3 + \phi \cdot Q_2(Y) + Q_3(Y) + Q_4(Y) + \pi^2 N + C(Y) \Big).$$
\end{theorem}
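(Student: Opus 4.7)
The plan is to mirror the proof of Theorem~\ref{thmain}, using the same partition of $Y^\cc$ into neutral, simple and complex indices and the same ordering $\prec$, but now deriving matching \emph{upper} bounds on each $\pi_j$ so as to produce a lower bound on $\prod_{j \in Y^\cc}(1-\pi_j)$.

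For complex $j$ I would use the simple bound $\pi_j \le \Pr(A_j'(Y))$ from~\eqref{simple}. Since $Y$ is possible, $|E_j \setminus R(Y)| \ge 1$, so $\Pr(A_j'(Y)) \le p$ is bounded away from $1$ by assumption, and a Taylor expansion of $\log(1-x)$ yields $\pjc(1-\pi_j) \ge \exp\bigl(-(1+O(p))\,C(Y)\bigr) = e^{-O(C(Y))}$. This is the sole source of the extra $C(Y)$ term in $\eta'(Y)$. For simple $j$ the same bound gives $\pi_j \le \Pr(A_j'(Y)) = \Pr(A_j \mid A_i) \le \phi$ for the unique $i\in Y$ with $i\sim j$; since Claim~\ref{Sclm} is already stated as an equality, this yields the matching estimate $\sjs \pi_j = (2|Y|/\mu)\Delta_2 + O(\Delta_3 + \phi Q_2(Y) + Q_3(Y) + Q_4(Y))$, with quadratic error $\sum \pi_j^2 \le \phi\sjs \pi_j = O(\phi\Delta_2) = O(\Delta_3)$ absorbed into $\eta'(Y)$.

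The main work will be for the neutral terms. From~\eqref{mainbd},
\[
 \pi_j \;=\; \frac{\pi - \eps_j}{1 - \delta_j} \;\le\; \pi - \eps_j + \pi\delta_j + \frac{\pi\delta_j^2}{1 - \delta_j},
\]
and the equalities in Claims~\ref{Bbd} and~\ref{Cbd} immediately provide the matching contributions of $\sjn \eps_j$ and $\pi\sjn \delta_j$. What is new is the reverse direction of Claim~\ref{Abd}. Writing $\sjn \pi = N\pi - \pi|Y| - \pi(|\text{simple}| + |\text{complex}|)$ and using the identity $|\text{simple}| + |\text{complex}| = W - \sum_{j\text{ complex}}(m_j - 1)$, where $W = \sum_{i \in Y}|\{j \in Y^\cc : j \sim i\}| = d|Y| - 2Q_2(Y)$ and $d$ is the (constant, by symmetry) $\sim$-degree satisfying $\pi d = 2\Delta_2^0/\mu$, I would derive
\[
 \sjn \pi \;=\; \mu - \frac{2|Y|}{\mu}\Delta_2^0 + O(\pi^2 N) + 2\pi Q_2(Y) + \pi C',
\]
where $C' = \sum_{j \text{ complex}}(m_j - 1)$. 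The correction $2\pi Q_2(Y) \le 2\phi Q_2(Y)$ fits directly, while $\pi C'$ is controlled via Harris: each unit of $C'$ corresponds to an unordered pair $\{i_1, i_2\} \subset Y$ together with a $j \in Y^\cc$ forming a 3-cluster with $j \not\in Y$, so $\pi C' = O(Q_3(Y))$.

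Finally, for the quadratic remainder the hypothesis $\Delta_2^0 \le \mu/4$ together with $\hdelta_j \le \pi d = 2\Delta_2^0/\mu$ gives $\delta_j \le 1/2$ for neutral $j$, so $\sjn \pi\delta_j^2/(1-\delta_j) \le 2\sjn \pi\hdelta_j^2$. Expanding the square, $\sjn \pi\hdelta_j^2 = \pi^3 \sum_j |\{i \to j\}|^2$ splits into a diagonal contribution equal to $\pi^2 \Delta_2^0 \le \phi\Delta_2 = O(\Delta_3)$ and an off-diagonal contribution over triples $(i_1, i_2, j)$ with $i_1 \sim j \sim i_2$ and $i_1 \ne i_2$, which by Harris is at most $\sum \Pr(A_{i_1} \cap A_{i_2} \cap A_j) = O(\Delta_3)$. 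Collecting all contributions, and using $p^{|R(Y)|} = \pi^{|Y|} p^{-t(Y)}$ together with $(1-\pi)^{N-|Y|} = \exp(-\mu + O(\pi^2 N))$, the lower bound will follow and, combined with the upper bound of Theorem~\ref{thmain}, yields the claimed two-sided estimate. The main obstacle is the reverse of Claim~\ref{Abd}: the overcounting terms $2\pi Q_2(Y)$ and $\pi C'$ must be shown to fit into the existing error terms $\phi Q_2(Y)$ and $Q_3(Y)$, rather than spawning genuinely new contributions.
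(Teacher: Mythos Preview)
Your proposal is correct and follows essentially the same route as the paper: both run back through the proof of Theorem~\ref{thmain}, observe that Claims~\ref{Bbd}--\ref{Sclm} are already two-sided, reverse Claim~\ref{Abd} by bookkeeping the overcounting (your $2\pi Q_2(Y)$ and $\pi C'$ are exactly the paper's $\sum_{i,j\in Y,\,i\sim j}\pi + \sum_{i,k\in Y,\,i\ne k}\sum_{i\sim j\sim k}\pi$), use $\Delta_2^0\le\mu/4$ to get $\delta_j\le 1/2$ for the quadratic remainder, and handle complex $j$ via $\pi_j\le\Pr(A_j')\le p$. The only cosmetic differences are that the paper bounds $\sjn\pi\hdelta_j^2$ via $\pi\hdelta_j\le\heps_j$ and Claim~\ref{Dbd} rather than your direct diagonal/off-diagonal split (your diagonal is $\pi\Delta_2^0$, not $\pi^2\Delta_2^0$, but the conclusion $O(\Delta_3)$ is unaffected), and you should also note that $\sjn\pi_j^2=O(\pi^2 N)$ when passing from $\sum\pi_j$ to $\sum\log(1-\pi_j)$ for neutral~$j$.
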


Note that the only differences from Theorem~\ref{thmain} are the additional (mild) assumptions that $Y$ is possible, $1 - p = \Omega(1)$ and $\Delta_2^0 \le \mu/4$, the two-way rather than one-way bound, and the appearance of $C(Y)$ in the error term $\eta'(Y)$. We remark that a corresponding two-way version of Corollary~\ref{cormain} also follows.

To prove Theorem~\ref{th2way}, we simply go through the proof of Theorem~\ref{thmain}, noting that almost all of the bounds proved there are two-way, and showing how to give lower bounds in the remaining places.

\begin{proof}[Proof of Theorem~\ref{th2way}] 
The first place in the proof of Theorem~\ref{thmain} where we gave a one-way bound was~\eqref{pijlo}. We claim that, using~\eqref{mainbd}, we can replace this by
\[
 \pi_j = \frac{\pi-\eps_j}{1-\delta_j} = (\pi - \eps_j)(1 + \delta_j) + O( \pi\delta_j^2 ).
\]
This will follow if we can show that 
$\delta_j \le 1/2$ for each neutral $j \in Y^\cc$. To see why this is true, note that if $j$ is neutral, then 
\[
\delta_j \le \hdelta_j = \sum_{i \to j} \, \Pr(A_i') = \sum_{i \to j} \pi \le \sum_{i \sim j} \pi = \frac{2\Delta_2^0}{\pi N} = \frac{2\Delta_2^0}{\mu} \le \frac{1}{2},
\]
where the first two steps follow from~\eqref{def:epshat:deltahat} and Harris' Lemma, the third holds because $i$ is neutral for every $i \prec j$, and hence $\Pr(A_i') = \Pr(A_i) = \pi$, the fifth holds by~\eqref{def:Delta20} and since $\cH$ is symmetric, and the last holds by assumption. 

It follows that the loss in~\eqref{pijlo} is $O(\pi\delta_j^2)$ for each neutral $j$, and by~\eqref{eq:hateps:pihatdelta} and Claim~\ref{Dbd}, the sum of these terms is at most
\[
 \sjn \pi\delta_j^2 \le \sjn \pi \hdelta_j^2 \le \sjn \heps_j\hdelta_j = O(\Delta_3).
\]
The only place in the proof of Claim~\ref{Abd} in which we gave a one-way bound was in~\eqref{eq:Abd:basic}; the loss here is due to terms $j$ that we have overcounted because either $j \in Y$ and there is some $i \in Y$ with $i \sim j$, or $j \in Y^\cc$ and there is more than one $i \in Y$ with $i \sim j$. The loss is therefore at most
\begin{equation}
\sum_{\substack{i,j \in Y \\ i \sim j}} \pi + \sum_{\substack{i,k \in Y \\ i \ne k}} \sum_{i \sim j \sim k} \pi = O\big( \phi \cdot Q_2(Y) + Q_3(Y) \big), 
\end{equation}
by~\eqref{Q2def} and~\eqref{Q3def}, and this is already included in the error term $\eta(Y)$. Since Claims~\ref{Bbd}--\ref{Sclm} all give two-way bounds, the only other loss for the neutral and simple terms is in the inequality $1 - \pi_j \le e^{-\pi_j}$, which we used twice, in~\eqref{prodneutral} and in the final calculation. In both cases, we can replace it by the two-way bound
\[
\log(1-\pi_j) = - \pi_j + O(\pi_j^2),
\]
which holds because $\pi_j = \pi$ for every neutral $j$, and $\pi_j \le \phi = \max_{i\ne j} \Pr\big( A_j \mid A_i \big)$ for every simple $j$, and $1 - \phi \ge 1 - p = \Omega(1)$, by assumption. The total loss in the inequality is therefore at most
$$O\big( \pi \mu + \phi \Delta_2 + \phi \eta(Y) \big) = O\big( \pi^2 N + \Delta_3 + \eta(Y) \big) = O(\eta(Y)),$$ 
since $|Y| = O(\mu)$ and $\phi \Delta_2 = O(\Delta_3)$, and also $\phi \le 1$.

What remains is to deal with complex $\pi_j$, where we previously just used the trivial inequality $ \pi_j \ge 0$. Here we simply note that from the first inequality in \eqref{simple} (which is simply Harris' Lemma), we have
\[
 \sjc \pi_j \le \sjc \Pr(A_j') = C(Y),
\]
by~\eqref{Cdef}. Now, since we only consider possible sets $Y$, we have $E_j \not\subset R(Y)$ for every $j \in Y^\cc$, and therefore  $\pi_j \le \Pr(A_j') = p^{|E_j\setminus R(Y)|} \le p$, which is bounded away from $1$, by assumption. It follows that $\log(1-\pi_j)=O(\pi_j)$,
so
\[
 \pjc (1-\pi_j) = e^{O(C(Y))},
\]
completing the proof.
\end{proof}

This result is of course rather a cop-out: we have expressed the final error term simply as what comes out of the proof. We now show that it is indeed useful, by giving two approaches to bounding $C(Y)$. The first is most useful for the case of subgraphs of constant size. Recall that $\cH$ is $s$-uniform.

\begin{lemma}\label{CYbd1}
Suppose that $s = O(1)$. Then
\[
 \Ex\big[ C(I) \big] = O\big( \Delta_3 + \cdots + \Delta_{s+1} \big).
\]
\end{lemma}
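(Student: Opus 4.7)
The plan is to express $C(I)$ in a way that charges each contribution to a cluster of size between $3$ and $s+1$. The starting observation is that if $j \in I^\cc$ is complex with respect to $I$, then
\[
 R(I) \cap E_j \,=\, E_{N(j) \cap I} \cap E_j,
\]
where $N(j) = \{i \ne j : i \sim j\}$, because elements of $I \setminus N(j)$ have $E_i \cap E_j = \emptyset$. Thus the random exponent $|E_j \setminus R(I)|$ is determined by the restriction $N(j) \cap I$.

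For each complex $j$, I would select a \emph{minimal witness} $T^* = T^*(j,I) \subset N(j) \cap I$ with $|T^*| \ge 2$, $E_{T^*} \cap E_j = R(I) \cap E_j$, and $|T^*|$ minimal subject to these conditions. Such a $T^*$ exists and satisfies $2 \le |T^*| \le s$: a greedy cover of $E_j \cap R(I) \subseteq E_j$ uses at most $|E_j|=s$ of the sets $E_i$, and if only one set suffices to cover, we append any second element of $N(j) \cap I$, which exists because $j$ is complex. The identity $|E_j \setminus R(I)| = |E_j \setminus E_{T^*}|$ then yields the decomposition
\[
 \1[j \text{ complex}] \cdot p^{|E_j \setminus R(I)|} \,=\, \sum_{\substack{T \subset N(j)\\ 2 \le |T| \le s}} \1\big[T^*(j,I) = T\big] \cdot p^{|E_j \setminus E_T|}.
\]

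Taking expectations, summing over $j$, and using $\Pr\big(T^*(j,I) = T\big) \le \Pr(T \subset I) = p^{|E_T|}$, the product $p^{|E_T|} \cdot p^{|E_j \setminus E_T|}$ collapses to $\Pr\big(\bigcap_{i \in T \cup \{j\}} A_i\big)$. For each $(j,T)$ in the sum, the set $T \cup \{j\}$ is a $(|T|+1)$-cluster, since $j \sim i$ for every $i \in T$, and each such cluster is hit at most $|T|+1$ times (one for each choice of ``centre'' $j$ that is $\sim$-related to all other members). This gives
\[
 \Ex[C(I)] \,\le\, \sum_{k=2}^{s} (k+1)\,\Delta_{k+1} \,=\, O\bb{\Delta_3 + \cdots + \Delta_{s+1}},
\]
using $s = O(1)$ to absorb the factors $k+1$.

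The main obstacle is ensuring the sum is over clusters of bounded size. A naive decomposition based on the full set $N(j) \cap I$ would produce clusters of unbounded size, since the number of $i \in I$ intersecting $E_j$ has no a priori bound. The minimal-witness construction is precisely what couples the size of the cluster to the (at most $s$) points of $E_j$ that need to be covered, and this is where the hypothesis $s = O(1)$ is used.
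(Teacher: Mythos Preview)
Your argument is correct and follows essentially the same approach as the paper: for each complex $j$, pick a small witness set $T \subset N(j) \cap I$ with $E_{T} \cap E_j = R(I) \cap E_j$ and $2 \le |T| \le s$, then bound the resulting sum by cluster expectations. The paper packages this via the notion of a \emph{star-cluster} (Definition~\ref{defsc}) with a structural minimality condition (each leaf must contribute a point of $E_j$ not covered by the other leaves) rather than your cardinality minimality; this yields the slightly sharper range $|T| \le s-1$ when $|T|\ge 3$, but your bound $|T|\le s$ already suffices for the lemma as stated.
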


The key to the proof is the following definition.

\begin{definition}\label{defsc}
A \emph{star-cluster} $\cS$ is a pair $(j,T)$ where $j \in [N]$ and $T \subset [N]$ with $|T| \ge 2$ satisfy
$$E_j \not\subset R(T),$$
and if $|T| \ge 3$, then
$$E_i \cap E_j \not\subset R(T \setminus \{i\})$$
for every $i \in T$. 
\end{definition}

In words, the centre of the star $E_j$ is not contained in the union of the leaves $\{ E_i : i \in T \}$, and the set of leaves is minimal given $E_j \cap R(T)$, meaning that if $|T| > 2$, then each leaf intersects $E_j$ in some element that is not contained in any of the other leaves. Note that the definition implies that $|T| \le |E_j|$.

We will say that the star-cluster $\cS = (j,T)$ is \emph{pre-present} in $Y$ if $T \subset Y$, and \emph{present} in $Y$ if also $j \in Y$. Define
$$\pi_0(\cS) = p^{|R(T)|}, \qquad \pi_1(\cS) = p^{|E_j \cup R(T)|} \quad \text{and} \quad \pi_c(\cS) = p^{|E_j \setminus R(T)|},$$
so $\pi_0(\cS)$ is the probability that $\cS$ is pre-present in $I$, $\pi_1(\cS)$ is the probability that $\cS$ is present in $I$, and $\pi_c(\cS)$ is the probability that $\cS$ is present in $I$, conditioned on the event that $\cS$ is pre-present in $I$. Let us also define 
\begin{equation}\label{def:ChatY}
\hC(Y) = \sum_{\cS}  \1\big[ \cS\text{ is pre-present in } Y \big] \cdot \pi_c(\cS),
\end{equation}
and make the following simple but key observation. 

\begin{obs}\label{obs:CChat}
$C(Y) \le \hC(Y)$ for every possible\/ $Y \subset [N]$. 
\end{obs}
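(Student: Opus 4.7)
The plan is to injectively associate to each complex $j \in Y^\cc$ a star-cluster $\cS_j = (j,T_j)$ that is pre-present in $Y$ and satisfies $\pi_c(\cS_j) = \Pr(A_j')$. Since distinct complex $j$'s give star-clusters with distinct centres, summing will give $C(Y) \le \hC(Y)$ directly from the definitions \eqref{Cdef} and \eqref{def:ChatY}.

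The construction proceeds as follows. Fix a complex $j$, and let $T_0 = \{ i \in Y : i \sim j\}$, so $|T_0| \ge 2$ by definition of complex. Observe two facts. First, $R(T_0) \cap E_j = R(Y) \cap E_j$: the inclusion $\subset$ is immediate since $T_0 \subset Y$, while the reverse holds because any element $x \in R(Y) \cap E_j$ lies in some $E_i$ with $i \in Y$, and then $i \sim j$ so $i \in T_0$. Second, $E_j \not\subset R(T_0)$: this follows from $E_j \not\subset R(Y)$ (which holds since $Y$ is possible) together with $R(T_0) \subset R(Y)$.

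Next, reduce $T_0$ greedily: while the current set $T$ has $|T| \ge 3$ and there exists $i \in T$ with $E_i \cap E_j \subset R(T \setminus \{i\})$, remove such an $i$. The removal step preserves the invariant $R(T) \cap E_j = R(T_0) \cap E_j = R(Y) \cap E_j$, and the process terminates at some set $T_j \subset T_0 \subset Y$ with $|T_j| \ge 2$. If $|T_j| \ge 3$ then no further removal is possible, so $E_i \cap E_j \not\subset R(T_j \setminus \{i\})$ for every $i \in T_j$; if $|T_j| = 2$ this minimality condition is vacuous. Combined with $E_j \not\subset R(T_j)$ (which follows from $R(T_j) \cap E_j = R(Y) \cap E_j$ and $E_j \not\subset R(Y)$), this shows that $\cS_j := (j, T_j)$ satisfies Definition~\ref{defsc}, i.e., $\cS_j$ is a star-cluster. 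Since $T_j \subset Y$, $\cS_j$ is pre-present in $Y$, and the equality $R(T_j) \cap E_j = R(Y) \cap E_j$ gives
\[
\pi_c(\cS_j) = p^{|E_j \setminus R(T_j)|} = p^{|E_j \setminus R(Y)|} = \Pr(A_j').
\]
The map $j \mapsto \cS_j$ is injective (the centre of a star-cluster is determined by $\cS$), so summing over complex $j$ yields $C(Y) \le \hC(Y)$.

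The only mild subtlety is making sure the greedy reduction halts at $|T_j| \ge 2$ rather than collapsing further, but this is built into the stopping rule; everything else is a direct unpacking of the definitions.
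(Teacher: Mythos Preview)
Your proof is correct and follows essentially the same approach as the paper: for each complex $j$ you produce a star-cluster $(j,T_j)$ with $T_j \subset Y$, $|T_j|\ge 2$, and $E_j \cap R(T_j) = E_j \cap R(Y)$, then sum. The only cosmetic difference is that the paper simply takes $T$ to be a minimal subset of $Y$ with $E_j \cap R(Y) \subset R(T)$ and $|T|\ge 2$, whereas you reach the same object by greedy reduction from $T_0=\{i\in Y : i\sim j\}$.
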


\begin{proof}
Fix $Y \subset [N]$, let $j$ be a complex term, and let $T$ be a minimal subset of $Y$ such that $E_j \cap R(Y) \subset R(T)$ and $|T| \ge 2$. Then $\cS = (j,T)$ is a star-cluster, since $E_j \not\subset R(Y)$ (because $Y$ is possible and $i \in Y^\cc$), and if $|T| \ge 3$, then $E_i \cap E_j \not\subset R(T \setminus \{i\})$ for every $i \in T$, by the minimality of $T$. Moreover, $\cS$ is pre-present in $Y$, since $T \subset Y$. Since $\Pr(A_j') = \pi_c(\cS)$, it follows that
$$C(Y) = \sum_{j\text{ complex}} \Pr(A_j') \le \sum_{\cS} \1\big[ \cS\text{ is pre-present in } Y \big] \cdot \pi_c(\cS) = \hC(Y),$$
as claimed.
\end{proof}

We can now easily deduce the claimed bound on $\Ex\big[ C(I) \big]$. 

\begin{proof}[Proof of Lemma~\ref{CYbd1}.]
Observe first that
\[
 \Ex\big[ \hC(I) \big] = \sum_{\cS} \pi_0(\cS)\pi_c(\cS) = \sum_{\cS} \pi_1(\cS),
\]
and that each $\cS$ corresponds to a $3$-cluster if $s = 2$, and a cluster of size between $3$ and $s$ if $s \ge 3$. By Observation~\ref{obs:CChat}, and since $s = O(1)$, it follows that
\[
\Ex\big[ C(I) \big] \le \Ex\big[ \hC(I) \big] = \sum_{\cS} \pi_1(\cS) = O\big( \Delta_3 + \cdots + \Delta_{s+1} \big),
\]
as claimed.
\end{proof}

At this point we only outline our second method for bounding $C(Y)$, since it is probably best illustrated by the concrete example in Section~\ref{ssGnp}. The idea is to consider a set $\cF$ of `forbidden configurations', which we will take to be certain types of cluster that are not likely to appear. Then we may define a `legal star-cluster' $(j,T)$ to be a star-cluster whose leaves do not form any configuration in $\cF$. In place of Observation~\ref{obs:CChat}, we obtain 
\[
  C(Y) \le \hC_{\cF}(Y) := \sum_{\cS\text{ legal}} \1\big[ \cS\text{ is pre-present in } Y \big] \cdot \pi_c(\cS)
\]
for every $\cF$-free possible set $Y \subset [N]$. 
If $I$ is with high probability $\cF$-free and $\Ex\big[ \hC_{\cF}(I) \big]$ is small, then we obtain good control on $C(I)$ with high probability.

\section{Cliques in $G(n,p)$}\label{secKr}

In this section we give two applications of our main results to cliques in $G(n,p)$, which in particular imply Theorems~\ref{thm:Kr:intro} and~\ref{thgen:intro}. Throughout this section, as in Subsections~\ref{sscl1} and~\ref{sscl2}, we consider copies of $K_r$ in $G(n,p)$, where 
$$3 \le r = r(n) \in \N \qquad \text{and} \qquad p = p(n) \in (0,1 - \eps)$$
for some constant $\eps > 0$. We will apply Theorems~\ref{thmain} and~\ref{th2way} to the ${r \choose 2}$-uniform hypergraph $\cH$ with vertex set $X = E(K_n)$ and $N = {n \choose r}$ edges $E_1,\ldots,E_N$ corresponding to the edge sets of copies of $K_r$. Observe that $X_p \sim E(G(n,p))$, $1 - \pi \ge 1 - p \ge \eps$, where $\pi = p^{r \choose 2}$, and that $\cH$ is symmetric (via permutations of the vertices of $K_n$), so $\cH$ satisfies the assumptions from the start of Section~\ref{ssu}.  

In addition to the ${r \choose 2}$-uniform hypergraph $\cH$, we will consider the (random) $r$-uniform hypergraph $H_r(G(n,p))$ that encodes the \emph{vertex sets} of the copies of $K_r$ in $G(n,p)$.\footnote{In the particular case of copies of $K_r$, it is more natural to record the set of vertices as a hyperedge, than the set of edges.} We will write $H$ for a possible outcome for $H_r(G(n,p))$ (so $H$ corresponds to the set $Y \subset [N]$), and emphasize that $H$ is now an $r$-uniform hypergraph with vertex set $V(K_n)$. For concreteness, let us write
\begin{equation}\label{def:YH}
Y(H) = \big\{ i \in [N] : V_i \in E(H) \big\},
\end{equation}
where $V_i$ is the vertex set of the copy of $K_r$ corresponding to $E_i \in E(\cH)$, and note that $Y(H)$ is possible if and only if $H = H_r(G)$ for some graph $G \subset K_n$. Let us also define 
\begin{equation}\label{def:CH:QiH}
C(H) = C(Y(H)) \qquad \text{and} \qquad Q_i(H) = Q_i(Y(H))
\end{equation}
for each $i \in \{2,3,4\}$, where $C(Y)$ and $Q_i(Y)$ were defined in~\eqref{Q2def},~\eqref{Q3def},~\eqref{Q4def} and~\eqref{Cdef}. Note that $t(H)$, defined in~\eqref{tdef}, is equal to $t(Y(H))$ (see~\eqref{tYdef}). Finally, recall that $\omega = \omega(n) \to \infty$ as $n \to \infty$ arbitrarily slowly.

\subsection{The case $r$ constant}\label{sec:constant:r:proof}

In this subsection we will prove Theorem~\ref{thm:Kr:intro}, and also (as promised in the introduction) a similar result when $r \in \{3,4\}$. Before stating it, let us first define the explicit family of `good' hypergraphs $\cG$ for which we will prove the claimed two-way bound. Recall that $\mu_r = {n \choose r} p^{r \choose 2}$.

\begin{definition}\label{defKrgood}
An $r$-uniform hypergraph $H$ with vertex set $V(K_n)$ is \emph{good} if $H = H_r(G)$ for some graph $G$, 
$$\big| e(H) - \mu_r \big| \le \omega \sqrt{\mu_r} \qquad \text{and} \qquad Q_i(H) \le \omega \cdot \Ex\big[ Q_i\big( H_r(G(n,p)) \big) \big]$$
for each $i \in \{2,3,4\}$, and moreover $C(H) \le \omega \cdot \Ex\big[ C\big( H_r(G(n,p)) \big) \big]$. 
\end{definition}

The following theorem immediately implies Theorem~\ref{thm:Kr:intro}. 

\begin{theorem}\label{thm:Kr}
For each $r \ge 3$, there exists $\gamma > 0$ such that if\/ $p \le n^{-2/r+\gamma}$ and $\mu_r \to \infty$, then 
\begin{equation}\label{eq:thm:Kr}
\Pr\big( H_r(G(n,p)) = H \big) = \pi^{e(H)} (1 - \pi)^{{n \choose r} - e(H)} p^{-t(H)} e^{- \Delta_2 +\Delta_2^0 + O(\omega \xi + n^{-\gamma})}
\end{equation}
for every good $H$, where
\begin{equation}\label{xidef}
\xi = \left\{ 
\begin{array}{crl}
n^5p^7 + \sqrt{n^5p^7} \quad & \text{if} \quad & r = 3 \\[+0.2ex]
n^3p^6 + n^8p^{16} \quad & \text{if} \quad & r = 4\\[+0.2ex]
\mu_{r+1} \quad & \text{if} \quad & r \ge 5.
\end{array} \right.
\end{equation}
Moreover, $H_r(G(n,p))$ is good with high probability.
\end{theorem}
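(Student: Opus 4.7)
The strategy is to apply Theorem~\ref{th2way} to the $\binom{r}{2}$-uniform hypergraph $\cH$ whose hyperedges are the edge sets of copies of $K_r$ in $K_n$, and then reduce the resulting error terms using the definition of goodness. To verify the hypotheses: $1-p = \Omega(1)$ is immediate from $p \le n^{-2/r+\gamma} = o(1)$ for $\gamma$ small; the symmetry of $\cH$ combined with Remark~\ref{rmk:phiD2D3} gives $\phi\Delta_2 = O(\Delta_3)$; a direct calculation $\Delta_2^0 = O(\mu \cdot n^{r-2}\pi) = o(\mu)$ shows $\Delta_2^0 \le \mu/4$; and $|Y(H)| = e(H) = O(\mu_r)$ for good $H$. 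A direct comparison of \eqref{def:Delta} with \eqref{Delta2def} and \eqref{def:Delta20} shows $\Delta_2 - \Delta_2^0 = \Lambda(n,r)$, so Theorem~\ref{th2way} applied with $Y = Y(H)$ yields
\begin{equation*}
\Pr(H_r(G(n,p))=H) = \pi^{e(H)}(1-\pi)^{\binom{n}{r}-e(H)} p^{-t(H)} \exp\!\bigg({-}\Big(\tfrac{2e(H)}{\mu_r}-1\Big)\Lambda(n,r) + \eta'(Y(H))\bigg).
\end{equation*}

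For good $H$, the bound $|e(H) - \mu_r| \le \omega\sqrt{\mu_r}$ gives $2e(H)/\mu_r - 1 = 1 + O(\omega/\sqrt{\mu_r})$, so the main term of the exponent is $-\Lambda(n,r)$ up to an additional error $O(\omega\Lambda(n,r)/\sqrt{\mu_r})$. Goodness together with Lemma~\ref{lem:ExQi} bounds $\phi Q_2(H) + Q_3(H) + Q_4(H) = O(\omega(\Delta_3 + \Delta_4))$ (using $\phi\Delta_2 = O(\Delta_3)$), and Lemma~\ref{CYbd1}, applicable since $s = \binom{r}{2}$ is constant, gives $C(H) = O\big(\omega(\Delta_3 + \cdots + \Delta_{\binom{r}{2}+1})\big)$. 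So the total error in the exponent is
\[
O\!\Big(\omega\sum_{k=3}^{\binom{r}{2}+1}\Delta_k \,+\, \pi^2 N \,+\, \tfrac{\omega\Lambda(n,r)}{\sqrt{\mu_r}}\Big).
\]

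The remaining task is to show this is $O(\omega\xi + n^{-\gamma})$ for each $r$. For $r \ge 5$, every $k$-cluster of $K_r$'s with $k \ge 3$ contains at least $r+1$ vertices, and an enumeration of cluster shapes (together with the range $p \le n^{-2/r+\gamma}$, which makes denser or larger shapes strictly less likely) shows that $\Delta_k = O(\mu_{r+1})$ for each of the finitely many relevant $k$, with the dominant contribution being substructures of $K_{r+1}$. Moreover $\pi^2 N = \Theta(\mu_r^2/n^r)$ and $\omega\Lambda(n,r)/\sqrt{\mu_r}$ are both $O(n^{-\gamma'})$ when $\gamma$ is sufficiently small, yielding the claimed $O(\omega\mu_{r+1} + n^{-\gamma})$. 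For $r = 3$ and $r = 4$ the enumeration is more delicate, since these small values of $r$ admit cluster shapes not contained in $K_{r+1}$; one must classify every such shape and compute its expected count, obtaining $n^5p^7 + \sqrt{n^5p^7}$ and $n^3p^6 + n^8p^{16}$ respectively. The $\sqrt{n^5p^7}$ in the $r=3$ case arises precisely from the prefactor error $\omega\Lambda(n,3)/\sqrt{\mu_3}$, which dominates the cluster terms in part of the relevant $p$-range.

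Finally, to show $H_r(G(n,p))$ is good with high probability, I use Chebyshev's inequality for $e(H)$ (whose variance is $O(\mu_r)$ by \eqref{eq:easy:variance:I} once $\Delta_2 = O(\mu_r)$ is verified in the relevant range of $p$), and Markov's inequality for each of $Q_i(H)$ and $C(H)$, whose expectations are $O(\Delta_i)$ and $O\big(\sum_k \Delta_k\big)$ by Lemmas~\ref{lem:ExQi} and~\ref{CYbd1}. The main obstacle is the cluster enumeration for $r \in \{3,4\}$: one must exhaustively classify cluster shapes of $K_3$'s and $K_4$'s and track the precise exponents contributed by each, which is combinatorially intricate. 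For $r \ge 5$ the argument is noticeably cleaner because all small clusters of $K_r$ are substructures of $K_{r+1}$, making $\mu_{r+1}$ the universal bound.
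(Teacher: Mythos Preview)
Your proposal is correct and follows essentially the same route as the paper: apply Theorem~\ref{th2way}, use goodness plus Lemmas~\ref{lem:ExQi} and~\ref{CYbd1} to control $\eta'(Y(H))$, and then bound the resulting $\Delta_k$'s, $\pi^2 N$, and $\Delta_2/\sqrt{\mu_r}$ case by case in $r$; you even correctly trace the $\sqrt{n^5p^7}$ term to $\Lambda(n,3)/\sqrt{\mu_3}$. The one organizational difference is that the paper isolates the inequality $\Delta_k = O(\Delta_{k-1})$ (Lemma~\ref{DOD}) as a separate step, reducing the entire cluster analysis to bounding just $\Delta_3$ (Lemma~\ref{lcKrclust}), whereas you propose to bound each $\Delta_k$ up to $k=\binom{r}{2}+1$ directly; the paper's chaining is cleaner and avoids any need to enumerate larger cluster shapes.
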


To avoid repetition, let us fix $r \ge 3$ and $p = p(n) \le n^{-2/r+\gamma}$ throughout this subsection, where $\gamma > 0$ is sufficiently small, and let $\cG$ be the collection of good $r$-uniform hypergraphs with vertex set $V(K_n)$. Let us begin with the easiest task: showing that $H_r(G(n,p))$ is good with high probability. 

\begin{lemma}\label{lem:Hgood:whp}
$H_r(G(n,p))$ is good with high probability.
\end{lemma}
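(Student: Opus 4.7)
The plan is to verify each of the four numerical conditions in Definition~\ref{defKrgood} and then conclude by a union bound. (The condition that $H = H_r(G)$ for some graph $G$ is automatic since $H = H_r(G(n,p))$.) The four remaining conditions are the concentration of $e(H)$ about $\mu_r$ and upper bounds on the nonnegative quantities $Q_2(H)$, $Q_3(H)$, $Q_4(H)$, and $C(H)$.

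For concentration of $e(H_r(G(n,p))) = |I|$ about $\mu_r$, I would apply Chebyshev's inequality using the standard variance bound $\Var(|I|) \le \mu_r + 2\Delta_2$ recalled in~\eqref{eq:easy:variance:I}. The key input is that $\Delta_2 = O(\mu_r)$ throughout the regime $p \le n^{-2/r+\gamma}$. This follows from a term-by-term estimate in the definition~\eqref{def:Delta}: the ratio of the $s$-th summand to $\mu_r$ is of order $n^{r-s} p^{\binom{r}{2}-\binom{s}{2}}$, and at $p = n^{-2/r}$ this simplifies to $n^{1-s+s(s-1)/r}$, which is maximised at the endpoints $s = 2$ and $s = r-1$ where it equals $n^{-1+2/r}$. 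For $r \ge 3$ this is $o(1)$, so for $\gamma = \gamma(r) > 0$ sufficiently small each summand is $o(\mu_r)$ uniformly in $s$, giving $\Var(|I|) = O(\mu_r)$. Chebyshev then yields
\[
\Pr\bigl(|e(H_r(G(n,p))) - \mu_r| > \omega \sqrt{\mu_r}\bigr) = O(\omega^{-2}) = o(1),
\]
since $\omega \to \infty$.

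For each of $Q_2(H)$, $Q_3(H)$, $Q_4(H)$, and $C(H)$ I would apply Markov's inequality directly: each is a nonnegative random variable when evaluated at $H = H_r(G(n,p))$, so
\[
\Pr\bigl(Q_i(H_r(G(n,p))) > \omega \cdot \E[Q_i(H_r(G(n,p)))]\bigr) \le 1/\omega = o(1),
\]
and similarly for $C$. Combining these four bounds with the Chebyshev estimate above via a union bound shows that $H_r(G(n,p))$ satisfies all the conditions in Definition~\ref{defKrgood} with probability $1 - o(1)$.

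The main (and really only) technical obstacle is the variance estimate $\Delta_2 = O(\mu_r)$ in the regime $p \le n^{-2/r+\gamma}$; this is what forces $\gamma$ to be chosen sufficiently small (depending on $r$), so that even after perturbing $p$ away from $n^{-2/r}$ every intersection-size contribution to $\Delta_2$ stays $o(\mu_r)$. Everything else is a routine application of Chebyshev and Markov, with no further combinatorial input required.
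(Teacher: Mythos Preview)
Your proposal is correct and matches the paper's proof essentially line for line: Markov's inequality handles $Q_2,Q_3,Q_4,C$, while Chebyshev's inequality plus the variance bound $\mu_r + 2\Delta_2$ handles $e(H)$, with the only real work being the estimate $\Delta_2 = O(\mu_r)$ via the convexity (endpoint) argument you give. The paper presents the $\Delta_2/\mu_r$ bound slightly more compactly as $O(n^{r-2}p^{\binom{r}{2}-1} + np^{r-1}) = O(n^{2/r-1+O(\gamma)})$, but the content is identical.
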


\begin{proof}
The bounds on $C(H)$ and $Q_i(H)$ hold with high probability by Markov's inequality, so it will suffice to prove the bound on $e(H)$. To do so, we will first show that $\Delta_2 = O(\mu_r)$. Indeed, we have
\begin{equation}\label{eq:Delta2:vs:mu}
 \frac{\Delta_2}{\mu_r} = \sum_{t = 2}^{r-1} O\big( n^{r-t} p^{\binom{r}{2}-\binom{t}{2}} \big) = O\big( n^{r-2} p^{\binom{r}{2} - 1} + np^{r-1} \big) = O\big( n^{2/r - 1 + O(\gamma)}\big),
\end{equation}
where the second step holds by convexity, and the third by our bound on $p$, which implies that $n^{r-2}p^{\binom{r}{2}-1} \le n^{2/r-1+O(\gamma)}$ and $np^{r-1} \le n^{2/r - 1 + O(\gamma)}$. Since $r \ge 3$, it follows from~\eqref{eq:Delta2:vs:mu} that if $\gamma$ is sufficiently small, then $\Delta_2 = O(\mu_r)$, as claimed. Now, recalling that $e(H_r(G(n,p)))$ has expectation $\mu_r$ and variance at most $\mu_r + 2\Delta_2$, by~\eqref{eq:easy:variance:I}, the lemma follows by Chebyshev's inequality.
\end{proof}

We will deduce Theorem~\ref{thm:Kr} from Theorem~\ref{th2way}; our only tasks will be to verify that the conditions of Theorem~\ref{th2way} are satisfied, and that the bound given by the theorem implies~\eqref{eq:thm:Kr} for all $H \in \cG$. To check that the conditions of Theorem~\ref{th2way} are satisfied, note first that
\begin{equation}\label{eq:th2way:conditions:check}
\phi \le p = o(1) \qquad \text{and} \qquad \Delta_2^0 = O\bigg( \frac{\mu_r^2}{n^2} \bigg) = o(\mu_r),
\end{equation}
since $\mu_r \le n^{r} p^{\binom{r}{2}} \le n^{1+O(\gamma)} = o(n^2)$ if $\gamma > 0$ is sufficiently small. Observe also that $\phi \Delta_2 = O(\Delta_3)$ holds by Remark~\ref{rmk:phiD2D3}. We may therefore apply Theorem~\ref{th2way} to obtain the following bound. 

\begin{lemma}\label{lem:bound1}
For every $H \in \cG$, we have
$$\Pr\big( H_r(G(n,p)) = H \big) = \pi^{e(H)} (1 - \pi)^{{n \choose r} - e(H)} p^{-t(H)} e^{- ( \frac{2e(H)}{\mu_r} - 1 ) ( \Delta_2 - \Delta_2^0 ) + \eta'(H)},$$
where
$$\eta'(H) = O\Big( \Delta_3 + \phi \cdot Q_2(H) + Q_3(H) + Q_4(H) + \pi^2 N + C(H) \Big).$$
\end{lemma}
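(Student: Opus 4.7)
The plan is to deduce Lemma~\ref{lem:bound1} as an essentially direct application of Theorem~\ref{th2way} to the ${r\choose 2}$-uniform hypergraph $\cH$ introduced at the start of Section~\ref{secKr}, via the dictionary $Y \leftrightarrow Y(H)$ in~\eqref{def:YH}, so that the event $\{I = Y(H)\}$ in the abstract setting is literally the event $\{H_r(G(n,p)) = H\}$. Under this correspondence we have $|Y(H)| = e(H)$, $\mu = \mu_r$, $N = \binom{n}{r}$, $t(Y(H)) = t(H)$ (noted in the paragraph before the lemma), $Q_i(Y(H)) = Q_i(H)$, and $C(Y(H)) = C(H)$, by~\eqref{def:CH:QiH}. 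Consequently the conclusion of Theorem~\ref{th2way} translates term by term into the formula in Lemma~\ref{lem:bound1}.

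My only real task is therefore to check the hypotheses of Theorem~\ref{th2way}. The condition $1 - p = \Omega(1)$ is immediate from the standing assumption $p \in (0, 1 - \eps)$. The condition $\Delta_2^0 \le \mu_r/4$ follows from the bound $\Delta_2^0 = O(\mu_r^2/n^2) = o(\mu_r)$ recorded in~\eqref{eq:th2way:conditions:check}, which is valid provided $\gamma > 0$ is chosen small enough, together with $\mu_r \to \infty$. The condition $\phi \Delta_2 = O(\Delta_3)$ was verified by the general argument in Remark~\ref{rmk:phiD2D3} (which applies here since for any pair of $r$-sets intersecting in some fixed size, the symmetry group acting on $V(K_n)$ supplies another intersecting $r$-set giving the required symmetric contribution to $\Delta_3$).

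Next I need to verify the pointwise conditions on $Y(H)$ required by Theorem~\ref{th2way}. Since $H$ is good, by Definition~\ref{defKrgood} we have $H = H_r(G)$ for some graph $G$, which exactly says that $Y(H)$ is \emph{possible} in the sense preceding Theorem~\ref{th2way} (there is an underlying outcome of $X_p$ realising the prescribed set of copies and forbidding all others). Similarly, goodness gives $|e(H) - \mu_r| \le \omega\sqrt{\mu_r}$ with $\omega$ growing arbitrarily slowly, so $e(H) = \mu_r + o(\mu_r) = O(\mu_r)$, i.e.\ $|Y(H)| = O(\mu)$, as required.

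With all hypotheses verified, Theorem~\ref{th2way} applied to $Y = Y(H)$ yields the stated identity. There is no real obstacle in this argument: the substantive work was done in Theorem~\ref{th2way} and in the preparatory estimates~\eqref{eq:th2way:conditions:check} and Remark~\ref{rmk:phiD2D3}; the role of this lemma is purely to translate the abstract statement into the language of $K_r$ in $G(n,p)$ for later use (in particular, the forthcoming estimates on $\Delta_2 - \Delta_2^0$, $\Delta_3$, $\pi^2 N$, and the $Q_i$ and $C$ terms, which will be the genuinely delicate computations carried out in the remainder of Section~\ref{secKr} and which produce the unified error $\xi$ in Theorem~\ref{thm:Kr}).
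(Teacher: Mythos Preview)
Your proposal is correct and follows essentially the same route as the paper: verify the global hypotheses of Theorem~\ref{th2way} via~\eqref{eq:th2way:conditions:check} and Remark~\ref{rmk:phiD2D3}, then check that goodness of $H$ gives possibility of $Y(H)$ and $|Y(H)|=O(\mu_r)$, and translate the conclusion through the dictionary $Y\leftrightarrow Y(H)$. The only cosmetic difference is that the paper makes explicit the choice $\omega\le\sqrt{\mu_r}$ (which you implicitly use when deducing $e(H)=O(\mu_r)$ from $|e(H)-\mu_r|\le\omega\sqrt{\mu_r}$).
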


\begin{proof}
We have already verified the general conditions of Theorem~\ref{th2way}. To check that the conclusion holds for the hypergraph $H \in \cG$, observe first that the set $Y(H)$ is possible, since $H = H_r(G)$ for some graph $G$. Moreover, if we choose $\omega = \omega(n)$ so that $\mu_r \ge \omega^2$, then $H \in \cG$ implies that $|Y(H)| = e(H) = O(\mu_r)$. The bound in Theorem~\ref{th2way} therefore holds for $Y(H)$, as claimed.
\end{proof}

Our next aim is to bound the error term $\eta'(H)$ for all $H \in \cG$. We start by comparing $\Delta_k$ for different $k$.

\begin{lemma}\label{DOD}
If\/ $2 \le k = O(1)$, then $\Delta_k = O(\Delta_{k-1})$, where $\Delta_1 = \mu_r$.
\end{lemma}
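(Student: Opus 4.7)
My plan is to prove the lemma by a ``remove a leaf'' reduction. Given any $k$-cluster $S$, its intersection graph (with vertex set $S$ and edges $\{i,j\}$ whenever $i\sim j$) is connected, so any spanning tree has at least one leaf $i^*\in S$; removing $i^*$ yields a $(k-1)$-cluster $S' = S \setminus \{i^*\}$ (interpreting a $1$-cluster as a single index, so that $\sum_{S'}\Pr(A_{S'}) = N\pi = \mu_r = \Delta_1$ in the base case). Choosing $i^*$ canonically (say as the smallest-index non-cut-vertex), this gives an injection from $k$-clusters to pairs $(S',j)$ where $S'$ is a $(k-1)$-cluster and $j\notin S'$ satisfies $j\sim i$ for some $i\in S'$. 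Hence
\[
 \Delta_k \,\le\, \sum_{S'} \sum_{j \,:\, \exists i\in S',\, j\sim i} \Pr(A_{S'\cup\{j\}}) \,=\, \sum_{S'} \Pr(A_{S'}) \sum_j \Pr(A_j \mid A_{S'}).
\]
It therefore suffices to show that the inner sum is $O(1)$ uniformly over all $(k-1)$-clusters $S'$.

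To bound the inner sum I would fix $S'$ and split according to $t = |V_j \cap V(S')|\in \{2,\dots,r\}$, where $V_j$ is the vertex set of the clique indexed by $j$ and $V(S') = \bigcup_{i\in S'} V_i$. Since $|V(S')| \le (k-1)r = O(1)$, the number of possible $V_j$ with a prescribed value of $t$ is at most $\binom{|V(S')|}{t}\binom{n}{r-t} = O(n^{r-t})$. For the conditional probability, the key observation is that among the $\binom{r}{2}$ edges of $V_j$, at least $\binom{r}{2}-\binom{t}{2}$ of them involve at least one vertex outside $V(S')$; these edges are jointly independent of $A_{S'}$ and must all be present, while the remaining edges (all inside $V(S')$) have conditional probability at most $1$. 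This gives $\Pr(A_j \mid A_{S'}) \le p^{\binom{r}{2}-\binom{t}{2}}$, and hence
\[
 \sum_j \Pr(A_j \mid A_{S'}) \,\le\, \sum_{t=2}^{r} O\!\left( n^{r-t}\, p^{\binom{r}{2} - \binom{t}{2}} \right).
\]

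The term $t=r$ contributes $O(1)$ trivially. For $2 \le t \le r-1$, writing $j' = r - t \in \{1,\dots,r-2\}$, a short computation with $p \le n^{-2/r+\gamma}$ shows the exponent of $n$ in the $t$-th term simplifies to $-j'(r-1-j')/r + O(\gamma)$; since $j'(r-1-j') \ge 1$ whenever $r \ge 3$ and $1 \le j' \le r-2$, this is at most $-1/r + O(\gamma)$, which is negative for sufficiently small $\gamma$. Thus each of the $r-2=O(1)$ sub-terms is $o(1)$, and the total is $O(1)$. This exponent calculation is essentially the same one already carried out in the proof of Lemma~\ref{lem:Hgood:whp} to show $\Delta_2/\mu_r = o(1)$, so the main ``obstacle'' is just organizing the bookkeeping and confirming that implicit constants (which depend on $k$ and $r$, both assumed to be $O(1)$) are absorbed into the $O(1)$; no genuinely new idea is needed beyond the remove-a-leaf reduction.
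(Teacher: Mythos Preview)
Your proof is correct and follows essentially the same approach as the paper: both remove a leaf from the cluster to reduce a $k$-cluster to a $(k{-}1)$-cluster plus one additional clique, then bound the inner sum by splitting on the number $t$ of vertices the new clique shares with the existing cluster, obtaining $\sum_{t=2}^r O\big(n^{r-t}p^{\binom{r}{2}-\binom{t}{2}}\big) = O(1)$. The only cosmetic differences are that the paper phrases the reduction as ``a $k$-cluster can be thought of (in at least one way) as arising from a $(k{-}1)$-cluster'' rather than via an explicit canonical leaf, and it bounds the sum over $t$ by convexity (reducing to the endpoints $t=2$ and $t=r$) rather than by your direct exponent computation.
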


\begin{proof}
A $k$-cluster can be thought of (in at least one way) as arising from a $(k-1)$-cluster $\cC$ by adding a new copy $E_i$ of $K_r$ sharing some number $t \ge 2$ of vertices with $\cC$. Given $\cC$, there are at most $O(1)$ choices for the shared vertices, and at most $n^{r-t}$ choices for the new vertices in $\cC$. The conditional probability (given that $\cC$ is present) that $E_i$ is also present is then at most $p^{\binom{r}{2}-\binom{t}{2}}$, since $E_i$ can share at most $\binom{t}{2}$ edges with $\cC$. It follows that
\[
\frac{\Delta_k}{\Delta_{k-1}} = \sum_{t = 2}^r O\big( n^{r-t} p^{\binom{r}{2}-\binom{t}{2}} \big) = O\big( n^{r-2} p^{\binom{r}{2} - 1} + 1 \big) = O(1),
\]
as claimed, where the second step holds by convexity, and the final step holds because $n^{r-2}p^{\binom{r}{2}-1} \le n^{2/r-1+O(\gamma)} \le 1$ if $\gamma$ is sufficiently small.
\end{proof}

We can now bound most of the terms in $\eta'(H)$.

\begin{lemma}\label{lem:Qbounds:rconstant}
$$\phi \cdot Q_2(H) + Q_3(H) + Q_4(H) = O( \omega \Delta_3 )$$ 
for every $H \in \cG$.
\end{lemma}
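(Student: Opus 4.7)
The plan is to chain together three facts that have all been established earlier in the excerpt. Since every $H \in \cG$ is good, Definition~\ref{defKrgood} gives us the bound
\[
 Q_i(H) \le \omega \cdot \Ex\big[ Q_i(H_r(G(n,p))) \big]
\]
for $i \in \{2,3,4\}$. By Lemma~\ref{lem:ExQi}, each of these expectations is $O(\Delta_i)$, so $Q_i(H) = O(\omega \Delta_i)$.

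Next I would use Lemma~\ref{DOD}, which was just proved: applying it with $k = 4$ gives $\Delta_4 = O(\Delta_3)$, so immediately $Q_4(H) = O(\omega \Delta_3)$; the term $Q_3(H)$ is already $O(\omega \Delta_3)$ without any further work. For the first term, $\phi \cdot Q_2(H) = O(\omega \phi \Delta_2)$, and we have already checked (using Remark~\ref{rmk:phiD2D3}, noted right before Lemma~\ref{lem:bound1}) that $\phi \Delta_2 = O(\Delta_3)$ in the present setting. Hence $\phi \cdot Q_2(H) = O(\omega \Delta_3)$ as well.

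Summing the three contributions yields $\phi \cdot Q_2(H) + Q_3(H) + Q_4(H) = O(\omega \Delta_3)$, as required. There is no real obstacle here; the lemma is essentially a bookkeeping step that packages the ``good'' bounds on $Q_2,Q_3,Q_4$ together with the hypothesis $\phi\Delta_2=O(\Delta_3)$ and the comparison $\Delta_4=O(\Delta_3)$ into a single clean bound that will feed into the estimate of $\eta'(H)$.
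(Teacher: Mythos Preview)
Your proof is correct and follows essentially the same approach as the paper's: use Definition~\ref{defKrgood} and Lemma~\ref{lem:ExQi} to get $Q_i(H)=O(\omega\Delta_i)$, then invoke $\phi\Delta_2=O(\Delta_3)$ (from Remark~\ref{rmk:phiD2D3}) and $\Delta_4=O(\Delta_3)$ (from Lemma~\ref{DOD}) to reduce everything to $O(\omega\Delta_3)$.
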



\begin{proof}
By Lemma~\ref{lem:ExQi} and Definition~\ref{defKrgood}, we have 
$$Q_i(H) \le \omega \cdot \Ex\big[ Q_i\big( H_r(G(n,p)) \big) \big] = O( \omega \Delta_i)$$ 
for each $i \in \{2,3,4\}$. Since (as noted above) $\phi \Delta_2 = O(\Delta_3)$ and $\Delta_4 = O(\Delta_3)$, by Lemma~\ref{DOD}, the claimed bound follows.
\end{proof}

In order to obtain the required bound on $\eta'(H)$, it will now suffice to show that $\Delta_3 = O(\xi)$, where $\xi$ was defined in~\eqref{xidef}, since by Lemmas~\ref{CYbd1} and~\ref{DOD} this will imply the same bound for the expectation of $C(H_r(G(n,p)))$. 

In the case $r \ge 5$, we will actually obtain the following stronger and more general result, with essentially no extra effort. Recall that $r \in \N$ is fixed and $p \le n^{-2/r+\gamma}$ for some small $\gamma > 0$. 

\begin{lemma}\label{Krclust}
If $r \ge 5$ and $\cC$ is a $k$-cluster for some $3 \le k = O(1)$, then the expected number of copies of\/ $\cC$ present in $G(n,p)$ is $O(\mu_{r+1})$ if\/ $\cC$ has exactly $r+1$ vertices, and is $n^{-\Omega(1)}$ otherwise.
\end{lemma}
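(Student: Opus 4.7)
The expected number of copies of $\cC$ in $G(n,p)$ is at most $C \cdot n^{v(F)} p^{e(F)}$, where $F$ is the union graph of the $k$ copies of $K_r$ comprising $\cC$ and the constant $C$ depends only on $\cC$. Since the $k \ge 3$ copies are pairwise distinct, $v(F) \ge r + 1$, and the plan is to split the argument into the cases $v(F) = r + 1$ and $v(F) \ge r + 2$.

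For $v(F) = r + 1$, each copy omits exactly one of the $r + 1$ vertices of $V(F)$, and distinct copies omit distinct vertices. Therefore, for any pair $\{u, w\} \subset V(F)$, at most two of the $k \ge 3$ copies omit a vertex in $\{u, w\}$, so some copy contains both $u$ and $w$. This forces $F = K_{r+1}$, and hence $n^{v(F)} p^{e(F)} = \mu_{r+1}$.

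For $v(F) \ge r + 2$, the key is to establish the combinatorial inequality $2e(F) - r \cdot v(F) \ge 1$; combined with $p \le n^{-2/r + \gamma}$ and $e(F) \le k\binom{r}{2} = O(1)$, this gives $n^{v(F)} p^{e(F)} \le n^{-1/r + O(\gamma)} = n^{-\Omega(1)}$ provided $\gamma$ is small enough in terms of $k$ and $r$. The plan is to prove $2e - rv \ge 1$ by induction on $k \ge 3$. For the inductive step (with $k \ge 4$), I would remove a copy $K^{(k)}$ that is a leaf in a spanning tree of the $\sim$-graph on the cluster, leaving a $(k-1)$-cluster with union $F'$. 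Setting $s = |V(K^{(k)}) \cap V(F')| \in \{2, \ldots, r\}$ (where $s \ge 2$ since $K^{(k)}$ shares an edge with its parent in the spanning tree), the edges of $K^{(k)}$ that touch a vertex outside $V(F')$ are all new, so $e(F) - e(F') \ge \binom{r}{2} - \binom{s}{2}$ and $v(F) - v(F') = r - s$. A short calculation yields
\[
2\bb{e(F) - e(F')} - r\bb{v(F) - v(F')} \;\ge\; (r - s)(s - 1),
\]
which is $\ge 1$ for $s \in \{2, \ldots, r - 1\}$ and $\ge 0$ for $s = r$. Applying the inductive hypothesis to $F'$ (with $2e(F') - rv(F') = 0$ in the case $v(F') = r + 1$, since then $F' = K_{r+1}$ by the previous paragraph applied to the $(k-1) \ge 3$ copies), and observing that $s = r$ together with $v(F') = r + 1$ would force $v(F) = r + 1$, contrary to the assumption, closes the inductive step.

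The main obstacle is the base case $k = 3$, which I would handle directly using the inclusion-exclusion identity
\[
2e - rv \;=\; -3r + \sum_{1 \le i < j \le 3} s_{ij}(r - s_{ij} + 1) - t(r - t + 1),
\]
where $s_{ij} = |V_i \cap V_j|$ and $t = |V_1 \cap V_2 \cap V_3|$, by minimizing over realizable tuples. The relevant constraints are connectedness of the cluster (at least two $s_{ij} \ge 2$), distinctness of copies ($s_{ij} \le r - 1$), non-negativity of the Venn diagram regions ($t \le \min(s_{12}, s_{13}, s_{23})$ and $s_{ij} + s_{ik} \le r + t$), and $v \ne r + 1$. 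A careful case analysis shows the minimum equals $r - 4$, attained for instance at $(s_{12}, s_{13}, s_{23}, t) = (r-1, r-1, r-1, r-1)$ and $(2,2,2,2)$; this is $\ge 1$ precisely when $r \ge 5$, matching the hypothesis of the lemma. Excluding tuples with $v = r + 1$ is essential, since such tuples give $2e - rv = 0$ and correspond to $F = K_{r+1}$.
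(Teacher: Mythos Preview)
Your approach is correct and genuinely different from the paper's. You reduce the problem to the combinatorial inequality $2e(F)-rv(F)\ge 1$ whenever $v(F)\ge r+2$, then prove this by induction on $k$ with a base case $k=3$. The paper instead builds the cluster sequentially as $E_1,\ldots,E_k$ (each sharing an edge with the union of earlier ones), tracks the expected count $N_i$, and observes that each \emph{ordinary} step (one introducing a new vertex) multiplies $N_i$ by $O(n^{2/r-1+O(\gamma)})$. Two ordinary steps already give $N_k=O(\mu_r\cdot n^{4/r-2+O(\gamma)})=n^{-\Omega(1)}$ for $r\ge 5$; the residual case of exactly one ordinary step is dispatched by a short direct argument (either $|E_1\cap E_2|=r-1$, forcing $K_{r+1}$, or $E_3$ contributes at least two fresh edges). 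In effect the paper is doing your induction too, but with a different case split that sidesteps your base case entirely: the paper's ``$\ge 2$ ordinary steps'' gives the same gain $2\Delta e-r\Delta v\ge (r-t)(t-1)\ge r-2$ per ordinary step, yielding $2e-rv\ge -r+2(r-2)=r-4$ directly.

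The one place where your plan is thin is precisely that base case. The inclusion--exclusion identity you state is correct, and your claimed minimizers do give $r-4$, but establishing that $r-4$ is the \emph{minimum} over all realizable tuples with $v\ne r+1$ is where the work lies. The constraints (two of the $s_{ij}\ge 2$, each $s_{ij}\le r-1$, $t\le\min s_{ij}$, $s_{ij}+s_{ik}\le r+t$, and $\sum s_{ij}-t\ne 2r-1$) interact in a way that makes the verification a genuine case analysis; for $r=5$ it already requires checking that the unique tuple achieving value $0$, namely $(4,4,4,3)$, has $v=6=r+1$ and is therefore excluded. This is all doable, but you should be aware that the paper's route avoids this altogether by handling the ``one ordinary step'' case separately rather than via a global minimization.
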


\begin{proof}
We list the hyperedges/copies of $K_r$ in $\cC$ in an order $E_1,\ldots,E_k$ such that each after the first shares at least one edge of $K_n$ with the union of the previous ones. For each $i \in [k]$, let $\cC_i$ be the $i$-cluster formed by the edges $E_1,\ldots,E_i$, and let $N_i$ denote the expected number of labelled copies of $\cC_i$ present in $G(n,p)$. Thus $N_1 = \mu_r$, and 
$$N_2 = O\big( \mu_r \cdot n^{r - t} p^{\binom{r}{2}-\binom{t}{2}} \big),$$ 
where $t$ is the number of vertices of $K_n$ that $E_1$ and $E_2$ share. Similarly, if $E_i$ shares $2 \le t \le r - 1$ vertices of $K_n$ with $E_1 \cup \cdots \cup E_{i-1}$, then from \eqref{eq:Delta2:vs:mu} we have
$$\frac{N_i}{N_{i-1}} = O\big( n^{r-t} p^{\binom{r}{2}-\binom{t}{2}} \big) = O\big( n^{2/r - 1 + O(\gamma)} \big),$$
while if $E_i$ shares $r$ vertices with $E_1 \cup \cdots \cup E_{i-1}$ then $N_i = O(N_{i-1})$, and if moreover $E_i$ contains $a$ new edges, then $N_i = O( p^a N_{i-1})$.

Let us say that step~$i$ is \emph{ordinary} if $E_i$ contains a vertex not in $E_1 \cup \cdots \cup E_{i-1}$. Note that step 2 is always ordinary; if there are at least two ordinary steps, then
$$N_k = O\big( \mu_r \cdot n^{4/r - 2 + O(\gamma)} \big) = O\big( n^{-1/5 + O(\gamma)} \big) = n^{-\Omega(1)},$$
since $\mu_r \le n^r p^{r \choose 2} \le n^{1 + O(\gamma)}$ and recalling that $r \ge 5$ and $\gamma$ is sufficiently small.

This leaves the case where $i = 2$ is the only ordinary step. Let $|E_1 \cup E_2| = 2r - t$, and suppose first that $t = r - 1$. Then there are $r+1$ vertices in total and (since $k \ge 3$ and the $E_i$ are distinct) $\cC$ must contain all of the edges between these vertices, so in this case $N_k = O(\mu_{r+1})$. On the other hand, if $t \le r - 2$, then $E_1 \cup E_2$ does not contain a copy of  $K_r^-$, so $E_3$ must contain at least two edges not in $E_1 \cup E_2$, and hence 
$$N_k = O(N_3) = O( p^2N_2 ) = O\big( p^2 \cdot \mu_r \cdot n^{2/r - 1 + O(\gamma)} \big) = n^{-\Omega(1)},$$ 
since $p \le n^{-2/r+\gamma}$, $\mu_r \le n^{1 + O(\gamma)}$ and $\gamma$ is sufficiently small.
\end{proof}

Before continuing, let's quickly note the following immediate corollary of Lemma~\ref{Krclust}. 

\begin{corollary}\label{newsmooth}
If $r \ge 5$ and $p \le n^{-2/r+\gamma}$, then with high probability every $k$-cluster of copies of $K_r$ in $G(n,p)$ with $k \ge 3$ forms a copy of $K_{r+1}$.
\end{corollary}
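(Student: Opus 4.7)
The plan is to derive Corollary~\ref{newsmooth} directly from Lemma~\ref{Krclust}, handling bounded and large cluster sizes separately and combining Markov's inequality with a trivial counting observation. Since $r$ is constant, every cluster size $k$ between $3$ and $r+2$ satisfies $k = O(1)$, so Lemma~\ref{Krclust} applies to each of them.

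First I would rule out clusters of size greater than $r+1$. Any $K_{r+1}$ in $K_n$ contains exactly $r+1$ distinct copies of $K_r$, so an $(r+2)$-cluster cannot have vertex set of size exactly $r+1$. Applying Lemma~\ref{Krclust} with $k = r+2$ therefore shows that the expected number of $(r+2)$-clusters of copies of $K_r$ in $G(n,p)$ is $n^{-\Omega(1)}$, and Markov's inequality gives that w.h.p.\ none is present. Since the intersection graph of a $k$-cluster is connected, any $k$-cluster with $k \ge r+2$ contains a connected subset of size exactly $r+2$, which is itself an $(r+2)$-cluster present in $G(n,p)$. Hence w.h.p.\ every cluster of copies of $K_r$ in $G(n,p)$ has size at most $r+1$.

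Next I would deal with the remaining sizes. For each fixed $k$ with $3 \le k \le r+1$, Lemma~\ref{Krclust} gives that the expected number of $k$-clusters in $G(n,p)$ whose vertex set has size different from $r+1$ is $n^{-\Omega(1)}$; by Markov and a union bound over the $O(1)$ values of $k$, w.h.p.\ every $k$-cluster with $3 \le k \le r+1$ has vertex set of size exactly $r+1$. Finally, if $\cC$ is such a cluster with $k \ge 3$ copies on an $(r+1)$-vertex set $S$, then each copy in $\cC$ omits exactly one vertex of $S$, and since $k \ge 3$ the copies omit at least three distinct vertices; hence for every pair $\{u,v\} \subset S$ there is some copy omitting a vertex outside $\{u,v\}$, so the edge $uv$ is covered by that copy and thus lies in $G(n,p)$. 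Therefore $S$ induces a copy of $K_{r+1}$ in $G(n,p)$, which is exactly what is claimed. The only step requiring a little thought is the reduction from arbitrary large clusters to $(r+2)$-clusters, and this is immediate from connectedness of the intersection graph.
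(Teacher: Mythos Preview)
Your proof is correct and follows essentially the same approach as the paper: apply Lemma~\ref{Krclust} with Markov's inequality to rule out clusters not sitting on exactly $r+1$ vertices, then observe that any such cluster spans a $K_{r+1}$. The paper's proof is terser and does not spell out the reduction from unbounded $k$ to bounded $k$ (via the observation that a large cluster contains an $(r+2)$-cluster) nor the verification that three distinct $K_r$'s inside an $(r+1)$-set already cover every edge; your version makes both of these explicit, which is a reasonable thing to do.
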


\begin{proof}
By Lemma~\ref{Krclust} and Markov's inequality, with high probability $G(n,p)$ does not contain any $k$-cluster with more than $r + 1$ vertices. Since every $k$-cluster with $r + 1$ vertices forms a copy of $K_{r+1}$, the corollary follows.
\end{proof}

We can now prove the claimed bound on $\Delta_3$. 

\begin{lemma}\label{lcKrclust}
$\Delta_3 = O(\xi)$. 
\end{lemma}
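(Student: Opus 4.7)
The plan is to split by cases on $r$ and in each case bound $\Delta_3$ by enumerating the finitely many isomorphism types of 3-clusters of copies of $K_r$ in $K_n$ and summing their expected counts in $G(n,p)$. Throughout, note that $\Delta_3$ is (up to a constant) the sum over such types of $n^{v} p^{e}$, where $v$ and $e$ are the number of vertices and edges of the union graph of the cluster.

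For $r \ge 5$ the bound is essentially immediate from Lemma~\ref{Krclust}: any 3-cluster either has exactly $r+1$ vertices (and hence must span a copy of $K_{r+1}$), contributing $O(\mu_{r+1})$, or contributes $n^{-\Omega(1)}$. A direct comparison of the two non-$K_{r+1}$ bounds from the proof of Lemma~\ref{Krclust} against $\mu_{r+1} = \Theta(np^r \mu_r)$ shows that both ratios are $n^{-\Omega(1)}$ when $\gamma$ is small (for the ``two ordinary steps'' case the ratio is $n^{4/r-1+O(\gamma)}$, and for the ``one ordinary step with $t \le r-2$'' case it is $n^{-2/r+O(\gamma)}$), so the non-$K_{r+1}$ contributions are absorbed into $O(\mu_{r+1}) = O(\xi)$.

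For $r = 3$, a short case analysis based on the structure of the $\sim$-graph on $\{T_1, T_2, T_3\}$ yields exactly three isomorphism types of 3-clusters of triangles: the ``book'' (three triangles sharing one edge, $v = 5$, $e = 7$), a ``path cluster'' in which $T_1 \cap T_2$ and $T_2 \cap T_3$ are distinct edges of $T_2$ meeting at a vertex and $T_1 \not\sim T_3$ (again $v = 5$, $e = 7$), and three triangles sitting inside a single $K_4$ ($v = 4$, $e = 6$). The key structural observation is that in the ``$\sim$-triangle'' case, if the three triangles do not share a common edge, then the three pair-intersections must share a common vertex, forcing the four used vertices to span a $K_4$. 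Summing the expected counts gives $\Delta_3 = O(n^5 p^7 + n^4 p^6)$, and the hypothesis $p \le n^{-2/3+\gamma}$ with $\gamma$ small gives $n^3 p^5 = O(1)$, so $n^4 p^6 \le \sqrt{n^5 p^7}$ and hence $\Delta_3 = O(\xi)$.

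For $r = 4$ the same method applies but with more bookkeeping. One enumerates by the intersection sizes $|T_i \cap T_j| \in \{2, 3\}$ and the structure of the $\sim$-graph, eliminating impossible configurations (for example, $|T_1 \cap T_2| = |T_2 \cap T_3| = 3$ forces $|T_1 \cap T_2 \cap T_3| \ge 2$, so $T_1 \sim T_3$). The extreme types are the edge-book ($v = 8$, $e = 16$), the triangle-book ($v = 6$, $e = 12$), and three copies of $K_4$ inside a single $K_5$ ($v = 5$, $e = 10$), along with various intermediates. For each resulting pair $(v,e)$ one verifies $n^v p^e = O(n^3 p^6 + n^8 p^{16})$ by the same style of case split: if $n^v p^e$ exceeds $n^3 p^6$ then setting $x = n^3 p^6$ and writing $n^v p^e = x \cdot n^{v-3} p^{e-6}$, the bound $x > 1$ forces $n^2 p^4 > 1$ (since otherwise $p^2 < n^{-1}$ would give $n^3 p^6 = (np^2)^3/p^{O(1)} \le 1$), and one compares directly with $n^8 p^{16} = x \cdot n^5 p^{10}$. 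The main technical obstacle is that the enumeration for $r=4$ is lengthy and needs to be done carefully; but no new idea beyond that case analysis and the two-regime comparison $n^v p^e \le \max(n^3 p^6, n^8 p^{16})$ is required.
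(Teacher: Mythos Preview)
Your approach for $r=3$ and $r=4$ follows the same case analysis as the paper (and for $r=3$ your use of $p\le n^{-2/3+\gamma}$ in place of the paper's $pn\to\infty$ is a harmless variant). The $r=4$ case is only sketched, but you acknowledge this, and the paper's own argument is equally terse.

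For $r\ge 5$, however, there is a genuine gap. You take the two non-$K_{r+1}$ upper bounds from the proof of Lemma~\ref{Krclust}, namely $O(\mu_r\,n^{4/r-2+O(\gamma)})$ and $O(p^2\mu_r\,n^{2/r-1+O(\gamma)})$, and divide by $\mu_{r+1}=\Theta(np^r\mu_r)$, claiming the resulting ratios are $n^{4/r-1+O(\gamma)}$ and $n^{-2/r+O(\gamma)}$. But these expressions only come out right if you substitute $p^{-r}=n^{2+O(\gamma)}$, i.e.\ if $p$ is at the top of its allowed range. The hypothesis is only $p\le n^{-2/r+\gamma}$, together with $\mu_r\to\infty$; for $p$ well below the upper limit (e.g.\ $r=5$, $p=n^{-0.48}$, where $\mu_5=\Theta(n^{0.2})\to\infty$), one has $\mu_6=\Theta(n^{-1.2})$ while the Lemma~\ref{Krclust} bound gives only $O(n^{-1+O(\gamma)})$, so the quotient is $n^{0.2+O(\gamma)}$, not $n^{-\Omega(1)}$. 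The issue is that the bounds you quote have already absorbed $p$ into powers of $n$ via the \emph{upper} bound on $p$, so dividing them by a quantity that still genuinely depends on $p$ gives nonsense.

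The paper repairs this with a one-line monotonicity trick you are missing: since every $3$-cluster has at least $\binom{r+1}{2}$ edges, each term $n^vp^e/\mu_{r+1}=\Theta(n^{v-r-1}p^{e-\binom{r+1}{2}})$ is non-decreasing in $p$, so $\Delta_3/\mu_{r+1}$ is maximised at $p=n^{-2/r+\gamma}$; at that value $\mu_{r+1}\ge 1$, and then Lemma~\ref{Krclust} gives $\Delta_3=O(\mu_{r+1})+n^{-\Omega(1)}=O(\mu_{r+1})$ directly. Your argument can be fixed by inserting this observation, but as written it does not cover small $p$.
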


\begin{proof}
If $r = 3$, then any $3$-cluster is either a copy of $K_4$ or has exactly $5$ vertices and $7$ edges, so in this case
$$\Delta_3 = O\big( n^4p^6 + n^5p^7 \big) = O(\xi),$$
as claimed, since $\mu_r \to \infty$ implies that $pn \to \infty$. If $r = 4$, then one can easily check that any $3$-cluster is either a copy of $K_5$, or has $6$ vertices and at least $12$ edges, or $7$ vertices and at least $14$ edges, or $8$ vertices and at least $16$ edges. It follows that
$$\Delta_3 = O\big( n^5p^{10}+n^6p^{12} + n^7p^{14} + n^8p^{16} \big) = O(\xi),$$
where the final step follows by considering the cases $p^2n \le 1$ and $p^2 n \ge 1$ separately. We may therefore assume that $r \ge 5$, in which case our task is to show that $\Delta_3 = O(\mu_{r+1})$. We may also assume that $p = n^{-2/r+\gamma}$, since every $3$-cluster has at least $\binom{r+1}{2}$ edges, so $\Delta_3/\mu_{r+1}$ is increasing in $p$. It follows that $\mu_{r+1} \ge 1$, and therefore, by Lemma~\ref{Krclust}, that $\Delta_3 = O( \mu_{r+1} )$, as required.
\end{proof}

We are now ready to deduce Theorem~\ref{thm:Kr} from Theorem~\ref{th2way}.

\begin{proof}[Proof of Theorem~\ref{thm:Kr}]
Let $\cG$ be the set of good $r$-uniform hypergraphs from Definition~\ref{defKrgood}, and observe that $H_r(G(n,p)) \in \cG$ with high probability, by Lemma~\ref{lem:Hgood:whp}. 
Next, by Lemma~\ref{lem:bound1}, for every $H\in \cG$ we have
$$\Pr\big( H_r(G(n,p)) = H \big) = \pi^{e(H)} (1 - \pi)^{{n \choose r} - e(H)} p^{-t(H)} e^{- ( \frac{2e(H)}{\mu_r} - 1 ) ( \Delta_2 - \Delta_2^0 ) + \eta'(H)}.$$
Moreover, we have 
\begin{equation}\label{Krf0}
\eta'(H) = O\Big( \omega \cdot \Delta_3 + \pi^2 N + \omega \cdot \Ex\big[ C(H_r(G(n,p))) \big] \Big)
\end{equation}
for every $H \in \cG$, by Definition~\ref{defKrgood} and Lemma~\ref{lem:Qbounds:rconstant}. Moreover, it follows from Lemmas~\ref{CYbd1},~\ref{DOD} and~\ref{lcKrclust} that
\begin{equation}\label{Krf1}
\Delta_3 + \Ex\big[ C(H_r(G(n,p))) \big] = O(\Delta_3) = O(\xi),
\end{equation}
and it is easy to check that
\begin{equation}\label{Krf2}
\pi^2 N \le n^r p^{2{r \choose 2}} < n^{r+1} p^{{r+1 \choose 2}} = \mu_{r+1} \le \xi
\end{equation}
for all $r \ge 3$, and hence $\eta'(H)=O(\xi)$.

Finally, observe that 
\begin{equation}\label{Krf3}
\bigg( \frac{2e(H)}{\mu_r} - 1 \bigg) \big( \Delta_2 - \Delta_2^0 \big) = \Delta_2 - \Delta_2^0 + O\bigg( \frac{\omega \Delta_2}{\sqrt{\mu_r}} \bigg)
\end{equation}
for every $H \in \cG$, since $e(H) \in \mu_r \pm \omega \sqrt{\mu_r}$, by Definition~\ref{defKrgood}, and $\Delta_2 \ge \Delta_2^0$. It therefore only remains to show that 
\begin{equation}\label{ratxi}
\frac{\Delta_2}{\sqrt{\mu_r}} = O(\xi) + n^{-\Omega(1)}.
\end{equation}
When $r = 3$ this holds since $\Delta_2 = O(n^4p^5)$, $\mu_3 = \Theta(n^3p^3)$ and $\xi \ge \sqrt{n^{5}p^{7}}$, and when $r = 4$ since $\Delta_2 = O(n^5p^9 + n^6p^{11})$, $\mu_4 = \Theta(n^4p^6)$ and $\xi \ge n^3p^6 + n^4p^8$. Finally, when $r \ge 5$, it holds because, by \eqref{eq:Delta2:vs:mu} and since $\mu_r \le n^{1 + O(\gamma)}$, we have
\[
 \frac{\Delta_2}{\sqrt{\mu_r}} = O\big(n^{2/r-1+O(\gamma)} \sqrt{\mu_r} \big) = O\big( n^{2/r - 1/2 + O(\gamma)} \big) = n^{-\Omega(1)}
\]
for $\gamma$ sufficiently small. Combining~\eqref{ratxi} with~\eqref{Krf0},~\eqref{Krf1},~\eqref{Krf2} and~\eqref{Krf3}, we obtain the claimed bound, completing the proof of Theorem~\ref{thm:Kr}.
\end{proof}

To finish the subsection, let us quickly observe that Theorem~\ref{thm:Kr} implies Theorem~\ref{thm:Kr:intro}.

\begin{proof}[Proof of Theorem~\ref{thm:Kr:intro}]
To deduce Theorem~\ref{thm:Kr:intro} from Theorem~\ref{thm:Kr}, simply note that 
$$\Lambda(n,r) = \Delta_2 - \Delta_2^0,$$
by~\eqref{def:Delta},~\eqref{Delta2def} and~\eqref{def:Delta20}, and recall that $\xi = \mu_{r+1}$ when $r \ge 5$. 
\end{proof}

\begin{remark}\label{remomeg}
The use of Markov's inequality to bound the random variables $Q_i\big( H_r(G(n,p)) \big)$ and $C\big( H_r(G(n,p)) \big)$ by $\omega$ times their expectations is rather inefficient; in many cases it should be possible to show that these quantities are concentrated around their means. 
\end{remark}

\subsection{The case $p$ constant}\label{ssGnp}

In this subsection we will prove the upper bound in Theorem~\ref{thgen:intro}. Recall from the statement of the theorem that $r = r(n) \sim 2\log_{1/p}(n)$ for some $p$ bounded away from $0$ and $1$, and $\mu_r = \Theta(n^{1+\theta})$ for some $-1 + \eps \le \theta = \theta(n) \le 1/2 - \eps$. 

Given a hypergraph $H$, let us write $t_k(H)$ for the number of (unordered) pairs of hyperedges of $H$ sharing exactly $k$ vertices, and define 
\begin{equation}\label{nusdef}
\nu_k = \Ex\pig[ t_k\big( H_r(G(n,p)) \big) \pig] = \frac{1}{2}\binom{n}{r}\binom{r}{k} \binom{n-r}{r-k} p^{2\binom{r}{2}-\binom{k}{2}}
\end{equation}
and
\begin{equation}\label{nus0def}
\nu_k^0 = \Ex\big[ t_k\big( H_r(n,\pi) \big) \big] = p^{\binom{k}{2}} \nu_k. 
\end{equation}
Observe that $\Lambda'(n,r) = \nu_2 - \nu_2^0$, where $\Lambda'(n,r)$ was defined in~\eqref{def:Lambda:prime}.


We will prove that Theorem~\ref{thgen:intro} holds with $\cG$ equal to the following family of $r$-uniform hypergraphs. The purpose of the constants $C$ and $\delta$ in the definition is to allow us to obtain the conclusion of the theorem for as large a family of hypergraphs as possible. We will show below (see Lemma~\ref{lem:plausible:whp}) that $H_r(G(n,p))$ has the required properties with high probability for any $C > 0$ and $\delta \in (0,1/4)$. 

\begin{definition}\label{plausdef}
Let $C > 0$ and $\delta \in (0,1/4)$ be constants. We say that an $r$-uniform hypergraph $H$ with $n$ vertices is \emph{plausible} if the following hold:
\begin{equation}\label{plausible:tkHsmall}
 t_k(H) \le (\log n)^C \nu_k
\end{equation}
for $k \in \{0,1,2,r-1\}$,
\begin{equation}\label{plausible:tkHzero}
t_k(H) = 0
\end{equation} 
for $3 \le k \le r-2$, and
\begin{equation}\label{plausible:edges}
 |e(H) - \mu_r| \le n^{1-\delta}.
\end{equation}
\end{definition}

The following theorem implies the upper bound in Theorem~\ref{thgen:intro}. The error terms in the theorem depend on the constants $C$ and~$\delta$. 

\begin{theorem}\label{thgen}
Let $\eps > 0$ be constant, let $-1 + \eps \le \theta = \theta(n) \le 1/2 - \eps$, and suppose that $p = p(n) \in (\eps,1-\eps)$ and $r = r(n) \sim 2\log_{1/p}(n)$ are chosen so that~$\mu_r = \Theta(n^{1+\theta})$. Then 
$$\Pr\big( H_r(G(n,p)) = H \big) \le \pi^{e(H)} (1-\pi)^{{n \choose r} - e(H)} p^{-t(H)} e^{-\nu_2+\nu_2^0 + O^*(n^\theta) + n^{-\Omega(1)}}$$
for every plausible $H$. Moreover, $H_r(G(n,p))$ is plausible with high probability.
\end{theorem}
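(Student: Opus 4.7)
The plan is to apply Theorem~\ref{thmain} to $Y = Y(H)$ for each plausible $H$, verify that $H_r(G(n,p))$ is plausible with high probability, and check that the various error terms in Theorem~\ref{thmain} combine to give $O^*(n^\theta) + n^{-\Omega(1)}$ while the main correction $(2|Y|/\mu_r - 1)(\Delta_2 - \Delta_2^0)$ simplifies to $\Lambda'(n,r) = \nu_2 - \nu_2^0$ up to the same error. The hypothesis $\phi \Delta_2 = O(\Delta_3)$ of Theorem~\ref{thmain} is immediate from Remark~\ref{rmk:phiD2D3}, and $|Y| = e(H) = O(\mu_r)$ by~\eqref{plausible:edges}.

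For the ``plausibility whp'' part, I would verify~\eqref{plausible:tkHsmall}--\eqref{plausible:edges} separately. The conditions~\eqref{plausible:tkHsmall} on $t_k$ for $k \in \{0,1,2,r-1\}$ follow from Markov's inequality, since $\Ex[t_k] = \nu_k$. The vanishing condition~\eqref{plausible:tkHzero} reduces to proving $\sum_{k=3}^{r-2}\nu_k = n^{-\Omega(1)}$, which I would establish using (approximate) log-convexity of $k \mapsto \nu_k$: computing $\nu_{k+1}/\nu_k = \frac{(r-k)^2}{(k+1)(n-2r+k+1)} p^{-k}$ shows the critical point is at $k \approx r/2$, that the boundary values $\nu_2 = \Theta^*(n^{2\theta})$ and $\nu_{r-1} = \Theta^*(n^\theta)$ are maxima, and, since $p^r \asymp n^{-2}$, the decay from either endpoint is geometric with overall factor $n^{-\Omega(1)}$ reached in the middle. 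Because $r = O(\log n)$ the sum is dominated by a single central term and is therefore $n^{-\Omega(1)}$, and Markov finishes. The edge-count condition~\eqref{plausible:edges} follows from Chebyshev, using $\Var\big(e(H_r(G(n,p)))\big) \le \mu_r + 2\Delta_2 = O^*(n^{1+\theta} + n^{2\theta}) = O^*(n^{1+\theta})$ (since $\theta \le 1/2 - \eps$), so the standard deviation is $O^*(n^{(1+\theta)/2}) \ll n^{1-\delta}$ for any $\delta < 1/4$.

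For plausible $H$, applying Theorem~\ref{thmain} leaves us to bound each summand in the exponent. By~\eqref{plausible:edges}, $(2|Y|/\mu_r - 1)(\Delta_2 - \Delta_2^0) = (\Delta_2 - \Delta_2^0) + O^*(n^{\theta-\delta} + n^{-\delta})$, and decomposing $\Delta_2 - \Delta_2^0 = \sum_{s=2}^{r-1}(\nu_s - \nu_s^0)$ we see that only $\nu_2 - \nu_2^0$ survives to leading order, with $\nu_{r-1} - \nu_{r-1}^0 = O^*(n^\theta)$ and the middle terms summing to $n^{-\Omega(1)}$. Of the five pieces of $\eta(Y)$: $\pi^2 N = \pi \mu_r = n^{-\Omega(\log n)}$ since $\pi = p^{\binom{r}{2}} = n^{-\Omega(\log n)}$; $\phi \cdot Q_2(Y) \le p^{r-1}\cdot (\log n)^C(\nu_2 + \nu_{r-1}) = n^{-\Omega(1)}$ since $\phi = O(n^{-2})$; and $\Delta_3$ is dominated by triples of copies of $K_r$ sitting inside a $K_{r+1}$, contributing $O^*(\mu_{r+1}) = O^*(n^\theta)$. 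For $Q_3(Y)$, I would split the 3-cluster sum according to the pair $\{i,j\} \subset Y$, which by~\eqref{plausible:tkHzero} must share either $2$ or $r-1$ vertices: the inner extension sum is $\sum_{t \ge 2}\binom{r}{t}\binom{n-r}{r-t}p^{\binom{r}{2}-\binom{t}{2}} = O^*(n^{\theta-1})$ (dominated by $t=2$ and $t=r-1$), and multiplying by $Q_2(H) = O^*(n^{2\theta}+n^\theta)$ gives $Q_3(H) = O^*(n^{3\theta-1}+n^{2\theta-1}) \le O^*(n^\theta)$ thanks to $\theta \le 1/2 - \eps$. The bound on $Q_4(Y)$ follows the same template, with the extra input that $\Pr(A_{i'}\cap A_{j'}\mid A_i\cap A_j)$ factorises modulo an $O(1)$ correction for the $i' \sim j'$ intersection, thanks to the constraints $i' \not\sim j$ and $j' \not\sim i$.

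The main obstacle will be making the cluster bookkeeping for $\Delta_3$ and $Q_4$ rigorous uniformly in $r$ and $\theta$. Because $r = r(n) \to \infty$, the number of cluster isomorphism types is super-polylogarithmic, so one cannot simply union-bound over shapes. The plan is to adapt the step-by-step extension argument of Lemma~\ref{Krclust} to the growing-$r$ setting: each extension of a cluster either adds a vertex via an intersection of size $2 \le t \le r-2$ (at cost $n^{r-t}p^{\binom{r}{2}-\binom{t}{2}} = n^{-\Omega(1)}$) or performs a second ``ordinary'' extension beyond the first (at cost $n p^{r-1} = n^{-\Omega(1)}$), so any $k$-cluster not contained in a $K_{r+1}$ is $n^{-\Omega(1)}$ in expectation. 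This reduces the analysis to a short list of $K_{r+1}$-based cluster types whose aggregate contribution is $O^*(n^\theta)$, completing the proof.
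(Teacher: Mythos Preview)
Your high-level plan matches the paper's: apply Theorem~\ref{thmain}, check plausibility via Markov/Chebyshev and the $\nu_k$ estimates, and then bound each piece of $\eta(Y)$. Two points deserve attention.

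First, your direct bound on $Q_3(H)$ contains an error. You write that the pair $\{i,j\}\subset Y$ ``must share either $2$ or $r-1$ vertices'' by~\eqref{plausible:tkHzero}, but in the definition~\eqref{Q3def} the set $\{i,j,k\}$ is only required to be a $3$-cluster, so one may have $i\not\sim j$ with the connection going through $k$; then $i$ and $j$ can share $0$ or $1$ vertices, and these pairs are counted by $t_0(H)$ and $t_1(H)$, which are huge (of order $\mu_r^2$). The paper avoids this pitfall via a structural observation (Lemma~\ref{quadcor}): by symmetry, $Q_i(H)=\sum_{k=0}^{r-1} c_i(k)\,t_k(H)$ for nonnegative constants $c_i(k)$ not depending on $H$, so the plausibility bounds $t_k(H)\le (\log n)^C\nu_k$ for \emph{all} $k\in\{0,1,\dots,r-1\}$ immediately give $Q_i(H)=O^*(\Ex[Q_i])=O^*(\Delta_i)$. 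This is why Definition~\ref{plausdef} includes $k\in\{0,1\}$ in~\eqref{plausible:tkHsmall}. Your direct computation can be repaired, but this route is both cleaner and what the paper actually does.

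Second, you correctly identify the super-polylogarithmic number of cluster shapes as the main obstacle for $\Delta_3$ and $\Delta_4$, but your sketch of the fix is missing the key idea. The issue is not just the ``cost'' of each extension step, but the \emph{number of ways} to choose which vertices of $E_1\cup\cdots\cup E_{i-1}$ to reuse: for intermediate intersection sizes this is of order $\binom{O(r)}{t}$, which can be $n^{\Theta(1)}$. The paper (Lemma~\ref{G34normal}) handles this by splitting off ``strange'' pairwise intersections (size between $\lambda$ and $r-\lambda$ for a large constant $\lambda$): if any such intersection occurs, a crude bound with the full $2^{O(r)}=n^{O(1)}$ choice factor is still beaten by the $n^{-\lambda/2}$ gain from the intersection; otherwise every pairwise intersection has size at most $\lambda$ or at least $r-\lambda$, so the number of reuse choices is $O^*(1)$ and the constant-$r$ argument goes through. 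Your proposed adaptation of Lemma~\ref{Krclust} does not contain this splitting and would not, as written, control the choice factors.
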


We will deduce Theorem~\ref{thgen} from Theorem~\ref{thmain}. In order to do so, observe first that  the condition $\phi \Delta_2 = O(\Delta_3)$ in the theorem is satisfied, by Remark~\ref{rmk:phiD2D3}. We may therefore apply Theorem~\ref{thmain} to the ${r \choose 2}$-uniform hypergraph encoding copies of $K_r$ in $E(K_n)$. 

Let us assume, until the end of the proof of Theorem~\ref{thgen}, that $\eps > 0$ is fixed, and that $p \in (\eps,1-\eps)$, $r \in \N$ and $\theta \in \R$ satisfy the assumptions of the theorem. Recall from~\eqref{def:YH} and~\eqref{def:CH:QiH} the definitions of $Y(H)$ and $Q_i(H)$. In order to deduce the claimed bound, we need to bound the error term $\eta(Y(H))$ for each plausible hypergraph $H$. We will prove the following bounds. 

\begin{lemma}\label{lem:pconstant:error:bound}
If $H$ is plausible, then 
\begin{equation}\label{eq:QiH:bound:pconstant}
Q_i(H) = O^*(\Delta_i)
\end{equation}
for each $i \in \{2,3,4\}$. Moreover
\begin{equation}\label{eq:Delta34:bounds:pconstant}
\Delta_3 = O^*(n^\theta)+n^{-\Omega(1)} \qquad \text{and} \qquad \Delta_4 = O^*(n^\theta) + n^{-\Omega(1)}.
\end{equation}
\end{lemma}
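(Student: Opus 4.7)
The plan is to handle the two groups of bounds separately: the cluster expectations $\Delta_3, \Delta_4$ via a case analysis over cluster shapes, and the $Q_i(H)$ via symmetry arguments that reduce them to $\Delta_i$ and $Q_2(H)$. Throughout, the central quantitative input is $r \sim 2\log_{1/p}n$, which gives $p^r = n^{-2+o(1)}$ and hence $\mu_{r+1} = \Theta^*(n^\theta)$ and $\mu_{r+j+1}/\mu_{r+j} = \Theta^*(1/n)$ for any fixed $j \ge 0$.

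For $\Delta_3$ and $\Delta_4$, I will classify $k$-cluster shapes by their vertex count $v$ and edge count $e$; the expected number of labelled copies of any fixed shape in $G(n,p)$ is $\Theta^*(n^v p^e)$. When $v = r+1$ the shape lives inside a $K_{r+1}$, and since every pair of $K_r$-subcopies of $K_{r+1}$ meets in $r-1$ vertices (hence is $\sim$-related), every $3$-subset or $4$-subset of the $r+1$ copies of $K_r$ in $K_{r+1}$ is automatically a cluster, contributing $O(r^{O(1)})\,\mu_{r+1} = O^*(n^\theta)$. When $v \ge r+2$, a combinatorial argument shows the cluster must have $e \ge \binom{v}{2} - O(1)$ edges: the vertices ``excluded'' from any single $K_r$-subcopy of the $v$-vertex union number at most $k(v-r) = O(1)$ in total across all copies, so missing edges lie in a constant-sized set and number at most $\binom{O(1)}{2} = O(1)$. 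The expected count is then $\Theta^*(\mu_v) = \Theta^*(n^{\theta-(v-r-1)})$, which is $n^{-\Omega(1)}$ whenever $v \ge r+2$ and $\theta \le 1/2-\eps$. Summing over the $O^*(1)$ relevant isomorphism types gives $\Delta_3, \Delta_4 = O^*(n^\theta) + n^{-\Omega(1)}$.

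For $Q_2(H)$, plausibility is used directly: $Q_2(H) = \sum_{s=2}^{r-1} t_s(H) = t_2(H) + t_{r-1}(H)$ by~\eqref{plausible:tkHzero}, and~\eqref{plausible:tkHsmall} gives $t_s(H) \le (\log n)^C \nu_s$ for $s \in \{2, r-1\}$. A log-convexity check on $\nu_{s+1}/\nu_s = \Theta^*(np^s)$ (transitioning from $\Theta^*(n^{-1})$ at $s=2$ to $\Theta^*(n)$ at $s=r-1$) yields $\Delta_2 = \Theta(\nu_2 + \nu_{r-1})$, so $Q_2(H) = O^*(\Delta_2)$.

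For $Q_3(H)$ and $Q_4(H)$, the key tool is the symmetry identity $\sum_{k\sim i} \Pr(A_k \mid A_i) = 2\Delta_2/\mu_r$ (independent of $i$), which comes from $\sum_{i\sim k} \Pr(A_i\cap A_k) = 2\Delta_2$ together with automorphism-transitivity of $\cH$. I will split the sum defining $Q_3(H)$ by whether the two indices $i,j \in H$ are $\sim$-related: if $i \sim j$ then $\{i,j\}$ is counted by $Q_2(H)$ and the inner sum over ``connector'' $k$'s is $O(\Delta_2/\mu_r)$ by the identity, giving $O^*(Q_2(H)\cdot\Delta_2/\mu_r) = O^*(\Delta_2^2/\mu_r)$; a direct check using $\Delta_2 = \Theta^*(n^{2\theta} + n^\theta)$ yields $\Delta_2^2/\mu_r = O^*(n^{3\theta-1} + n^{\theta-1}) = O^*(n^\theta)$, matching $\Delta_3$ (this is where $\theta \le 1/2-\eps$ is used). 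If instead $i \not\sim j$ the connector $k$ must be $\sim$-adjacent to both, and I would reorganize the sum by $k$ first, bounding the number of pairs $(i,j) \in H^2$ both $\sim$-adjacent to $k$ via the symmetry identity plus the edge count $e(H) = \Theta(\mu_r)$ from~\eqref{plausible:edges}, again yielding $O^*(\Delta_3)$. The treatment of $Q_4(H)$ is analogous, decomposing the 4-tuples $(i,i',j',j)$ in the definition by their $\sim$-structure to obtain $O^*(Q_2(H)\cdot(\Delta_2/\mu_r)^2) = O^*(\Delta_4)$.

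The main technical obstacle will be the case analysis underlying the $\Delta_3, \Delta_4$ bound: one must verify that \emph{every} 3- and 4-cluster shape on $v \ge r+2$ vertices has $e \ge \binom{v}{2} - O(1)$, and sum the resulting $O^*(n^{\theta-(v-r-1)})$ contributions over the many isomorphism types in a controlled way. A secondary difficulty is the $i \not\sim j$ case of $Q_3(H)$ (and the analogous case for $Q_4(H)$), where plausibility does not directly control individual $\sim$-codegrees of $H$, so one must exploit the averaged symmetry identity rather than working with pointwise degree bounds.
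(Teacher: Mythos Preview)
Your argument for $\Delta_3$ and $\Delta_4$ has a genuine gap. The claim that a $k$-cluster on $v\ge r+2$ vertices must have $e\ge \binom{v}{2}-O(1)$ edges rests on the assertion that $k(v-r)=O(1)$, but $v-r$ is \emph{not} bounded: for three copies of $K_r$ pairwise overlapping in two vertices each (a perfectly legitimate $3$-cluster) one has $v=3r-6$, so $v-r=2r-6=\Theta(\log n)$. In that example $e=3\binom{r}{2}-3$, while $\binom{v}{2}-e\approx 3r^2$, so the cluster is very far from being a near-complete graph on its vertex set and your $\Theta^*(\mu_v)$ estimate does not apply. The paper handles this by a step-by-step analysis (Lemma~\ref{G34normal}): it first isolates clusters containing a ``strange'' intersection (overlap size in $[\lambda,r-\lambda]$ for a large constant $\lambda$), bounding these crudely by $n^{-\lambda/4}$; for the remaining clusters all overlaps are of size $\le\lambda$ or $\ge r-\lambda$, so the number of ways to choose reused vertices at each step is $O^*(1)$, and one shows that each ``ordinary'' step (adding a new vertex) multiplies the expectation by $O^*(n^{\hat\theta-1})$. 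Two ordinary steps already give $O^*(\mu_r\cdot n^{2\hat\theta-2})=O^*(n^{3\theta-1})+n^{-\Omega(1)}$, and the single-ordinary-step case is dealt with separately. Your edge-counting shortcut misses precisely the clusters with small pairwise overlaps, which are the ones the paper's step argument is designed to control.

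For $Q_i(H)$ your approach can be made to work but is considerably more laborious than the paper's, and the $i\not\sim j$ case of $Q_3$ you sketch is delicate: reorganising by the connector $k$ leaves you needing to bound the $\sim$-degree in $H$ of an \emph{arbitrary} index $k$, which plausibility does not control pointwise. The paper sidesteps all of this with a one-line observation (Lemma~\ref{quadcor}): by symmetry of $\cH$, each $Q_i(H)$ is a linear combination $\sum_{s}c_i(s)\,t_s(H)$ with nonnegative coefficients $c_i(s)$ that do not depend on $H$. Plausibility then gives $t_s(H)\le(\log n)^C\nu_s$ for every $s$, so $Q_i(H)\le(\log n)^C\sum_s c_i(s)\nu_s=(\log n)^C\,\Ex[Q_i(H_r(G(n,p)))]=O^*(\Delta_i)$ by Lemma~\ref{lem:ExQi}. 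This linearity-in-$t_s$ observation is worth internalising: it converts plausibility directly into the required bound with no case analysis on the $\sim$-structure of the pair $(i,j)$.
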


Recall from Lemma~\ref{lem:ExQi} that $\Ex\big[ Q_i\big( H_r(G(n,p)) \big) \big] = O(\Delta_i)$ for each $i \in \{2,3,4\}$, and from Definition~\ref{plausdef} that $t_k(H) \le (\log n)^C \nu_k$ for all plausible $H$ and all $0 \le k \le r - 1$. To prove~\eqref{eq:QiH:bound:pconstant}, it will therefore suffice to prove the following lemma.  

\begin{lemma}\label{quadcor}
If\/ $t_k(H) \le \lambda \cdot \nu_k$ for all\/ $0 \le k \le r - 1$, then 
$$Q_i(H) \le \lambda \cdot \Ex\big[ Q_i\big( H_r(G(n,p)) \big) \big]$$
for each $i \in \{2,3,4\}$.
\end{lemma}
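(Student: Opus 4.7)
The plan is to prove Lemma~\ref{quadcor} by decomposing each $Q_i(H)$ as a nonnegative linear combination of the counts $t_s(H)$, with coefficients independent of $H$ that are identical to those appearing in the analogous decomposition of $\Ex\big[ Q_i(H_r(G(n,p))) \big]$ in terms of the $\nu_s$. The hypothesis $t_s(H) \le \lambda\, \nu_s$ then gives the conclusion termwise.

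For $i = 2$ this decomposition is immediate: in the present setting $i \sim j$ holds iff $|V_i \cap V_j| \ge 2$, since two distinct copies of $K_r$ share an edge of $K_n$ iff they share at least two vertices. Hence
\[
 Q_2(H) \,=\, \sum_{s = 2}^{r-1} t_s(H) \qquad \text{and} \qquad \Ex\big[ Q_2(H_r(G(n,p))) \big] \,=\, \sum_{s = 2}^{r-1} \nu_s,
\]
and summing the hypothesis over $s \in \{2,\ldots,r-1\}$ gives the result for $i = 2$.

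For $i \in \{3,4\}$ the same idea applies, broken up by isomorphism type. Call a \emph{labeled cluster} a cluster together with a specification of which indices play the role of the ``pair'' $(i,j)$ (required to lie in $Y(H)$) and which are ``extension'' indices ($k$ for $Q_3$, or $(i',j')$ for $Q_4$), and declare two labeled clusters isomorphic if there is a bijection of their vertex sets in $V(K_n)$ preserving all labels and all pairwise intersection sizes $|V_\ell \cap V_{\ell'}|$. Let $\tau$ denote an isomorphism class and $s(\tau)$ the intersection size of its pair-labeled edges. Two observations are key. First, the probability weight $w_\tau$ in the defining sum for $Q_i$ — namely $\Pr(A_k \mid A_i)$ for $Q_3$ and $\Pr(A_{i'}\cap A_{j'} \mid A_i \cap A_j)$ for $Q_4$ — depends only on the pairwise intersection sizes of the $V_\ell$'s, and hence only on $\tau$, because such conditional probabilities in $G(n,p)$ are determined by the edge set of the union $\bigcup_\ell V_\ell$. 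Second, by transitivity of $S_n$ on ordered pairs of $r$-subsets of any given intersection size, for any pair $(V,V')$ with $|V \cap V'| = s$ the number of extensions of $(V,V')$ to a labeled cluster of type $\tau$ is a constant $g_\tau$ depending only on $\tau$ (and $n,r$), which is zero unless $s = s(\tau)$. Consequently
\[
 Q_i(H) = \sum_\tau w_\tau\, g_\tau\, t_{s(\tau)}(H),
 \qquad
 \Ex\big[ Q_i(H_r(G(n,p))) \big] = \sum_\tau w_\tau\, g_\tau\, \nu_{s(\tau)},
\]
with the same nonnegative coefficients $w_\tau g_\tau$, and the hypothesis $t_s(H) \le \lambda\, \nu_s$ yields the bound termwise.

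The only bookkeeping point requiring attention is that for $Q_4$ the ``core pair'' is naturally ordered (since $i$ is adjacent to $i'$ but not to $j'$, while the roles of $j$ and $j'$ are reversed), and its intersection size may be any value in $\{0,1,\ldots,r-1\}$; this is precisely why the lemma assumes $t_k(H) \le \lambda\, \nu_k$ for the full range $0 \le k \le r-1$ rather than only for $k \ge 2$. One must count ordered pairs consistently on both sides, with any factor of $2$ absorbed into $g_\tau$; beyond this, no technical difficulty arises.
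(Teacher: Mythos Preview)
Your proof is correct and follows essentially the same approach as the paper: both arguments observe that, by symmetry, $Q_i(H)$ can be written as a nonnegative linear combination $\sum_k c_i(k)\,t_k(H)$ with coefficients $c_i(k)$ depending only on $n$, $r$, and $p$ (not on $H$), whence the hypothesis $t_k(H)\le\lambda\,\nu_k$ gives the bound termwise. Your version is simply more explicit, breaking the coefficients down by isomorphism type of the labeled cluster; one small wording point is that the weight $w_\tau$ is most cleanly seen to depend only on $\tau$ via the bijection-of-vertex-sets notion of isomorphism (your parenthetical ``determined by the edge set of the union'' is the right justification), rather than literally ``only on the pairwise intersection sizes'', though for the specific constraints in $Q_4$ the latter does happen to suffice.
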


\begin{proof}
The key observation is that, for fixed $n$ and $r$, there exist non-negative constants $c_i(k)$ such that 
\[ 
 Q_i(H) = \sum_{k = 0}^{r-1} c_i(k) \cdot t_k(H)
\]
for every $r$-uniform hypergraph $H$ on $n$ vertices. To see this, observe that by symmetry, in the sums defining $Q_i(H)$ (see~\eqref{Q2def},~\eqref{Q3def} and~\eqref{Q4def}) all pairs of $r$-cliques with a given intersection size $s$ contribute the same when we look at the terms where the indicator functions correspond to these $r$-cliques, and moreover this contribution does not depend on $H$. Since $t_k(H) \le \lambda \cdot \nu_k$ for all $0 \le k \le r - 1$, the claimed bound follows from the definition~\eqref{nusdef} of $\nu_k$.
\end{proof}

\begin{remark}
Note that Lemma~\ref{quadcor} holds for any $n$, $r$ and $p$, and as a consequence it can also be used when verifying that a hypergraph is good, in the sense of Definition~\ref{defKrgood}.
\end{remark}

In order to prove~\eqref{eq:Delta34:bounds:pconstant}, we will consider separately those clusters that are contained in sets of size $r+1$, and those that are not. For each $k \ge 3$, let us say that a $k$-cluster is \emph{normal} if it has at least $r+2$ vertices in total.

\pagebreak

\begin{lemma}\label{G34normal}
If $k \in \{3,4\}$, then the expected number of normal $k$-clusters is 
$$O^*(n^{3\theta - 1}) + n^{-\Omega(1)}.$$
\end{lemma}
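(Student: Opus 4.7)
The plan is to follow the cluster-enumeration strategy of Lemma~\ref{Krclust}: list the copies in a normal $k$-cluster as $E_1,\ldots,E_k$ in an order such that each $E_i$ with $i\ge 2$ shares at least one edge with $E_1\cup\cdots\cup E_{i-1}$, and write $t_i=|V_i\cap(V_1\cup\cdots\cup V_{i-1})|$. Normality (at least $r+2$ vertices in total) becomes $\sum_{i\ge 2}(r-t_i)\ge 2$; the only excluded profiles are $(t_2,t_3)=(r-1,r)$ for $k=3$ and $(t_2,t_3,t_4)=(r-1,r,r)$ for $k=4$, both of which correspond to all copies lying inside a single $(r+1)$-clique.

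For each profile $(t_2,\ldots,t_k)$, I would combine the combinatorial choice count with the lower bound $|E_1\cup\cdots\cup E_k|\ge\binom{r}{2}+\sum_{i\ge 2}(\binom{r}{2}-\binom{t_i}{2})$ on revealed edges (attained when the shared vertices form a clique in the existing union), together with $|V_1\cup\cdots\cup V_{i-1}|\le (i-1)r$, to bound the expected number of labeled tuples $(E_1,\ldots,E_k)$ present in $G(n,p)$ by
\[
 O^*\bigg(\frac{\prod_{i=2}^k \nu_{t_i}}{\mu_r^{k-2}}\bigg),
\]
where $\nu_t$ is as in~\eqref{nusdef}; the $O^*$ absorbs the constant $k$ and the $r=O(\log n)$ combinatorial factor arising from comparing $\binom{(i-1)r}{t_i}$ with the $\binom{r}{t_i}$ in $\nu_{t_i}$.

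It remains to estimate $\nu_s$. Using $\mu_r=\Theta(n^{1+\theta})$, hence $p^{\binom{r}{2}}\sim r!\,n^{1+\theta-r}$, a direct calculation (in which the $r!$ coming from $p^{\binom{r}{2}}$ cancels against the $(r-s)!$ in $\binom{n-r}{r-s}$) yields $\nu_2=\Theta^*(n^{2\theta})$ and $\nu_{r-1}=\Theta^*(n^\theta)$ as the two boundary maxima, together with the ratio bound $\nu_{s+1}/\nu_s=O^*(n^{-1})$ for small $s$ and $\nu_{s-1}/\nu_s=O^*(n^{-1})$ for $s$ close to $r$. It follows that $\nu_s\le O^*(n^{-1/2})\max(\nu_2,\nu_{r-1})$ uniformly over $3\le s\le r-2$. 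The dominant normal profile is therefore $t_i=2$ for all $i$, contributing
\[
 O^*\bigg(\frac{\nu_2^{k-1}}{\mu_r^{k-2}}\bigg)=O^*\big(n^{k\theta-(k-2)}\big),
\]
which equals $n^{3\theta-1}$ for $k=3$ and $n^{4\theta-2}\le n^{3\theta-1}$ for $k=4$ thanks to $\theta\le 1/2-\eps<1$. Every profile containing an intermediate $\nu_s$ (with $3\le s\le r-2$) loses an extra factor $n^{-\Omega(1)}$ and remains inside the $O^*(n^{3\theta-1})$ error; profiles using only $t_i=r-1$ steps give the even smaller bound $O^*(\nu_{r-1}^{k-1}/\mu_r^{k-2})$; and when $\theta<0$ the dominant term itself is $n^{-\Omega(1)}$.

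The main technical obstacle is the combinatorial bookkeeping: verifying that the various $r!$ and $(r-s)!$ factors genuinely cancel, so that $\prod\nu_{t_i}/\mu_r^{k-2}$ appears inside $O^*$ with only polylog loss, and checking the uniform polynomial separation of $\nu_s$ for $3\le s\le r-2$ from $\nu_2$ and $\nu_{r-1}$; the regime where $s$ is close to $r/2$ is not delicate in itself (the corresponding $\nu_s$ is exponentially small in $r$), but the transition between the small-$s$ and large-$s$ extrapolations has to be handled carefully.
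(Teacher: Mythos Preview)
Your overall strategy—profile enumeration over $(t_2,\ldots,t_k)$ and comparison with $\prod\nu_{t_i}/\mu_r^{k-2}$—is natural, but the key inequality you rely on fails. You claim that the expected number of ordered tuples with a given profile is
\[
 O^*\!\bigg(\frac{\prod_{i=2}^k \nu_{t_i}}{\mu_r^{k-2}}\bigg),
\]
with the $O^*$ absorbing the ratio $\binom{(i-1)r}{t_i}/\binom{r}{t_i}$. This ratio is \emph{not} polylogarithmic once $t_i$ is comparable to $r$ and $i\ge 3$. For instance, take the normal $3$-cluster profile $(t_2,t_3)=(2,r-1)$: then $|V_1\cup V_2|=2r-2$ and
\[
 \frac{\binom{2r-2}{r-1}}{\binom{r}{r-1}} \;=\; \frac{1}{r}\binom{2r-2}{r-1}\;\sim\;\frac{4^{r-1}}{r^{3/2}} \;=\; n^{\Omega(1)},
\]
since $r\sim 2\log_{1/p}n$ and $p$ is bounded away from $0$ and $1$. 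Your crude upper bound for this profile, namely $\mu_r\cdot\binom{r}{2}\binom{n}{r-2}p^{\binom{r}{2}-1}\cdot\binom{2r-2}{r-1}\cdot n\cdot p^{r-1}$, is therefore larger than $\nu_2\nu_{r-1}/\mu_r$ by a factor $n^{\Omega(1)}$, and is in fact far larger than $1$, so the argument as written gives no useful bound for this profile. The point is that most of the $\binom{2r-2}{r-1}$ choices of shared vertex set do \emph{not} span a clique in $E_1\cup E_2$, so pairing this count with the probability $p^{\binom{r}{2}-\binom{r-1}{2}}$ (which is only correct when the shared set is a clique) massively overcounts. A similar issue arises for $t_i=r$, which your formula does not even cover since $\nu_r$ is undefined.

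This is precisely the obstacle the paper's proof is designed to overcome. The paper introduces a dichotomy based on \emph{strange} pairwise intersections (size between $\lambda$ and $r-\lambda$ for a large constant $\lambda$). If some pair in $\cC$ has a strange intersection, then already $N_2\le n^{-\lambda/2}$, which absorbs the crude $2^{3r}=n^{O(1)}$ choice factor in later steps. If no pair has a strange intersection, then at each step $i$ the reused vertices must consist of at most $\lambda$ or at least $r-\lambda$ vertices from \emph{each} earlier clique, so the number of choices is $O(r^{3\lambda})=O^*(1)$, and one can then argue as in the constant-$r$ lemma. To repair your approach you would need some device of this kind; estimating $\nu_s$ for intermediate $s$ does not suffice, because the failure occurs already at $s=r-1$.
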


In the proof of Lemma~\ref{G34normal}, and also later in the section, it will be convenient to define
\[
 \hb = \max\{\theta,0\}.
\]
We will also need the following simple observation. 

\begin{obs}\label{obs:p:tothe:r}
If $\mu_r = n^{O(1)}$, then $p^r = \Theta^*(n^{-2})$. 
\end{obs}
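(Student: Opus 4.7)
The plan is to take logarithms of $\mu_r = \binom{n}{r} p^{\binom{r}{2}}$ and solve directly for $r \log(1/p)$. Write $L := \log(1/p)$; since $p \in (\eps,1-\eps)$ we have $L = \Theta(1)$, and the hypothesis $r \sim 2\log_{1/p}(n)$ gives $r = \Theta(\log n)$ and hence $\log r = \Theta(\log \log n)$. In particular, terms of order $r$ are absorbed into $O(\log n)$.

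Using the standard estimate $\log \binom{n}{r} = r \log(n/r) + O(r) = r \log n - r \log r + O(\log n)$, the identity $\log \mu_r = \log\binom{n}{r} - \binom{r}{2} L$ rearranges to
\[
 \binom{r}{2} L \;=\; r \log n - r \log r + O(\log n),
\]
where the error absorbs both $\log \mu_r = O(\log n)$ (our hypothesis) and the $O(r) = O(\log n)$ slack from the binomial estimate. Dividing both sides by $r/2 = \Theta(\log n)$ yields
\[
 (r-1) L \;=\; 2\log n - 2 \log r + O(1) \;=\; 2\log n + O(\log \log n),
\]
and adding a single $L = O(1)$ term gives $rL = 2\log n + O(\log \log n)$.

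Exponentiating, $p^r = e^{-rL} = n^{-2} \cdot e^{O(\log \log n)} = n^{-2}(\log n)^{O(1)} = \Theta^*(n^{-2})$, as claimed. There is no substantive obstacle: the observation is a short calculation extracting the tight near-cancellation between $\log \binom{n}{r}$ and $\binom{r}{2} L$ that is forced by $\mu_r = n^{O(1)}$. The only point worth checking is that, after division by $r = \Theta(\log n)$, the $O(\log n)$ error inherited from $\log \mu_r$ and the $O(r)$ slack both shrink to $O(1)$, leaving $\log r = \Theta(\log \log n)$ as the only genuinely growing correction, which is precisely the slack allowed by $\Theta^*$.
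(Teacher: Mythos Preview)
Your proof is correct and takes essentially the same approach as the paper: both extract the asymptotic $p^{r/2} \approx r/n$ from the polynomial size of $\mu_r$ using the estimate $\binom{n}{r} = (n/r)^r \cdot n^{O(1)}$. The paper writes this compactly as $\mu_r = n^{O(1)}\bigl((n/r)p^{r/2}\bigr)^r$ and takes $r$th roots, while you unwind the same computation in logarithms; the content is identical.
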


\begin{proof}
Observe that 
$$\mu_r = {n \choose r} p^{{r \choose 2}} = n^{O(1)} \bigg( \frac{n}{r} \cdot p^{r/2} \bigg)^r,$$
and hence $p^{r/2} = \Theta(r/n) = \Theta^*(n^{-1})$, as claimed. 
\end{proof}

The proof of the lemma is somewhat similar to that of Lemma~\ref{Krclust}. 

\begin{proof}[Proof of Lemma~\ref{G34normal}]
Observe first that the number of isomorphism classes of $k$-clusters is $O^*(1)$, so it will suffice to prove the claimed bound for a fixed $k$-cluster $\cC$. Let $E_1,\ldots,E_k$ be the cliques in $\cC$, ordered so that each after the first shares an edge of $K_n$ with the union of the earlier ones. 
The complication in this setting is that, since $r$ is no longer constant, there are many ways to choose the subset of the vertices of the previous cliques to re-use in $E_i$. Indeed, a crude bound on $m_i$, the number of choices in step $i \le 4$, is $2^{3r} = n^{O(1)}$, where the implicit constant depends only on $\eps$, since $r \sim 2\log_{1/p}(n)$ and $p \in (\eps,1-\eps)$. 

To deal with this issue, let us say that two cliques have a \emph{strange} intersection if they share between $\lambda$ and $r - \lambda$ vertices, where $\lambda$ is a suitable (sufficiently large) constant. As before, let $N_i$ denote the expected number of labelled copies of $\cC_i$ (the $i$-cluster formed by the cliques $E_1,\ldots,E_i$) present in $G(n,p)$.
If two cliques in $\cC$ have a strange intersection of size $t$, then we may list them first, and deduce that
$$N_2 \le \mu_r \cdot {n \choose r - t} p^{\binom{r}{2} - \binom{t}{2}} \le n^{-\lambda/2},$$ 
since $p^r = \Theta^*(n^{-2})$, by Observation~\ref{obs:p:tothe:r}. Now, observe that $$N_i = O^*\big( m_i N_{i-1} \big),$$
as in the proof of Lemma~\ref{Krclust}, since if $E_i$ shares $2 \le t \le r - 1$ vertices of $K_n$ with $E_1 \cup \cdots \cup E_{i-1}$, then
\begin{equation}\label{eq:Ni:vs:miNi-1}
\frac{N_i}{m_i N_{i-1}} \le {n \choose r - t} p^{\binom{r}{2}-\binom{t}{2}} = O\big( \max\big\{ \mu_r r^2 / n^2, \, p^r n \big\} \big) = O^*(n^{\hb - 1}),
\end{equation}
by Observation~\ref{obs:p:tothe:r}, while if $E_i$ shares $r$ vertices with $E_1 \cup \cdots \cup E_{i-1}$ then $N_i \le m_i N_{i-1}$. Using our crude bound on $m_i$, it follows that for such clusters we have
$$N_k \le n^{-\lambda/2 + O(1)} \le n^{-\lambda/4}$$
if we choose $\lambda$ sufficiently large (depending on $\eps$). 

We may therefore assume that $\cC$ contains no strange intersections. But now when choosing vertices to re-use, from each earlier clique we must pick either at most $\lambda$ vertices, or all but at most $\lambda$ vertices. Thus $m_i = O((\log n)^{\lambda})$ for each $i$, and hence the total number of choices for the shared vertices is $O^*(1)$. In other words, we are now in a situation where we can ignore these choice factors.

We can now complete the proof by repeating the argument from Lemma~\ref{Krclust}. Indeed, step 2 is always ordinary (meaning that $E_i$ contains a vertex not in $E_1 \cup \cdots \cup E_{i-1}$), and if there are at least two ordinary steps, then
$$N_k \le O^*\big( \mu_r \cdot n^{2\hb - 2} \big) = O^*( n^{\theta + 2\hb - 1} ) \le O^*( n^{3\theta - 1} ) + n^{-\Omega(1)},$$
by~\eqref{eq:Ni:vs:miNi-1} and since $\mu_r = \Theta( n^{1 + \theta} )$. On the other hand, if $i = 2$ is the only ordinary step, then $N_k = O^*(N_2)$. If $E_1$ and $E_2$ share $3 \le t \le r - 2$ vertices, then
$$N_2 \le \mu_r \binom{r}{t} {n \choose r - t} p^{\binom{r}{2} - \binom{t}{2}} = O^*\big( \max\big\{ \mu_r^2 / n^3, \, \mu_r p^{2r} n^2 \big\} \big) \le n^{-\Omega(1)},$$ 
since $\mu_r = O(n^{3/2-\eps})$, and so we are done. If they share only $2$ vertices, then $E_3$ must have at least $r - 3$ edges outside $E_1 \cup E_2$, so 
$$N_3 = O^*\big( p^{r-3}N_2 \big) = O^*\big( n^{-2} \cdot \mu_r^2 / n^2 \big) = n^{-\Omega(1)},$$
by Observation~\ref{obs:p:tothe:r}, as required. Finally, if they share $r - 1$ vertices then, since no other steps are ordinary, our cluster is not normal. 
\end{proof}

We can now easily deduce the claimed bounds on $Q_i(H)$ and $\Delta_i$. 

\begin{proof}[Proof of Lemma~\ref{lem:pconstant:error:bound}]
Let $H$ be plausible, and recall from Definition~\ref{plausdef} that $t_k(H) = O^*(\nu_k)$ for all $0 \le k \le r - 1$. By Lemmas~\ref{lem:ExQi} and~\ref{quadcor}, it follows that 
$$Q_i(H) = O^*\big( \Ex\big[ Q_i\big( H_r(G(n,p)) \big) \big] \big) = O^*(\Delta_i)$$
for each $i \in \{2,3,4\}$, as claimed. 

To bound $\Delta_k$ for $k \in \{3,4\}$, note first that, by Lemma~\ref{G34normal} and since $\theta \le 1/2 - \eps$, the expected number of normal $k$-clusters is at most $O^*(n^\theta) + n^{-\Omega(1)}$. Moreover, every non-normal $k$-cluster forms a copy of $K_{r+1}$ in $G(n,p)$. Since the expected number of copies of $K_{r+1}$ in $G(n,p)$ is
\begin{equation}\label{eq:murplusone}
\mu_{r+1} = {n \choose r+1} p^{{r+1 \choose 2}} \le p^r n \cdot \mu_r = O^*( n^{\theta} ),
\end{equation}
by Observation~\ref{obs:p:tothe:r}, and each copy of $K_{r+1}$ contains $O^*(1)$ different $k$-clusters, it follows that 
$$\Delta_k \le O^*( n^{\theta} ) + n^{-\Omega(1)},$$
as required.
\end{proof}

Finally, we need to show that $H_r(G(n,p))$ is plausible with high probability. 

\pagebreak

\begin{lemma}\label{lem:plausible:whp}
$H_r(G(n,p))$ is plausible with high probability.
\end{lemma}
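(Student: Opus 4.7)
The plan is to verify each of the three conditions in Definition~\ref{plausdef} separately, using moment methods. The common input is that every $\nu_k$ can be estimated explicitly from the hypothesis $\mu_r = \Theta(n^{1+\theta})$ together with Observation~\ref{obs:p:tothe:r}, which implies $p^{-\binom{k}{2}} = n^{k(k-1)/r + o(1)}$ in the present regime.

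For~\eqref{plausible:tkHsmall}, I would simply apply Markov's inequality to $t_k(H_r(G(n,p)))$, which has mean $\nu_k$: for each of the four values $k \in \{0,1,2,r-1\}$ we obtain $\Pr\bigl(t_k > (\log n)^C \nu_k\bigr) \le (\log n)^{-C} = o(1)$, and a union bound over four terms finishes this part.

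For~\eqref{plausible:tkHzero}, it suffices by Markov to show $\sum_{k=3}^{r-2} \nu_k = n^{-\Omega(1)}$. A short calculation using $\binom{n-r}{r-k}/\binom{n}{r} = (r/n)^{k+o(1)}$ and Observation~\ref{obs:p:tothe:r} gives, for $3 \le k \le r-2$,
\[
 \nu_k/\mu_r^2 \,=\, n^{-k(r-k+1)/r + o(1)}.
\]
The exponent $-k(r-k+1)/r$ is concave in $k$ and is maximized over $3 \le k \le r-2$ at the endpoints, where it equals $-3+O(1/r)$. Hence $\nu_k = O^*(\mu_r^2 n^{-3}) = O^*(n^{2\theta-1}) = O^*(n^{-2\eps})$ uniformly in $k$, and summing over the at most $r = O(\log n)$ values of $k$ yields $n^{-\Omega(1)}$, as required.

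For~\eqref{plausible:edges}, I would apply Chebyshev's inequality using the variance bound $\Var[e(H_r(G(n,p)))] \le \mu_r + 2\Delta_2$ from~\eqref{eq:easy:variance:I}, where $\Delta_2 = \sum_{k=2}^{r-1}\nu_k$. The estimates above show that the intermediate terms are negligible, while a direct computation using the same inputs gives $\nu_2 = \Theta^*(n^{2\theta})$ and $\nu_{r-1} = \Theta^*(n^\theta)$. Therefore $\Var[e(H_r(G(n,p)))] = O^*(n^{1+\theta})$ (using $\theta \le 1$), and Chebyshev gives
\[
 \Pr\bigl(|e(H_r(G(n,p))) - \mu_r| > n^{1-\delta}\bigr) = O^*(n^{\theta-1+2\delta}).
\]
The constraints $\theta \le 1/2 - \eps$ and $\delta < 1/4$ yield $\theta + 2\delta < 1$ strictly, so this is $n^{-\Omega(1)}$, and a final union bound over the three conditions completes the proof. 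The only real bit of work is the estimate of $\nu_k/\mu_r^2$ and identifying where its maximum occurs; the rest is routine application of Markov and Chebyshev.
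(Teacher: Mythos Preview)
Your approach is essentially the same as the paper's: Markov for~\eqref{plausible:tkHsmall}, convexity of $\log\nu_k$ together with endpoint bounds for~\eqref{plausible:tkHzero}, and Chebyshev via the variance bound $\mu_r+2\Delta_2$ for~\eqref{plausible:edges}. Two small slips are worth noting but do not affect the argument: the quadratic $-k(r-k+1)/r$ is \emph{convex} in $k$ (not concave), which is precisely why it is maximised at the endpoints; and your displayed formula for $\nu_k/\mu_r^2$ drops the factor $\binom{r}{k}$ and a $-\theta$ in the exponent (the paper finds $\nu_{r-2}=O^*(n^{\theta-1})$ rather than $O^*(n^{2\theta-1})$), but since at both endpoints $\binom{r}{k}=O^*(1)$ and both exponents are strictly negative under $\theta\le 1/2-\eps$, the conclusion $\sum_{k=3}^{r-2}\nu_k=n^{-\Omega(1)}$ is unaffected.
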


\begin{proof}
We start by bounding $\nu_k$, defined in~\eqref{nusdef}, first noting that
$$\nu_3 = O\big( \mu_r^2 \cdot (r^2 / n)^3 \big) = O^*(n^{2\theta-1}) \quad \text{and} \quad \nu_{r-2} = O\big( \mu_r \cdot r^2 n^2 p^{2r} \big) = O^*(n^{\theta-1}),$$
since $\mu_r = \Theta(n^{1+\theta})$ and $p^r = O^*(n^{-2})$, by Observation~\ref{obs:p:tothe:r}. By convexity, and since $\theta\le 1/2-\eps$, it follows that $\nu_k = n^{-\Omega(1)}$ for all $3 \le k \le r-2$. Since $\nu_k = \Ex\big[ t_k\big( H_r(G(n,p)) \big) \big]$,
the bounds~\eqref{plausible:tkHsmall} and~\eqref{plausible:tkHzero} thus both hold for $H_r(G(n,p))$ with high probability by Markov's inequality. To show that~\eqref{plausible:edges} also holds with high probability, it will suffice to show that $\Delta_2 = O(\mu_r)$, since $e(H)$ has expectation $\mu_r = O(n^{3/2})$ and variance at most $\mu_r + 2\Delta_2$, by~\eqref{eq:easy:variance:I}. To see this, note that 
$$
 \nu_2 = O\big( \mu_r^2 \cdot (r^2 / n)^2 \big) = O^*(n^{2\theta}) \quad \text{and} \quad \nu_{r-1} = O\big( \mu_r \cdot r n p^{r} \big) = O^*(n^{\theta})
$$
so
\begin{equation}\label{eq:Delta2:sumnus}
\Delta_2  = \sum_{k = 2}^{r-1} \nu_k = \nu_2 + O^*\big( n^\theta+n^{2\theta-1} \big) =  O^*\big( n^{2\theta} + n^{\theta} \big),
\end{equation}
and recall that $\mu_r = \Theta(n^{1+\theta})$ and $\theta \le 1/2$. By Chebyshev's inequality, it follows that~\eqref{plausible:edges} holds with high probability, as required.
\end{proof}

We are now ready to prove Theorem~\ref{thgen}.

\begin{proof}[Proof of Theorem~\ref{thgen}]
Let $H$ be a plausible $r$-uniform hypergraph, and note that $e(H) = O(\mu_r)$, by~\eqref{plausible:edges} and since $\mu_r = \Theta(n^{1+\theta})$ and $\theta \ge -1+\eps$. Since (as noted above) $\phi \Delta_2 = O(\Delta_3)$, we may apply Theorem~\ref{thmain}, and deduce that
$$\Pr\big( H_r(G(n,p)) = H \big) \le \pi^{e(H)} (1-\pi)^{N - e(H)} p^{-t(H)} e^{- ( \frac{2e(H)}{\mu_r} - 1 ) ( \Delta_2 - \Delta_2^0 ) + \eta(H)},$$
where $N = {n \choose r}$ and
\[ 
\eta(H) = O\Big( \Delta_3 + \phi \cdot Q_2(H) + Q_3(H) + Q_4(H) + \pi^2 N \Big). 
\]
Moreover, by Lemma~\ref{lem:pconstant:error:bound}, and since $\pi^2 N = n^{-\Omega(1)}$, we have
$$\eta(H) = O^*\big( \Delta_3 + \Delta_4 + n^{-\Omega(1)} \big) = O^*(n^\theta) + n^{-\Omega(1)},$$
and by~\eqref{def:Delta20},~\eqref{nus0def} and~\eqref{eq:Delta2:sumnus} we have 
$$\Delta_2 = \nu_2 + O^*(n^{\theta}) + n^{-\Omega(1)} \qquad \text{and} \qquad \Delta_2^0 = \sum_{k = 2}^{r-1} \nu_k^0 = \nu_2^0 + n^{-\Omega(1)}.$$
By~\eqref{plausible:edges}, and observing that $n^{1-\delta} \cdot \Delta_2 / \mu_r = O^*( n^{\theta}) + n^{-\Omega(1)}$ for every $\delta > 0$, since $\Delta_2 = O^*( n^{2\theta}+  n^{\theta})$, it follows that
\begin{equation}\label{eq:eHmu:nunu:calc}
\bigg( \frac{2e(H)}{\mu_r} - 1 \bigg) \big( \Delta_2 - \Delta_2^0 \big) = \nu_2 - \nu_2^0 + O^*( n^{\theta}) + n^{-\Omega(1)},
\end{equation}
and hence we obtain the claimed bound. Finally, recall from Lemma~\ref{lem:plausible:whp} that $H_r(G(n,p))$ is plausible with high probability.
\end{proof}

\subsection{Lower bound for $p$ constant}\label{sslbg}

In this subsection we will complete the proof of Theorem~\ref{thgen:intro}. To do so we will use Theorem~\ref{th2way}, as in Section~\ref{sec:constant:r:proof}, but here we will need to work harder to bound the term $C(H)$ (the `complex' error term), since if $k = {r \choose 2}$ (say) then $\Delta_k$ is very large (to see this, consider sets of $k$ copies of $K_r$ all within a single copy of $K_{2r}$), so the bound given by Lemma~\ref{CYbd1} is too weak. However, such dense configurations simply do not appear in $H_r(G(n,p))$, and we will therefore be able to carry out the strategy outlined at the end of Section~\ref{seclower}.

Our first task is to define the family $\cG$ of hypergraphs for which we will prove the claimed lower bound; doing so will require a little preparation. The first step is to describe the set of configurations of copies of $K_r$ that typically appear in $G(n,p)$ when $\mu_r = n^{3/2 - \eps}$.


\begin{definition}\label{defL}
A cluster is \emph{legal} if it is either contained in a copy of $K_{r+1}$, consists of a pair of copies of $K_r$ sharing exactly $2$ vertices, or is a $3$-cluster formed by cliques whose vertex sets $S_1,S_2,S_3$ satisfy
$$|S_1 \cap S_2| = |S_1 \cap S_3| = 2 \qquad \text{and} \qquad \big( S_2 \cap S_3 \big) \setminus S_1 = \emptyset.$$
We say that a graph $G$, or the corresponding $r$-uniform hypergraph $H$, is \emph{legal} if it contains only legal clusters.
\end{definition}

We will show below that, if $p$ and $r$ are chosen as in Theorem~\ref{thgen:intro}, then $G(n,p)$ is legal with high probability (see Lemma~\ref{Llegal}). 

Next, recall from Definition~\ref{defsc} that a star-cluster is a collection of at least three copies of $K_r$, one of which (the `centre') is not contained in the union of others (the `leaves'), and if there are at least three leaves, then each leaf intersects the centre in some edge that is not contained in any of the other leaves. Here we will need the following slightly more restrictive notion. 

\begin{definition}\label{lsc}
We say that a star-cluster $\cS$ is \emph{legal} if the graph formed by the union of its leaves is legal.
\end{definition}

As in Section~\ref{seclower}, we say that the star-cluster $\cS$ is \emph{pre-present} in $H$ if all of its leaves correspond to edges of $H$, and \emph{present} if the centre also corresponds to an edge of $H$. Now, define 
\begin{equation}\label{def:hatCLH}
\hCL(H) = \sum_{\cS\text{ legal}} \1\big[ \cS \text{ is pre-present in } H \big] \cdot \pi_c(\cS),
\end{equation}
where $\pi_c(\cS)$ denotes the probability that $\cS$ is present in $H_r(G(n,p))$, conditioned on the event that $\cS$ is pre-present in $H_r(G(n,p))$. The following variant of Observation~\ref{obs:CChat} follows from the same proof. 

\begin{obs}\label{obs:CCL}
$C(H) \le \hCL(H)$ for every possible and legal hypergraph $H$. 
\end{obs}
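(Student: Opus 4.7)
The plan is to follow the proof of Observation~\ref{obs:CChat} essentially verbatim, and then add a short check that the star-cluster produced there is in fact legal whenever $H$ is. Since the earlier proof already establishes the bound $C(H) \le \hC(H)$ with exactly the same construction, all the new work is about replacing the sum over all star-clusters by the sum over legal star-clusters.

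Concretely, I would fix a possible and legal hypergraph $H$ and set $Y = Y(H)$. For each complex $j \in Y^\cc$, take $T = T_j$ to be a minimal subset of $Y$ of size at least two such that $E_j \cap R(Y) \subset R(T)$; such a $T$ exists because $j$ is complex. As in the proof of Observation~\ref{obs:CChat}, possibility of $Y$ gives $E_j \not\subset R(Y) \supset R(T)$, and minimality of $T$ gives $E_i \cap E_j \not\subset R(T \setminus \{i\})$ for every $i \in T$ whenever $|T| \ge 3$; so $\cS_j = (j, T_j)$ is a star-cluster in the sense of Definition~\ref{defsc}, pre-present in $H$, and $\Pr(A_j') = \pi_c(\cS_j)$.

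The new ingredient, and the only real content of the observation, is to verify that $\cS_j$ is legal in the sense of Definition~\ref{lsc}, i.e.\ that the graph $G_T = \bigcup_{i \in T} E_i$ is legal. For this, let $G$ be the underlying graph of $H$; since $T \subset Y = Y(H)$, each $E_i$ ($i \in T$) is an edge set of a copy of $K_r$ in $G$, and so $G_T \subset G$. The key point is that the relation $i \sim j$ on copies of $K_r$ depends only on their vertex sets (and on whether the corresponding copies are present), and every copy of $K_r$ in $G_T$ is also a copy of $K_r$ in $G$; hence any cluster of copies of $K_r$ in $G_T$ is also a cluster in $G$. Since $G$ is legal by hypothesis, every such cluster is legal, and therefore $G_T$ is legal.

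Putting the two pieces together, the map $j \mapsto \cS_j$ sends each complex $j$ to a legal star-cluster that is pre-present in $H$, and $\Pr(A_j') = \pi_c(\cS_j)$, so
\[
C(H) \;=\; \sjc \Pr(A_j') \;\le\; \sum_{\cS \text{ legal}} \1\big[\cS \text{ pre-present in } H\big] \cdot \pi_c(\cS) \;=\; \hCL(H),
\]
which is the claim. The main (and only) obstacle is the legality check, and it is essentially free once one notices that both legality of a graph and the cluster relation are preserved under passing to subgraphs in the relevant sense; no new estimates are required.
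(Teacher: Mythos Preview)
Your proof is correct and takes essentially the same approach as the paper. The paper's proof is simply the one-sentence observation that if $H$ is legal then any star-cluster pre-present in $H$ is legal; you have spelled out the (easy) reason why, namely that the leaf-graph $G_T$ sits inside $G$, so every cluster in $G_T$ is already a cluster in $G$ and hence legal by hypothesis.
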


\begin{proof}
To deduce the claimed inequality from the proof of Observation~\ref{obs:CChat}, note that if $H$ is legal, then any star-cluster that is pre-present in $H$ is also legal. 
\end{proof}

We can now define the family of hypergraphs $\cG$ for which we will prove the lower bound in Theorem~\ref{thgen:intro}. 

\begin{definition}\label{defHgood}
Let $C > 0$ be an arbitrary constant. An $r$-uniform hypergraph $H$ with $n$ vertices is \emph{reasonable} if $H$ is plausible, $H$ is legal, and 
$$\hCL(H) \le (\log n)^C \, \Ex\big[ \hCL\big( H_r( G(n,p) ) \big) \big].$$ 
\end{definition}

Theorem~\ref{thgen:intro} is an immediate consequence of the following theorem. 

\begin{theorem}\label{th2way2}
Let $\eps > 0$ be constant, let $-1 + \eps \le \theta = \theta(n) \le 1/2 - \eps$, and suppose that $p = p(n) \in (\eps,1-\eps)$ and $r = r(n) \sim 2\log_{1/p}(n)$ are chosen so that~$\mu_r = \Theta(n^{1+\theta})$. Then 
$$\Pr\big( H_r(G(n,p)) = H \big) = \pi^{e(H)} (1-\pi)^{{n \choose r} - e(H)} p^{-t(H)} e^{-\nu_2+\nu_2^0 + O^*(n^\theta) + n^{-\Omega(1)}}$$
for every reasonable $H$, and $H_r(G(n,p))$ is reasonable with high probability.
\end{theorem}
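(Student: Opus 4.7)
The upper bound in Theorem~\ref{th2way2} is already contained in Theorem~\ref{thgen}, since every reasonable hypergraph is plausible by Definition~\ref{defHgood}. The plan for the matching lower bound is to apply Theorem~\ref{th2way} in place of Theorem~\ref{thmain} to the $\binom{r}{2}$-uniform hypergraph of Section~\ref{sec:constant:r:proof}. The hypotheses of Theorem~\ref{th2way} beyond those already checked in the proof of Theorem~\ref{thgen} are $1-p = \Omega(1)$ (immediate) and $\Delta_2^0 \le \mu_r/4$ (which follows from $\Delta_2^0 = O^*(n^{2\theta}) = o(\mu_r)$ since $\theta \le 1/2 - \eps$); possibility of $Y(H)$ holds because reasonable (in particular, legal) hypergraphs arise as $H_r(G)$ for a uniquely determined graph $G$. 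Running through the calculation in the proof of Theorem~\ref{thgen} then yields the stated bound, modulo the extra error contribution $C(H)$ appearing in $\eta'(H)$, so it remains to show that $C(H) = O^*(n^\theta) + n^{-\Omega(1)}$ for every reasonable $H$.

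By Observation~\ref{obs:CCL}, $C(H) \le \hCL(H)$ for every possible legal $H$, and by Definition~\ref{defHgood} reasonable hypergraphs satisfy $\hCL(H) \le (\log n)^C \, \Ex\bsq{\hCL(H_r(G(n,p)))}$; so it suffices to prove that $\Ex\bsq{\hCL(H_r(G(n,p)))} = O^*(n^\theta) + n^{-\Omega(1)}$. This expectation equals $\sum_{\cS \text{ legal}} \pi_1(\cS)$, the expected number of present legal star-clusters. Since the leaves of a legal star-cluster form a legal graph (Definition~\ref{lsc}), the overall configuration is of one of three types: (i) its vertex set is contained in some copy of $K_{r+1}$ (total contribution $O^*(\mu_{r+1}) = O^*(n^\theta)$ by Observation~\ref{obs:p:tothe:r}); (ii) it is a 3-cluster whose two leaves share exactly 2 vertices (total contribution $O(\Delta_3)$); or (iii) it is a 4-cluster whose three leaves realise the legal 3-cluster pattern from Definition~\ref{defL} (total contribution $O(\Delta_4)$). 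The required estimate then follows from Lemma~\ref{lem:pconstant:error:bound}, which gives $\Delta_3, \Delta_4 = O^*(n^\theta) + n^{-\Omega(1)}$.

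To conclude the proof of the theorem we must show that $H_r(G(n,p))$ is reasonable whp. Plausibility holds by Lemma~\ref{lem:plausible:whp}, and the inequality $\hCL(H_r(G(n,p))) \le (\log n)^C \Ex\bsq{\hCL(H_r(G(n,p)))}$ follows from Markov's inequality, so the remaining task is to prove the promised Lemma~\ref{Llegal}: $G(n,p)$ is legal whp. Plausibility already forces all pairwise clique intersections in $H_r(G(n,p))$ to have size in $\{0, 1, 2, r-1\}$, so the legal 2-clusters (those sharing 2 or $r-1$ vertices) cover all 2-clusters; hence we need only rule out illegal $k$-clusters for $k \ge 3$. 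Enumerating these by their intersection sequence from $\{0, 2, r-1\}$ and by the relevant intersection-pattern data (for example, whether $S_2 \cap S_3 \subseteq S_1$), a direct $n^v p^e$ count using $p^r = \Theta^*(n^{-2})$ (Observation~\ref{obs:p:tothe:r}) and $\mu_r = \Theta(n^{1+\theta})$ shows that each illegal type has expected count of the form $O^*(n^{a\theta - b})$ with $b \ge 1$, which is $n^{-\Omega(1)}$ under $\theta \le 1/2 - \eps$. For instance, a $(2,2,2)$-intersecting 3-cluster with $S_2 \cap S_3$ disjoint from $S_1$ has $3r - 6$ vertices and $3\binom{r}{2} - 3$ edges, yielding $\Theta^*(n^{3\theta - 3})$. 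Summing over the $O^*(1)$ isomorphism classes and applying Markov's inequality then completes the proof.

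The main obstacle is this case analysis in Lemma~\ref{Llegal}: although each individual case is a one-line vertex-and-edge computation, the list of illegal cluster isomorphism types (particularly for $k \ge 4$) is long, and the restriction $\theta \le 1/2 - \eps$ is essentially sharp, so each configuration must be verified to lie strictly within the range where the $n^v p^e$ estimate yields $n^{-\Omega(1)}$.
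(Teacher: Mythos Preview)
Your overall architecture is right: reduce to Theorem~\ref{th2way}, reuse the calculations from Theorem~\ref{thgen}, and control the extra term $C(H)$ via Observation~\ref{obs:CCL} and an expectation bound on $\hCL$. The hypotheses of Theorem~\ref{th2way} are indeed easy in this regime.

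There is, however, a genuine gap in your bound on $\Ex\bsq{\hCL(H_r(G(n,p)))}$. Your trichotomy (i)--(iii) does not exhaust the legal star-clusters. Definition~\ref{lsc} only asks that the graph formed by the \emph{leaves} be legal, i.e.\ that every cluster occurring among the leaves lie in the list of Definition~\ref{defL}. It does \emph{not} force the leaves to form a single cluster, nor does it bound $|T|$. For example, take a centre $E_j$ and any $|T|\ge 2$ leaves each sharing exactly two vertices with $E_j$ and pairwise sharing at most one vertex; the leaf-graph then contains no $2$-clusters at all and is trivially legal, yet the resulting star-cluster has $|T|+1$ cliques and is neither contained in a $K_{r+1}$ nor a $3$- or $4$-cluster. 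Such configurations contribute to $\hCL$ but are not captured by $\Delta_3$ or $\Delta_4$, so your appeal to Lemma~\ref{lem:pconstant:error:bound} does not close the argument. Even within $|T|=2$ and $|T|=3$ your cases are too narrow: with two leaves you have only treated the subcase where the leaves share exactly two vertices, omitting e.g.\ leaves that are edge-disjoint from one another, or that sit together in a $K_{r+1}$ not containing the centre.

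The paper deals with this by a different device (Lemma~\ref{lEX'}): it partitions the leaves of an ordinary legal star-cluster into \emph{groups}, where any maximal set of leaves lying in a common $K_{r+1}$ forms one group and every remaining leaf is a singleton group. Legality then limits each group to at most three cliques and forces distinct groups to share at most two graph edges, so one can build the star-cluster group by group, picking up a factor $O(\Delta_2/\mu_r)=O^*(n^{\hb-1})$ per singleton group and $O^*(n^{\hb-2})$ per non-singleton group; summing the resulting geometric-type product over all group patterns gives $O^*(n^{3\hb-1})$, which is what is needed. Some argument of this shape, rather than a bound by a single $\Delta_k$, is required.

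On Lemma~\ref{Llegal}: your plan to enumerate all illegal isomorphism types would work in principle, but is unnecessarily heavy. The paper avoids the ``long list'' you anticipate by arguing structurally: once pairwise intersections are restricted to sizes $2$ and $r-1$, any illegal cluster forces either a $K_{r+1}$ together with a further $K_r$ meeting it in $2$ to $r-1$ vertices (expectation $O^*(\Delta_2/n)$), or a $3$-cluster with $(S_2\cap S_3)\setminus S_1\ne\emptyset$ (expectation $O^*(\mu_r^3/n^5)$), or a $4$-cluster with all pairwise overlaps of size at most $2$ (expectation $O^*(\mu_r^4/n^6)$). Each of these is $o(1)$ for $\theta\le 1/2-\eps$, and no further case analysis is needed.
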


We will deduce Theorem~\ref{th2way2} from Theorem~\ref{th2way}. Let us assume that $\eps > 0$ is fixed, and that $p \in (\eps,1-\eps)$, $r \in \N$ and $\theta \in \R$ satisfy the assumptions of the theorem, and recall from the previous section that the conditions of Theorem~\ref{thmain} are satisfied. Note also that $1 - p \ge \eps$ by assumption, and that \begin{equation}\label{eq:th2way:conditions:check:pconstant}
\Delta_2^0 = O^*\bigg( \frac{\mu_r^2}{n^2} \bigg) = o(\mu_r),
\end{equation}
since $\mu_r = O(n^{3/2})$. We may therefore apply Theorem~\ref{th2way} to the ${r \choose 2}$-uniform hypergraph encoding copies of $K_r$ in $E(K_n)$. Since, by Definition~\ref{defHgood}, every reasonable hypergraph $H$ is plausible, it follows from the proof of Theorem~\ref{thgen} (in particular from Lemma~\ref{lem:pconstant:error:bound} and~\eqref{eq:eHmu:nunu:calc}) that the claimed bound holds up to a factor of $\exp\big( O(C(H)) \big)$. By Observation~\ref{obs:CCL} and Definition~\ref{defHgood}, it will therefore suffice to prove the following lemma. Recall that $\hb = \max\{\theta,0\}$.


\begin{lemma}\label{lEX'}
$$\Ex\big[ \hCL\big( H_r( G(n,p) ) \big) \big] = O^*\big( n^\theta + n^{3\hb-1} \big).$$
\end{lemma}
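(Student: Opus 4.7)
The plan is to rewrite the expectation as
\[
\Ex\big[ \hCL\big(H_r(G(n,p))\big) \big] \,=\, \sum_{\cS \text{ legal}} \Pr\big(\cS \text{ pre-present}\big) \cdot \pi_c(\cS) \,=\, \sum_{\cS \text{ legal}} p^{|E_j \cup R(T)|},
\]
which is simply the expected number of legal star-clusters $\cS=(j,T)$ all of whose cliques are simultaneously present in $G(n,p)$. I would then partition this sum by the value of $|T|$, and within each class by whether the full configuration $\{E_j\}\cup\{E_i:i\in T\}$ fits inside a single copy of $K_{r+1}$. The single most useful observation is that if $|T|\ge 3$ and every clique lies inside an $(r+1)$-set $V^*$, then writing the leaves as $V^*\setminus\{v_\ell\}$, an edge $\{u,w\}\subset V^*$ can fail to lie in $R(T)$ only if $\{v_1,\ldots,v_{|T|}\}\subseteq\{u,w\}$, which is impossible; hence $R(T)=E(K_{r+1})$ and $E_j\subset R(T)$, contradicting the star-cluster condition. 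So for $|T|\ge 3$ the center together with the leaves can never all fit inside a common $K_{r+1}$.

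For $|T|=2$, if the three cliques do lie inside a common $K_{r+1}$, then any three distinct $r$-subsets of $V^*$ cover every edge of $K_{r+1}$, so $p^{|E_j\cup R(T)|}=p^{\binom{r+1}{2}}$; each $K_{r+1}$ hosts only $(r+1)\binom{r}{2}=O^*(1)$ star-cluster structures, so the total contribution here is at most $O^*(\mu_{r+1})=O^*(n^\theta)$ by Observation~\ref{obs:p:tothe:r}. If instead the three cliques do not share a common $K_{r+1}$, then $\{E_j\}\cup T$ is a normal $3$-cluster in the sense of Lemma~\ref{G34normal}, which admits at most three star-cluster structures (one per choice of center), so Lemma~\ref{G34normal} with $k=3$ contributes $O^*(n^{3\hb-1})+n^{-\Omega(1)}$.

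For $|T|=3$ the opening observation already excludes the case where all four cliques share a $K_{r+1}$. If the three leaves still sit inside some $K_{r+1}$ on vertex set $V^*$ but the center does not, then the minimality condition in Definition~\ref{defsc} (analysed exactly as in the $|T|=2$-in-$K_{r+1}$ case above) forces the three vertices $v_1,v_2,v_3$ excluded by the leaves to lie in $V(E_j)\cap V^*$, so $|V(E_j)\cap V^*|\ge 3$; a direct vertex/edge count then bounds this sub-case by $O^*(\mu_{r+1}\cdot n^{1+\theta-3})=O^*(n^{2\theta-2})$, which is absorbed into $O^*(n^{3\hb-1})$. Otherwise the leaves are not all in a common $K_{r+1}$, the $4$-cluster formed by the star-cluster has at least $r+2$ vertices, and Lemma~\ref{G34normal} with $k=4$ gives $O^*(n^{3\hb-1})+n^{-\Omega(1)}$.

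For $|T|\ge 4$ I would extend the step-by-step probability estimates from the proof of Lemma~\ref{G34normal} to $(k+1)$-clusters with $k+1\ge 5$. Listing the $1+|T|$ cliques in an order where each after the first shares an edge with the union of the earlier ones, each ordinary step (introducing a new vertex) multiplies the expected count by $O^*(n^{\hb-1})$ while each non-ordinary step multiplies it by $O^*(1)$; since the opening observation forces at least two ordinary steps whenever the configuration exits a fixed $K_{r+1}$, this yields a bound of at most $O^*(\mu_r\cdot n^{2(\hb-1)})=O^*(n^{\theta+2\hb-1})\le O^*(n^{3\hb-1})$ on the total. The main obstacle will be controlling the polylogarithmic combinatorial overhead coming from the isomorphism types of $(k+1)$-clusters and the choices of re-used vertices at each step, which I would handle exactly as in Lemma~\ref{G34normal} via the \emph{strange intersection} device: any pair of cliques sharing between a constant $\lambda$ and $r-\lambda$ vertices costs a factor of $n^{-\lambda/2}$, so only configurations with bounded-complexity intersection patterns contribute, and the number of such patterns is $O^*(1)$.
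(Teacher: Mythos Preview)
Your approach for $|T|\le 3$ is correct and reasonably close to the paper's: you separate the ``all-in-$K_{r+1}$'' case (contributing $O^*(\mu_{r+1})=O^*(n^\theta)$) from normal $3$- and $4$-clusters bounded via Lemma~\ref{G34normal}. Your $|T|=3$ sub-case analysis (forcing $v_1,v_2,v_3\in V(E_j)$ via minimality) is sound, though you should note that $|V(E_j)\cap V^*|$ can range over $3\le t\le r-1$, not just $t=3$; the other values of $t$ are handled by the same convexity used in Lemma~\ref{G34normal} and are absorbed into $O^*(n^{3\hb-1})$.

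However, your $|T|\ge 4$ argument has a genuine gap. First, ``at least two ordinary steps'' is not justified: legality only constrains leaf--leaf intersections, not center--leaf ones, so the center $S$ and one leaf $L_1$ may share exactly $2$ vertices; then $|V(S)\cup V(L_1)|=2r-2$ and all remaining leaves can lie inside this set, leaving step~2 as the only ordinary step. Second, and more seriously, the strange-intersection device from Lemma~\ref{G34normal} does not extend to clusters with $O^*(1)$ cliques. In that proof the crude bound $m_i=n^{O(1)}$ is applied to at most three subsequent cliques, giving a total choice factor $n^{O(1)}$ with a fixed implied constant, and then $\lambda$ is chosen large enough to beat it. Here $|T|$ can be as large as $\binom{r}{2}=\Theta(\log^2 n)$, so the cumulative choice factor is $n^{O^*(1)}$, and no constant $\lambda$ suffices.

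The paper sidesteps both issues by processing leaves in \emph{groups} rather than one at a time: leaves contained in a common $K_{r+1}$ form a group (of size at most $3$, by minimality of the star-cluster), and every other leaf is a singleton. Starting from the center, adding a singleton group forms a $2$-cluster with it and contributes a factor $O(\Delta_2/\mu_r)=O^*(n^{\hb-1})$; adding a non-singleton group contributes $O^*((\Delta_2/\mu_r)\cdot np^r)=O^*(n^{\hb-2})$. Distinct groups share at most two edges, so the conditioning is harmless, and $|T|\ge 2$ forces either two singleton groups or one non-singleton group, giving $O^*(\mu_r\cdot n^{2\hb-2})=O^*(n^{3\hb-1})$. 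The grouping is precisely what keeps the combinatorial overhead at $O^*(1)$ per step; your clique-by-clique listing loses this.
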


We say that a star-cluster $\cS$ is \emph{special} if it consists of exactly three cliques, all of which are contained in a copy of $K_{r+1}$, and otherwise we say that $\cS$ is \emph{ordinary}. The first term in Lemma~\ref{lEX'} comes from special legal star-clusters, and the second from ordinary ones. Note that in an ordinary legal star cluster, no two of the leaves can be contained in a copy of $K_{r+1}$ that also contains the centre of $\cS$, since otherwise these two leaves would already cover all but one edge of the centre, so by the definition of a star-cluster there can be no further leaves in $\cS$, and hence $\cS$ is special.

\begin{proof}[Proof of Lemma~\ref{lEX'}]
Observe first that
\[
 \Ex\big[ \hCL\big( H_r(G(n,p)) \big) \big] = \sum_{\cS\text{ legal}} \Pr\big( \cS\text{ is present in } G(n,p) \big).
\]
The contribution to the sum from special legal star-clusters is 
$$O^*(\mu_{r+1}) = O^*(n^\theta),$$ 
by~\eqref{eq:murplusone}, and since the union of the cliques in $\cS$ is a copy of $K_{r+1}$, and there are $O^*(1)$ ways to choose the centre and the leaves inside a given copy of $K_{r+1}$. From now on we consider only ordinary legal star-clusters.

As usual, we will partition the sum according to the isomorphism type of the ordinary legal star-cluster $\cS$. For a fixed type, we partition the leaves of $\cS$ into groups: any set of leaves contained in a copy of $K_{r+1}$ forms a group, and every other leaf forms a group of size $1$. Note that each group consists of at most three cliques, since no leaf can be contained in the union of two or more other leaves, by the minimality of $\cS$ (see Definition~\ref{defsc}). Observe also that any group shares at most two edges with the union of all the other groups, since the non-singleton groups are edge-disjoint from the others, and each singleton group shares an edge (and no more) with at most two other singleton groups.

We now bound the expectation in the usual way, starting by choosing the centre, $S$, and then adding leaves one by one. Each time we add a singleton group, it forms a $2$-cluster with $S$, and conditioning on the other leaves being present only increases the probability by a factor of at most $p^{-2} = O(1)$, so we multiply the expectation by a factor of 
$$O\bigg( \frac{\Delta_2}{\mu_r} \bigg) = O( n^{\hb-1} ),$$
where the final inequality follows from~\eqref{eq:Delta2:sumnus}. Adding a non-singleton group is slightly more complicated; we claim that each time we add such a group, we multiply the expectation by a factor of 
\begin{equation}\label{eq:non:singleton}
O^*\bigg( \frac{\Delta_2}{\mu_r} \cdot np^r \bigg) = O^*( n^{\hb-2} ),
\end{equation}
where the final inequality holds by Observation~\ref{obs:p:tothe:r}. To see this, let $U$ be the vertex set of a non-singleton group, and note that (as we observed before the proof) $V(S)$ is not contained in $U$, since $\cS$ is ordinary. It follows that at most $r - 1$ vertices of $U$ are shared with the centre of $\cS$, and hence $U$ has at least two vertices outside $V(S)$. Let $u \in U \setminus V(S)$, and let $T$ be the copy of $K_r$ with vertex set $U \setminus \{u\}$. Note that $T$ is not necessarily a leaf of $\cS$; however, if $\cS$ is present in $G(n,p)$ then at least ${r \choose 2} - 1$ edges of $T$ are contained in $G(n,p)$. Moreover, if $\cS$ is present in $G(n,p)$ then at least $r - 1$ edges between $u$ and $V(T)$ are contained in $G(n,p)$, and none of these edges are contained in $S \cup T$, or in any of the cliques in different groups. Since there are only $O^*(1)$ choices for the leaves of $\cS$ given $U$, we obtain~\eqref{eq:non:singleton}. 

Finally, note that since $\cS$ has at least two leaves, we either have at least two singleton groups, or at least one non-singleton group. It follows that
\begin{equation}
 \sum_{\substack{\cS\text{ legal}\\ \text{and ordinary}}} \Pr\big( \cS\text{ is present in } G(n,p) \big) = O^*\big( \mu_r \cdot n^{2\hb-2} \big) = O^*(n^{3\hb-1}),
\end{equation}
as required.
\end{proof}

It only remains to show that $H_r(G(n,p))$ is reasonable with high probability. We have already shown that $H_r(G(n,p))$ is plausible with high probability, and the final condition of reasonableness in Definition~\ref{defHgood} holds by Markov's inequality. We therefore only need to prove the following lemma. 

\begin{lemma}\label{Llegal}
$G(n,p)$ is legal with high probability.
\end{lemma}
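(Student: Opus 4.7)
The plan is to apply Markov's inequality to the total count of bad (non-legal) clusters in $G(n,p)$. Reading Definition~\ref{defL} with ``contained in a copy of $K_{r+1}$'' meaning that the cluster's vertex set spans at most $r+1$ vertices, the bad clusters split into three classes: (i) 2-clusters with pairwise intersection of size $t$ for some $3 \le t \le r-2$; (ii) 3-clusters whose union has at least $r+2$ vertices and which are not of the special type from Definition~\ref{defL}; and (iii) $k$-clusters with $k \ge 4$ whose union has at least $r+2$ vertices.

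For class (i), the total expectation is $\sum_{t=3}^{r-2}\nu_t = n^{-\Omega(1)}$, using the bounds on $\nu_k$ from the proof of Lemma~\ref{lem:plausible:whp} together with convexity in $t$ and the hypothesis $\theta \le 1/2 - \eps$. For class (ii), I will refine the case analysis of Lemma~\ref{G34normal}: the dominant non-strange 3-clusters, in which every ordinary step attaches along a 2-vertex intersection and which contribute $\Theta^*(n^{3\theta-1})$ in total, turn out after a short check on the possible values of $|S_2 \cap S_3|$ and $|S_1 \cap S_2 \cap S_3|$ to be exactly the legal special type; every other bad 3-cluster shape either has a ``strange'' intersection or carries an extra factor of $n^{-\Omega(1)}$ relative to the dominant baseline, so the total expected count of bad 3-clusters is $n^{-\Omega(1)}$.

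The main obstacle will be class (iii). The natural reduction ``every bad $k$-cluster contains a bad 2- or 3-sub-cluster'' fails because of \emph{book} configurations, in which $k$ copies of $K_r$ pairwise meet in the same 2-vertex spine and are otherwise vertex-disjoint: every pair of a book is legal and every triple is the legal special 3-cluster, yet for $k \ge 4$ the book has union $2 + k(r-2) > r+1$ and is bad. I plan to show that books are the only obstruction via a short case split on whether the cluster contains a pair with $(r-1)$-vertex intersection. If it does, the assumption that every triple is legal rules out the special type in such a triple (for $r \ge 4$), which forces every other clique to lie in the pair's common $(r+1)$-vertex set $V$, and a quick counting argument then shows that three $K_r$'s inside $V$ force $G[V] = K_{r+1}$, so the whole cluster is contained in one $K_{r+1}$ and is legal. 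If the cluster has no $(r-1)$-intersections, all pairwise intersections have size $2$, and the special-type constraint applied to each triple pins them to a single 2-vertex set, forcing the cluster to be a book. A direct calculation yields the expected number of $k$-books as $\Theta^*(n^{k\theta-k+2})$ (a factor $\binom{n}{2}$ for the spine and $\binom{n}{r-2}p^{\binom{r}{2}-1} = \Theta^*(n^{\theta-1})$ per petal), which is $o(1)$ for $k \ge 4$ under $\theta \le 1/2-\eps$ and summable in $k$ to $o(1)$. Combining (i), (ii), and this bound via the union bound will complete the proof.
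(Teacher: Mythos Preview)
Your approach via Markov's inequality on bad-cluster counts is natural, and classes~(i) and~(ii) can be handled essentially as you outline. The genuine gap is in class~(iii), specifically your structural claim that a $k$-cluster ($k\ge 4$) with only legal sub-$2$- and sub-$3$-clusters and no $(r-1)$-intersections must be a book. This is false. Consider a \emph{path} $4$-cluster: cliques $A,B,C,D$ with $|A\cap B|=|B\cap C|=|C\cap D|=2$ and all other pairs vertex-disjoint. Every pair with non-empty intersection is a legal $2$-cluster. The only connected triples are $\{A,B,C\}$ and $\{B,C,D\}$, and each is legal of the special type (take $S_1$ to be the middle clique; then $(S_2\cap S_3)\setminus S_1=\emptyset$ trivially since $S_2\cap S_3=\emptyset$). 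Yet this $4$-cluster has $4r-6>r+1$ vertices and is illegal. Your reduction therefore misses all such tree-like configurations, and the book calculation alone does not cover them. The underlying error is that in a $k$-cluster only \emph{connected} triples form $3$-clusters, so ``every triple is of the special type'' imposes far fewer constraints than you assume.

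The paper sidesteps this classification entirely with a much shorter argument. After ruling out intermediate intersection sizes (your class~(i)), it handles the $(r-1)$-intersection case by a single expectation bound on configurations consisting of a $K_{r+1}^-$ plus one further $K_r$ attaching to it. For the remaining case where all pairwise intersections have size~$2$, it simply bounds the expected number of \emph{all} $4$-clusters with pairwise intersections of size at most~$2$ by $O^*(\mu_r^4/n^6)=O^*(n^{4\theta-2})=o(1)$, and separately bounds the expected number of the non-legal $3$-cluster shape by $O^*(\mu_r^3/n^5)=o(1)$. Since any illegal cluster of size $\ge 4$ with small intersections contains a connected $4$-subcluster of the same kind, this single estimate covers books, paths, and everything else simultaneously; no structural dichotomy is needed.
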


\begin{proof}
Recall first from the proof of Lemma~\ref{lem:plausible:whp} that 
$$\nu_3 + \cdots + \nu_{r-2} = o(1),$$
so with high probability the only pairs of copies of $K_r$ in $G(n,p)$ that share at least one edge share either $2$ or $r - 1$ vertices. Let $\cC$ be a cluster, and suppose that $\cC$ contains two copies of $K_r$ sharing $r-1$ vertices, say $E_1$ and $E_2$. Either $\cC$ is contained in a copy of $K_{r+1}$, or it contains a third copy of $K_r$ that shares between $2$ and $r - 1$ vertices with $E_1 \cup E_2$. The expected number of these latter configurations is at most
\[
O^*\bigg(  \binom{n}{r+1} p^{\binom{r+1}{2}} \cdot \frac{\Delta_2}{\mu_r} \bigg) = O^*\bigg( \frac{\Delta_2}{n} \bigg) = o(1),
\]
by Observation~\ref{obs:p:tothe:r} and~\eqref{eq:Delta2:sumnus}, so with high probability no such clusters exist.

It remains to consider clusters $\cC$ in which every pair of copies of $K_r$ share at most two vertices. If $\cC$ has size 2 then it is legal, and otherwise there exist three cliques in $\cC$ whose vertex sets $S_1,S_2,S_3$ satisfy
$$|S_1 \cap S_2| = |S_1 \cap S_3| = 2.$$
The expected number of such configurations with $\big( S_2 \cap S_3 \big) \setminus S_1 \ne \emptyset$ is 
$$O^*\bigg( \frac{\mu_r^3}{n^5} \bigg) = O^*(n^{3\theta - 2}) = o(1),$$ 
and the expected number of $4$-clusters with pairwise intersections of size at most~$2$ is
$$O^*\bigg( \frac{\mu_r^4}{n^6} \bigg) = O^*(n^{4\theta - 2}) = o(1),$$ 
so with high probability these also do not occur in $G(n,p)$, as required.
\end{proof}

Finally, let's put the pieces together and prove Theorem~\ref{th2way2}. 

\begin{proof}[Proof of Theorem~\ref{th2way2}]
The proof of Theorem~\ref{thgen}, 
together with the assumption that $1 - p \ge \eps$ and~\eqref{eq:th2way:conditions:check:pconstant}, which allows us to apply Theorem~\ref{th2way} in place of Theorem~\ref{thmain}, and the fact that every reasonable $H$ is plausible, implies that the claimed bound holds up to a factor of $\exp(O(C(H))$. 

To bound this extra error term, we use Observation~\ref{obs:CCL} and Lemma~\ref{lEX'}. Indeed, since $H$ is possible and legal, we have $C(H) \le \hCL(H)$, and hence 
$$C(H) \le \hCL(H) = O^*\Big( \Ex\big[ \hCL\big( H_r( G(n,p) ) \big) \big] \Big) = O^*\big( n^\theta + n^{-\Omega(1)} \big)$$ 
by Definition~\ref{defHgood} and Lemma~\ref{lEX'}, so we obtain the claimed two-way bound for every reasonable $H$. Finally, observe that $H_r(G(n,p))$ is reasonable with high probability by Lemmas~\ref{lem:plausible:whp} and~\ref{Llegal}, and Markov's inequality.
\end{proof}

\section{Clique factors in $G(n,p)$}\label{secappl}

One of the main motivations for comparing the hypergraph $H_r(G(n,p))$ with the binomial random hypergraph $H_r(n,\pi)$ is to study $K_r$-factors in $G(n,p)$, by relating them to matchings in $H_r(n,\pi)$, or in the model $H_r(n,m)$ where the number of hyperedges is fixed. The main focus of research has been on the threshold for such a factor/matching to exist, for $r \ge 3$ constant. While the original (extremely influential) paper of Johannson, Kahn and Vu~\cite{JKV} established the (coarse) threshold simultaneously in both models, the more recent extremely precise results of Kahn~\cite{Kahn-asymp,Kahn-hitting} were only proved directly for the hypergraph matching question (`Shamir's problem'); corresponding results for $K_r$-factors are deduced using the coupling of \cite{R-copies} (for the sharp threshold for $r\ge 4$) and the extension due to Heckel~\cite{Heckel-copies} for $r=3$. Remarkably, despite the apparent loss in the coupling,
Heckel, Kaufmann, M\"uller and Pasch~\cite{HKMP-hitting} managed to transfer even Kahn's hitting time result~\cite{Kahn-hitting} to the $K_r$-factor setting.

For both Shamir's problem and the $K_r$-factor problem, the threshold for existence is now extremely well understood. Here we turn to a question that has received less attention: once hypergraph matchings/clique factors exist, how~many are there? For simplicity, we will focus on the case where $p=p(n)$ or $m=m(n)$ is a (possibly small) constant factor above the threshold.

Our main aim in this subsection is to prove Theorem~\ref{thm:factorpweak}; however, we will begin by discussing matchings in random hypergraphs. Let $M(H)$ denote the number of perfect matchings in an $r$-uniform hypergraph $H$, and recall from Section~\ref{sec:intro:factors} that we write
\[
\Sigma(n,m) = \frac{n!}{r!^{n/r}(n/r)!} \frac{(m)_{n/r}}{(N)_{n/r}}
\]
for the expected number of such matchings in the random hypergraph $H_r(n,m)$ if $r \,|\, n$, where as usual $N = \binom{n}{r}$ is the number of possible hyperedges and $(m)_k = m(m-1)\cdots (m-k+1)$. Let us also write
\[
\Sigma(n,\pi) = \frac{n!}{r!^{n/r}(n/r)!} \pi^{n/r},
\]
for the corresponding expectation in the binomial random hypergraph $H_r(n,\pi)$, and let
\[
 m_r(n) = \frac{n \log n}{r} \qquad \text{and} \qquad \pi_r(n) = \frac{m_r(n)}{N}
\]
be the thresholds for a perfect matching to exist in the two models when $r \in \N$ is constant (note also that $\pi_r(n) \sim q_r(n)^{r \choose 2}$, where $q_r(n)$ was defined in~\eqref{p0def}). Observe that if $\pi = m/N$, then 
\begin{equation}\label{ratio}
 \frac{\Sigma(n,\pi)}{\Sigma(n,m)} = \frac{m^{n/r}}{N^{n/r}}\frac{(N)_{n/r}}{(m)_{n/r}}
 = \exp\left(\frac{n^2}{2r^2m}+O(n^3/m^2)+O(n/m)\right),
\end{equation}
which is $\exp(\Theta(n/\log n))$ when $m = \Theta(m_r(n))$ and $r = O(1)$.

Kahn~\cite[Theorem 1.2]{Kahn-asymp} gives a counting result for Shamir's problem, which can be strengthened to the following; see Section~\ref{ssK+}.

\begin{theorem}\label{thhm}
Let $r \ge 3$ and\/ $\eps > 0$ be fixed. If\/ $m \ge (1+\eps)m_r(n)$, then  
\[
 M(H_r(n,m)) = e^{o(n/\log n)} \, \Sigma(n,m)
\]
with high probability.
\end{theorem}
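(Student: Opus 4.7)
I would split the claimed identity into matching upper and lower bounds and handle them separately. The upper bound is essentially free: linearity of expectation gives $\E\bsq{M(H_r(n,m))} = \Sigma(n,m)$ exactly, so Markov's inequality yields $M(H_r(n,m)) \le \omega \cdot \Sigma(n,m)$ with high probability for any slowly growing $\omega = \omega(n) \to \infty$, and since $\log \omega = o(n/\log n)$ this suffices. So the only substantive content is the matching lower bound $M(H_r(n,m)) \ge e^{-o(n/\log n)} \Sigma(n,m)$ whp.

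The plan for the lower bound is to start from Kahn's Theorem~1.2 of~\cite{Kahn-asymp}, which provides an asymptotic count of perfect matchings in $H_r(n,m)$ above the threshold but with an $e^{-o(n)}$ error, and to strengthen the error term to $e^{-o(n/\log n)}$ by revisiting Kahn's entropy-based argument. The key heuristic is that at the threshold $m \sim m_r(n)$ an $e^{-o(n)}$ loss is unavoidable (a typical vertex is covered by only $O(\log n)$ edges, giving a per-vertex entropy defect of order $1$), but with a factor $(1+\eps)$ of slack above $m_r(n)$, the conditional probability that a random successive edge extends to a legal partial matching is bounded below by a constant rather than by a $1/\log n$ factor; this should turn the per-step entropy defect from $O(1)$ into $o(1/\log n)$ after standard truncation, improving the total error to $o(n/\log n)$.

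Concretely, the two steps would be: first, show $\E\bsq{\log M(H_r(n,m))} \ge \log \Sigma(n,m) - o(n/\log n)$ by adapting Kahn's sequential-exposure entropy calculation and tracking how the $(1+\eps)$-slack enters each step; second, establish a concentration statement for $\log M(H_r(n,m))$ around its expectation with window $o(n/\log n)$, which should follow by bounded-differences arguments. The main obstacle I expect is the first step: Kahn's argument is quite delicate, and extracting the sharper error requires reworking the bookkeeping uniformly in $m \ge (1+\eps) m_r(n)$ rather than applying the theorem as a black box. The concentration step should be comparatively routine.
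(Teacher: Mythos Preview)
Your upper bound is fine and matches the paper's one-line observation.

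The lower bound plan has a real gap at the point where the work actually happens. Kahn's argument in~\cite{Kahn-asymp} is not an entropy/sequential-exposure argument in the sense you describe; it is an edge-\emph{deletion} martingale. One starts from the complete $r$-graph, removes hyperedges one at a time, and tracks $\Phi_t$, the number of perfect matchings remaining. Writing $\xi_t$ for the (random) fraction of matchings killed at step $t$ and $\gamma_t = \E[\xi_t]$, one has $\Phi_{N-m} = \Phi_0\prod_t(1-\xi_t)$ and $\E[\Phi_{N-m}] = \Phi_0\prod_t(1-\gamma_t)$. The martingale part, $\sum_t(\xi_t-\gamma_t)$, is already $O(\sqrt{n/\log n})$ with high probability from Kahn's deterministic bound $\xi_t = O(\gamma_t)$. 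The \emph{entire} loss down to $o(n)$ in Kahn's paper comes from the second-order term $\sum_t O(\gamma_t^2)$ in expanding $\log(1-\xi_t)$, and one computes $\sum_t \gamma_t^2 = \Theta(n/\log n)$ exactly. So Kahn's argument already gives an $O(n/\log n)$ error, not merely $o(n)$; the task is to knock this down to $o(n/\log n)$. Your heuristic about the $(1+\eps)$ slack converting a $1/\log n$ success probability into a constant does not correspond to anything in this picture: $\gamma_t = (n/r)/(N-t+1)$ is what it is, independent of $\eps$.

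The paper's improvement is a specific variance refinement inside Kahn's framework: rather than bounding $\sum_t(\xi_t-\gamma_t)^2$ by $\sum_t O(\gamma_t^2)$, one shows that the conditional variance $V_t = \Var(\xi_t\mid\cF_{t-1})$ satisfies $V_t = o(\gamma_t^2)$, using an ingredient already present in~\cite{Kahn-asymp} (off a bad event, $\xi_t \sim \gamma_t$ with conditional probability $1-o(1)$, not just $\xi_t = O(\gamma_t)$). This gives $\sum_t V_t = o(n/\log n)$ and finishes. Your two-step decomposition into ``bound $\E[\log M]$'' and ``concentrate via bounded differences'' is not how this goes, and the second step is independently dubious: $\log M(H_r(n,m))$ has no useful edge-Lipschitz bound a priori (near the threshold a single edge can be pivotal), so a straight bounded-differences argument would not deliver an $o(n/\log n)$ window without essentially redoing the martingale analysis above.
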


Only the lower bound here is interesting; the upper bound is just Markov's inequality. An immediate corollary is the corresponding result for $H_r(n,\pi)$.

\begin{corollary}\label{cor:chm}
Let $r \ge 3$ and\/ $\eps > 0$ be fixed. If\/ $\pi \ge (1+\eps)\pi_r(n)$, then 
\[
 M(H_r(n,\pi)) = e^{o(n/\log n)} \, \Sigma(n,m)
\]
with high probability, where $m = \big\lfloor \binom{n}{r} \pi \big\rfloor$.
\end{corollary}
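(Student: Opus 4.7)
The plan is to deduce the corollary from Theorem~\ref{thhm} via the standard coupling of $H_r(n,\pi)$ with $H_r(n,m')$ obtained by conditioning on the number of hyperedges.

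First, I would let $M' = |E(H_r(n,\pi))| \sim \mathrm{Bin}(N,\pi)$ with $N = \binom{n}{r}$, and let $\omega = \omega(n) \to \infty$ arbitrarily slowly. Chernoff's inequality gives $|M' - N\pi| \le \omega\sqrt{N\pi}$ with high probability, and since $\pi \ge (1+\eps)\pi_r(n)$ and $\omega\sqrt{N\pi} = o(m_r(n))$, every typical value $m'$ of $M'$ satisfies $m' \ge (1+\eps/2) m_r(n)$ for $n$ large. Conditional on $M' = m'$, the hypergraph $H_r(n,\pi)$ is distributed as $H_r(n,m')$, so Theorem~\ref{thhm} applied in this conditional model yields $M(H_r(n,\pi)) = e^{o(n/\log n)} \Sigma(n, M')$ with high probability.

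It then remains to compare $\Sigma(n, M')$ with $\Sigma(n, m)$ for $m = \lfloor N \pi \rfloor$ on the Chernoff event. Using the standard expansion $\log (x)_{n/r} = (n/r)\log x - (n/r)^2/(2x) + O((n/r)^3/x^2)$ and writing $M' = m(1 + \delta)$ with $|\delta| = O(\omega/\sqrt{N\pi})$, the dominant term of $\log \bigl(\Sigma(n,M')/\Sigma(n,m)\bigr)$ is $(n/r)\delta$, which has order $\omega\sqrt{n/(r\log n)}$ (using $N\pi \ge m_r = n\log n/r$) and hence is $o(n/\log n)$ whenever $\omega$ grows slowly enough; the lower-order corrections $O((n/r)^2\delta/m)$ and $O((n/r)^3/m^2)$ are controlled by the same estimate. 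Combining with the previous step gives the claim.

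The only subtle point is uniformity: the conditional application of Theorem~\ref{thhm} is with $m'$ random, so one needs the high-probability statement there to hold with a uniform rate across the typical window $[N\pi - \omega\sqrt{N\pi},\, N\pi + \omega\sqrt{N\pi}]$. This is either implicit in the explicit rates of Kahn's proof, or is formally obtained by the decomposition $\Pr(\text{failure}) = \Pr(M' \text{ atypical}) + \sum_{m' \text{ typical}} \Pr(M' = m')\,\Pr(\text{bad} \mid M' = m')$, with the first term handled by Chernoff and each conditional probability in the second bounded (uniformly) via Theorem~\ref{thhm}.
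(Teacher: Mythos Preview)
Your proof is correct and follows essentially the same approach as the paper: condition on the number of hyperedges, apply Theorem~\ref{thhm} in the resulting $H_r(n,m')$ model, and then absorb the fluctuation $|M'-m|=O(\omega\sqrt{N\pi})$ into the error term via the estimate $\Sigma(n,m')/\Sigma(n,m)=\exp(O(n\,|m'-m|/m))$. The paper is slightly terser (taking a fixed window $m(1\pm n^{-1/2})$ and noting $\Sigma(n,\cdot)$ scales as $m^{O(n)}$, giving an $e^{O(\sqrt{n})}$ correction), but the argument is the same.
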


\begin{proof}
A standard argument: if we take $m_+ = (1+n^{-1/2}) m$ and $m_- = (1-n^{-1/2})m$, then $H_r(n,\pi)$ with high probability has between $m_-$ and $m_+$ edges, and conditional on having $m$ edges, it has the distribution of $H_r(n,m)$. The result now follows from Theorem~\ref{thhm}, since the dependence of $\Sigma(n,m)$ on $m$ scales (roughly) as $m^{O(n)}$, so $\Sigma(n,m_+)$ and $\Sigma(n,m_-)$ are both within a factor of $\exp(O(n^{1/2}))$ of $\Sigma(n,m)$.
\end{proof}

As we see here, translating between $H_r(n,m)$ and $H_r(n,\pi)$ is essentially trivial in either direction, since the property of containing a perfect matching is monotone. Note, however, that the statement for $H_r(n,\pi)$ involves the expectation in the model $H_r(n,m)$. By~\eqref{ratio}, the expected number of perfect matchings in $H_r(n,\pi)$ is larger by a factor of order $\exp\big( \Theta(n^2/m) \big)$, and this difference cannot be absorbed into the error term when $m = \Theta(n\log n)$.

This was for counting matchings in hypergraphs. What can we say about the number of $K_r$-factors in $G(n,m)$ or $G(n,p)$? As outlined above, the coupling results of~\cite{R-copies} and Heckel~\cite{Heckel-copies} give the following corollary of Theorem~\ref{thhm}. Here we need some upper bound on $p$ for the coupling to work well; presumably it is not needed for the result to be true. Recall that we write $F_r(G)$ for the number of $K_r$-factors in a graph $G$.  

\begin{corollary}\label{cor:factorl}
Let $\eps > 0$ and $r \ge 3$ be fixed. There exists $\gamma = \gamma(r) > 0$ such that if $p = p(n)$ satisfies $(1+\eps)q_r(n) \le p \le n^{-2/r + \gamma}$, then
\[
 F_r(G(n,p)) \ge e^{o(n/\log n)} \, \Sigma(n,m)
\]
with high probability, where $m = \big\lfloor \binom{n}{r}p^{\binom{r}{2}} \big\rfloor$.
\end{corollary}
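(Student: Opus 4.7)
The plan is to deduce Corollary~\ref{cor:factorl} from Corollary~\ref{cor:chm} via the couplings of Riordan~\cite{R-copies} (for $r \ge 4$) and Heckel~\cite{Heckel-copies} (for $r = 3$). Under the hypothesis $p \le n^{-2/r+\gamma}$ for $\gamma = \gamma(r) > 0$ chosen small enough, those results provide a sequence $\pi' = \pi'(n)$ with $\pi'/\pi = 1 - n^{-\Omega(1)}$ together with a coupling of $H_r(n,\pi')$ and $H_r(G(n,p))$ such that $H_r(n,\pi') \subset H_r(G(n,p))$ with high probability. The first step is to observe that a perfect matching in $H_r(n,\pi')$ corresponds to $n/r$ pairwise vertex-disjoint $r$-cliques in $G(n,p)$, i.e.\ a $K_r$-factor, and that distinct matchings yield distinct factors. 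Hence the coupling gives
\[
 F_r(G(n,p)) \;\ge\; M\big(H_r(n,\pi')\big)
\]
with high probability, reducing the problem to counting perfect matchings in $H_r(n,\pi')$.

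Next I would apply Corollary~\ref{cor:chm} to $H_r(n,\pi')$. Since $p \ge (1+\eps) q_r(n)$ and $\pi_r(n) \sim q_r(n)^{\binom{r}{2}}$, we have $\pi \ge (1 + \eps')\pi_r(n)$ for some $\eps' = \eps'(\eps,r) > 0$; because $\pi'/\pi \to 1$, this gives $\pi' \ge (1 + \eps'/2)\pi_r(n)$ for all large $n$, so the hypotheses of Corollary~\ref{cor:chm} are met. Setting $m' = \lfloor \binom{n}{r}\pi'\rfloor$, we obtain
\[
 M\big(H_r(n,\pi')\big) \;\ge\; e^{o(n/\log n)} \, \Sigma(n,m')
\]
with high probability. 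It remains to compare $\Sigma(n,m')$ with $\Sigma(n,m)$, where $m = \lfloor \binom{n}{r}\pi^{\binom{r}{2}} \rfloor$ as in the statement. From the formula for $\Sigma$, we have $\Sigma(n,m')/\Sigma(n,m) = (m')_{n/r}/(m)_{n/r} \ge (m'/m)^{n/r}$; since $m'/m = 1 - n^{-\Omega(1)}$ and $m = \Theta(n\log n)$, the ratio is at least $\exp\bigl(-n^{1-\Omega(1)}\bigr)$, which is $\exp\bigl(-o(n/\log n)\bigr)$. Chaining the three lower bounds then yields the corollary.

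The only real obstacle is quantitative: we need the coupling to be sharp enough that the replacement of $\pi$ by $\pi'$ costs no more than $e^{o(n/\log n)}$ in $\Sigma$. Since $m \sim n\log n/r$ is of the same order as the threshold, a crude $\pi' = (1-o(1))\pi$ coupling would a priori lose a factor $\exp(\Theta(n))$ in $\Sigma$, which is far too much. What saves us is that $p \le n^{-2/r+\gamma}$ with $\gamma$ small forces a polynomial-in-$n$ gap between $\pi'$ and $\pi$ in~\cite{R-copies,Heckel-copies}, i.e.\ $\pi'/\pi = 1 - n^{-\Omega(1)}$, which is comfortably inside the error budget; it is this quantitative aspect of the cited couplings, rather than the mere existence of some $\pi' \sim \pi$, that must be invoked.
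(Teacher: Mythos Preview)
Your proposal is correct and follows exactly the paper's approach: use the Riordan/Heckel coupling with $\pi' = (1-n^{-\delta})\pi$ to get $F_r(G(n,p)) \ge M(H_r(n,\pi'))$, apply Corollary~\ref{cor:chm}, and then compare $\Sigma(n,m')$ with $\Sigma(n,m)$ using $m'/m = 1 - n^{-\Omega(1)}$; your emphasis that the \emph{polynomial} closeness of $\pi'$ to $\pi$ is essential is precisely the point the paper makes (citing Remark~4 of~\cite{R-copies}). One small slip: the inequality $(m')_{n/r}/(m)_{n/r} \ge (m'/m)^{n/r}$ goes the wrong way when $m' < m$, but the intended bound $\Sigma(n,m')/\Sigma(n,m) \ge \exp(-O(n^{1-\delta}))$ follows just as easily from $(m'-i)/(m-i) \ge 1 - O(n^{-\delta})$ for $0 \le i < n/r$; also, $m = \lfloor \binom{n}{r}\pi\rfloor$, not $\lfloor \binom{n}{r}\pi^{\binom{r}{2}}\rfloor$.
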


\begin{proof}
This is essentially immediate from Corollary~\ref{cor:chm} and~\cite[Theorem 1]{R-copies} for $r \ge 4$, or~\cite[Theorem~2]{Heckel-copies} for $r = 3$: the latter results show that there is a coupling of $G(n,p)$ and $H_r(n,\pi')$ with $\pi'\sim p^{\binom{r}{2}}$ such that with high probability the set of copies of $K_r$ in the former contains all hyperedges of the latter. As noted in both papers (see Remark~4 in~\cite{R-copies}), we may in fact take $\pi' = (1-n^{-\delta})\pi$ for some constant $\delta > 0$. Then the corresponding $m$ and $m'$ agree within a factor of $1 - n^{-\delta}$, so $\Sigma(n,m') = \exp(O(n^{1-\delta})) \cdot \Sigma(n,m).$
\end{proof}

The discussion above suggests an obvious question: can we bound the number of $K_r$-factors in $G(n,p)$ from above? Of course, the expectation gives an upper bound. However, writing $\pi = p^{\binom{r}{2}}$, we have 
\begin{equation}\label{EFr}
 \Ex\big[ F_r(G(n,p)) \big] = \Sigma(n,\pi),
\end{equation}
which by \eqref{ratio} is significantly larger than $\Sigma(n,m)$ for $m = \floor{\mu_r}$, and so does not match the lower bound in Corollary~\ref{cor:factorl}. One might expect that switching to $G(n,m)$ would help, but this is not the case. In the hypergraph setting, the expectation in $H_r(n,\pi)$ is `too large' (larger than the typical value) because when $H_r(n,\pi)$ has more hyperedges than typical, it has a lot more matchings. However, the expectation in $G(n,p)$ is not too large due to extra edges, but rather due to extra copies of $K_r$. 

This suggests a strategy: condition on the total number of copies of $K_r$ in $G(n,p)$. This thinking (in the different context of studying colourings of $G(n,1/2)$, which we will return to in future work) is in fact what motivated the present paper; using Theorem~\ref{thm:Kr} we are able to prove the following result.

\begin{theorem}\label{thm:factorp}
Let $r \ge 3$ and $\eps > 0$ be constants. There exists $\gamma = \gamma(r) > 0$ such that if\/ $p \le n^{-2/r + \gamma}$, then
\[
 F_r(G(n,p)) \le e^{O^*(\sqrt{n})} \, \Sigma(n,m)
\]
with high probability, where $m = \big\lfloor \binom{n}{r}p^{\binom{r}{2}} \big\rfloor$.
\end{theorem}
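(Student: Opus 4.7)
The plan is to control $\E[F_r(G(n,p))\,\1[H_r(G(n,p))\in\cG]]$ using Theorem~\ref{thm:Kr}, where $\cG$ is the family of good hypergraphs from Definition~\ref{defKrgood}, and then apply Markov's inequality with some $\omega\to\infty$ slowly. By Lemma~\ref{lem:Hgood:whp}, $H_r(G(n,p))\in\cG$ with high probability, so this restricted expectation suffices. The gain over the trivial bound $\E[F_r]=\Sigma(n,\pi)\gg\Sigma(n,m)$ comes from the restriction to $\cG$: it forces $e(H)\in\mu_r\pm\omega\sqrt{\mu_r}$, which concentrates the clique count enough to remove the inflation $\Sigma(n,\pi)/\Sigma(n,m)=e^{\Theta(n^2/m)}$. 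First I would fix $\gamma>0$ small enough that the $O(\omega\mu_{r+1}+n^{-\gamma})$ error in Theorem~\ref{thm:Kr} is absorbed into $O^*(\sqrt n)$; since $\mu_{r+1}\le n^{\gamma\binom{r+1}{2}}/(r+1)!$ when $p\le n^{-2/r+\gamma}$, any $\gamma$ with $\gamma\binom{r+1}{2}<1/2$ will do.

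Writing $F_r(H)=\sum_M\1[E(M)\subseteq E(H)]$ as a sum over perfect matchings $M$ of $V(K_n)$ into $r$-subsets, applying Theorem~\ref{thm:Kr}, and swapping sums gives
\[
\E\bigl[F_r\,\1[\cdot\in\cG]\bigr]\le e^{-\Lambda+O^*(\sqrt n)}\sum_M\sum_{\substack{H\supseteq M\\H\in\cG}}\pi^{e(H)}(1-\pi)^{N-e(H)}p^{-t(H)}.
\]
By the symmetric-group action on $V(K_n)$, the inner sum is the same for every $M$. Stratifying by $e(H)=m'$ and using $\binom{N}{m'}^{-1}\binom{N-n/r}{m'-n/r}=(m')_{n/r}/(N)_{n/r}$ together with the matching count $n!/(r!^{n/r}(n/r)!)$, the double sum rearranges as
\[
\sum_{m'}\Pr\bigl(\Bin(N,\pi)=m'\bigr)\,\Sigma(n,m')\,\E_T\bigl[p^{-t(M_0\cup T)}\,\1[M_0\cup T\in\cG]\bigr],
\]
where $M_0$ is an arbitrary fixed matching and $T$ is uniform over $(m'-n/r)$-subsets of $\binom{V(K_n)}{r}\setminus E(M_0)$. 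The point is that this reduces the original expectation to a $p^{-t}$-weighted moment in the much simpler model of uniform $m'$-edge hypergraphs; this sort of reduction is exactly what Lemma~\ref{lmoment} should encode.

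Finishing then requires two ingredients: $(i)$ the Stirling-type estimate $\Sigma(n,m')=e^{O^*(\sqrt n)}\Sigma(n,m)$ uniformly over the good range $|m'-\mu_r|\le\omega\sqrt{\mu_r}$, and $(ii)$ a moment bound $\E_T[p^{-t(M_0\cup T)}\,\1[M_0\cup T\in\cG]]\le e^{\Lambda+O^*(\sqrt n)}$. Part~$(i)$ follows from~\eqref{ratio} once one notes that the log-ratio is $O(n\omega/(r\sqrt{\mu_r}))=O^*(\sqrt n)$ whenever $\mu_r\gtrsim n$, which holds throughout the only nontrivial regime (above the $K_r$-factor threshold; below it $F_r=0$ with high probability and the conclusion is automatic). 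Combining $(i)$, $(ii)$, $\sum_{m'}\Pr(\Bin(N,\pi)=m')\le1$, and the cancellation $e^{-\Lambda}\cdot e^{\Lambda}=1$ yields $\E[F_r\,\1[\cdot\in\cG]]\le e^{O^*(\sqrt n)}\Sigma(n,m)$, and Markov's inequality delivers the whp claim.

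The hardest step is $(ii)$. A pointwise bound on $t(H)$ for $H\in\cG$ is hopeless: $t(H)\le\sum_k\binom{k}{2}t_k(H)=O^*(\Lambda)$ by the definition of $\cG$, so $p^{-t(H)}$ can a priori reach $e^{O^*(\Lambda\log(1/p))}$, far above the target. The bound must instead come from averaging. Heuristically, writing $t(M_0\cup T)=\sum_{\{u,v\}}(\mathrm{mult}(\{u,v\})-1)_+$ and exploiting the near-independence of the per-pair terms under the $\cG$-restriction, pairs $\{u,v\}\notin E(M_0)$ contribute $\approx\binom{n-2}{r-2}^2\pi^2/(2p)$ each in log, summing to $\approx\Lambda$; the additional order-$\binom{n-2}{r-2}\pi/p$ contribution from each pair already in $E(M_0)$ must be shown to stay within the $O^*(\sqrt n)$ budget, presumably by using the further structural bounds on $Q_i(H)$ and $C(H)$ built into $\cG$. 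Packaging this moment calculation cleanly is, I expect, the central purpose of Lemma~\ref{lmoment}.
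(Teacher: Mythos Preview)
Your plan is correct and matches the paper's approach almost exactly: restrict to a high-probability event, stratify by the number of cliques $m'$, reduce by symmetry to a fixed matching, and feed the $p^{-t(H)}$-weighted moment into Lemma~\ref{lmoment}. Your ingredients (i) and (ii) are precisely what the paper uses, and the paper's proof of Theorem~\ref{thm:factorp} is little more than the wrap-up you describe in your third paragraph (plus the observation that for a matching $t(F)=0$ and $\Delta_2\ge\Delta_2^0$, so the negative correction term in Lemma~\ref{lmoment} can simply be discarded).

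Two corrections, one minor and one substantive. First, the high-probability event should be $\cW$ (\emph{well behaved}, Definition~\ref{def_wb}), not $\cG$: the extra condition $W_3(H)\le\omega\Delta_3$ is what makes step (ii) go through. Second, your heuristic for (ii)---a per-graph-edge decomposition with ``near-independence of the per-pair terms''---is not the route taken, and would be hard to make rigorous since the per-edge multiplicities are genuinely dependent under the uniform model on $T$. The paper instead argues per \emph{hyperedge}, sequentially: add the random hyperedges $h_{k+1},\ldots,h_{m'}$ one at a time and track
\[
Z_\ell=\sum_{i=2}^\ell\ \sum_{s=2}^{r-1}\1[A_{i,s}]\,\tbinom{s}{2},
\]
where $A_{i,s}$ is the event that $h_i$ meets exactly one earlier hyperedge, in exactly $s$ vertices. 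Lemmas~\ref{ltH} and~\ref{ttm} give $t(\tH_{m'})=Z_{m'}-Z_k+t(F)+O(W_3(\tH_{m'}))$, and on $\cW$ the $W_3$ term is $O^*(\xi)$. The point of $Z_\ell$ is that $\E\bigl[p^{-(Z_\ell-Z_{\ell-1})}\mid\tH_{\ell-1}\bigr]\le 1+c(\ell)$ for an explicit $c(\ell)$ depending only on $\ell$, so the product telescopes cleanly, and $\sum_{\ell>k} c(\ell)$ evaluates to $\bigl(1-\tbinom{k}{2}\tbinom{m'}{2}^{-1}\bigr)(\Delta_2-\Delta_2^0)+O^*(\xi)$. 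This handles both of your ``types'' of graph pairs uniformly---the distinction between pairs in and out of $E(M_0)$ disappears, since all that matters at step $\ell$ is how many previous hyperedges exist. (Also minor: for $r\in\{3,4\}$ the error from Theorem~\ref{thm:Kr} is $\xi$ as in~\eqref{xidef}, not $\mu_{r+1}$; the paper checks $\xi\le n^{1/3+O(\gamma)}$, which is what feeds into the $O^*(\sqrt{n})$ budget.)
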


Together, with Corollary~\ref{cor:chm}, this result establishes Theorem~\ref{thm:factorpweak}, illustrating how our present results here complement the couplings of~\cite{R-copies} and~\cite{Heckel-copies}. Note that the result above is (of course) only interesting when $p$ is at least (or is close to) the threshold $q_r(n)$, since otherwise $F_r(G(n,p)) = 0$ with high probability.

\subsection{A general lemma}

The main ingredient in the proof of Theorem~\ref{thm:factorp} is a general lemma showing how to calculate using the probability distribution of $H_r(G(n,p))$ given by Theorem~\ref{thm:Kr:intro}. In order state it, we will need one more definition. 

Throughout this subsection, let us fix $r \ge 3$ and $p = p(n) \le n^{-2/r+\gamma}$, where $\gamma > 0$ is sufficiently small. Recall from Definition~\ref{defKrgood} the definition of a good hypergraph, and let $W_3(H)$ denote the number of 3-clusters in $H$, so 
$$\Delta_3 = \Ex\big[ W_3\big( H_r( G(n,p) ) \big) \big],$$
cf.~\eqref{Dkdef}. Recall also that $\omega = \omega(n) \to \infty$ as $n \to \infty$ arbitrarily slowly. 

\begin{definition}\label{def_wb}
An $r$-uniform hypergraph $H$ is \emph{well behaved} if it is good and
$$W_3(H) \le \omega \Delta_3.$$
\end{definition}

Recall from Lemma~\ref{lem:Hgood:whp} that $H_r(G(n,p))$ is good with high probability, and observe that therefore, by Markov's inequality, $H_r(G(n,p))$ is well behaved with high probability. Let $\cW$ denote the event that $H_r(G(n,p))$ is well behaved, and for each $m \in \N$ with $|m-\mu_r|\le \omega\sqrt{\mu_r}$, consider the event
\[
 \cW_m = \big\{ H_r(G(n,p)) \text{ is well behaved and } e(H_r(G(n,p))) = m \big\}.
\]
Recall also that $\xi$ was defined in~\eqref{xidef}. 

The following technical lemma is the main result of this subsection. 

\begin{lemma}\label{lmoment}
For each $r \ge 3$, there exists $\gamma > 0$ such that if $p \le n^{-2/r+\gamma}$ and $\mu_r \to \infty$, then the following holds. Let $E_1,\ldots,E_k$ be distinct copies of $K_r$ in $K_n$, and let $F = E_1 \cup \cdots \cup E_k$. For any $m \in \N$ with $|m - \mu_r|\le \omega\sqrt{\mu_r}$, we have
\[ 
\frac{\Pr\big( \cW_m \cap \{ F \subset G(n,p)\} \big)}{\Pr\big( \Bin(N,\pi)=m \big)} \le \frac{(m)_k}{(N)_k} \cdot p^{-t(F)} \cdot e^{- \binom{k}{2}\binom{m}{2}^{-1} ( \Delta_2 \,-\, \Delta_2^0 ) \,+\, O^*(\xi) \,+\, n^{-\Omega(1)} },
\]
where $N = \binom{n}{r}$ and
\begin{equation}\label{tF:def}
 t(F) = k \binom{r}{2} - e(F)
\end{equation}
is the number of repeated graph edges in $E_1,\ldots,E_k$. 
\end{lemma}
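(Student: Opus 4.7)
The plan is to expand the numerator as a sum over well-behaved hypergraphs, apply Theorem~\ref{thm:Kr} termwise, and then prove a combinatorial identity for the resulting weighted count.

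Let $\mathcal{A}_m^F$ denote the set of well-behaved $r$-uniform hypergraphs $H$ on $V(K_n)$ with $e(H) = m$ and $\{V_1,\dots,V_k\} \subset E(H)$. We have
\[
 \Pr\bigl(\cW_m \cap \{F \subset G(n,p)\}\bigr) = \sum_{H \in \mathcal{A}_m^F} \Pr\bigl(H_r(G(n,p)) = H\bigr),
\]
and applying the upper bound from Theorem~\ref{thm:Kr} (using $\Lambda(n,r) = \Delta_2 - \Delta_2^0$) controls each summand by $\pi^m(1-\pi)^{N-m}\, p^{-t(H)}\, e^{-(\Delta_2-\Delta_2^0) + O(\omega\xi + n^{-\gamma})}$. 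Dividing by $\Pr(\Bin(N,\pi)=m) = \binom{N}{m}\pi^m(1-\pi)^{N-m}$ and using $\binom{N}{m}^{-1}(m)_k/(N)_k = \binom{N-k}{m-k}^{-1}$ reduces the lemma to the weighted-count inequality
\[
 \sum_{H \in \mathcal{A}_m^F} p^{-t(H)} \,\le\, \binom{N-k}{m-k}\, p^{-t(F)} \exp\Bigl(\bigl(1 - \tbinom{k}{2}\tbinom{m}{2}^{-1}\bigr)(\Delta_2 - \Delta_2^0) + O^*(\xi) + n^{-\Omega(1)}\Bigr).
\]

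To establish this, I would rewrite the LHS as $\binom{N-k}{m-k} \cdot \E\bigl[p^{-t(H^*)} \1[H^* \in \mathcal{A}_m^F]\bigr]$, where $H^* := F \cup W$ and $W$ is a uniform random $(m-k)$-subset of $[N] \setminus F$. Writing $c_e(H)$ for the number of hyperedges of $H$ whose vertex set covers the graph edge $e$, well-behavedness of $H^*$ forces $c_e(H^*) \le 2$ except on at most $O^*(\xi)$ edges (via $W_3(H) \le \omega\Delta_3$ and Lemma~\ref{Krclust}). In that regime, the exact identities $p^{-c_e(W)} = 1 + (p^{-1}-1)c_e(W)$ for $c_e(W) \le 1$ and $p^{-(c_e(W)-1)_+} = 1 + (p^{-1}-1)\binom{c_e(W)}{2}$ for $c_e(W) \le 2$ give
\[
 p^{-t(H^*)} = p^{-t(F)} \prod_{e\in G(F)}\!\bigl[1 + (p^{-1}{-}1)c_e(W)\bigr] \prod_{e\notin G(F)}\!\bigl[1 + (p^{-1}{-}1)\tbinom{c_e(W)}{2}\bigr] e^{O^*(\xi)}.
\]
Taking expectations and expanding to first order, the contributions come from cross-pairs $(V_i, V) \in F \times W$ (weighted by $(m-k)/(N-k)$) and within-$W$ pairs (weighted by $\binom{m-k}{2}/\binom{N-k}{2}$). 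The arithmetic identity $k(m-k) + \binom{m-k}{2} = \binom{m}{2} - \binom{k}{2}$ then aggregates these two weights into exactly $(1 - \binom{k}{2}/\binom{m}{2})\cdot\binom{m}{2}/\binom{N}{2}$, and the pair-sum $\sum_{\{V,V'\}} \binom{|V\cap V'|}{2}$ (dominated by $s=2$ pairs, giving $\nu_2$) is $(\Delta_2 - \Delta_2^0)/(\pi^2(p^{-1}-1))$ up to errors inside $O^*(\xi)$. Multiplying by $(p^{-1}-1)$ yields the required exponent $(1-\binom{k}{2}/\binom{m}{2})(\Delta_2-\Delta_2^0)$.

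The main technical obstacle will be uniformly controlling the higher-order terms in the product expansions. These correspond to collections of graph edges shared by three or more $K_r$-copies, i.e.~clusters of size $\ge 3$ or pairs with intersection size $s \ge 3$, each of which must be absorbed into $O^*(\xi) + n^{-\Omega(1)}$. This is accomplished using Lemma~\ref{Krclust} (so that $\Delta_i = O^*(\xi) + n^{-\Omega(1)}$ for $i \ge 3$ when $r \ge 5$, with the explicit form of $\xi$ handling $r \in \{3,4\}$), the $Q_i(H^*) \le \omega \Delta_i$ bounds built into well-behavedness via Definition~\ref{defKrgood}, and the $W_3(H^*) \le \omega\Delta_3$ bound. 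A secondary point is the removal of the indicator $\1[H^* \in \mathcal{A}_m^F]$, which is handled by a Markov-inequality argument showing that for any fixed $F$ with $t(F)$ not too large, almost every $W$ produces a well-behaved $H^*$.
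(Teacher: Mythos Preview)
Your reduction via Theorem~\ref{thm:Kr} to the weighted count $\sum_H p^{-t(H)}$ is exactly the paper's first step, and your exact factorisation
\[
  p^{-t(H^*)} = p^{-t(F)} \prod_{e\in G(F)} p^{-c_e(W)} \prod_{e \notin G(F)} p^{-(c_e(W)-1)_+}
\]
together with the $c_e\le 2$ identities is correct; the replacement error is indeed governed by $W_3(H^*)=O^*(\xi)$ (this is precisely what Lemmas~\ref{ltH} and~\ref{ttm} encode). Up to here the two arguments are parallel.

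The gap is at ``taking expectations and expanding to first order''. What you must bound is $\E\bigl[\prod_e(1+x_e(W))\bigr]$. Truncating at first order gives $1+\E[\sum_e x_e]$, which is \emph{not} the answer: the higher-order terms in the expansion are of order $(\Delta_2-\Delta_2^0)^j/j!$ and together they build the factor $e^{\Delta_2-\Delta_2^0}$ that appears in the target bound, so they cannot be thrown into $O^*(\xi)$. To get from $\E[\sum_e x_e]$ to a bound on the full expectation you would need either a correlation/concentration argument for $\sum_e x_e(W)$ or a control of all mixed moments $\E[\prod_{e\in S} x_e]$; neither is supplied, and ``clusters of size $\ge 3$'' do not account for the bulk of these terms.

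The paper handles exactly this point by a sequential device: it adds hyperedges $h_{k+1},\dots,h_m$ one at a time (independently, with replacement), introduces a statistic $Z_\ell$ with $t(\tH_m)=t(F)+Z_m-Z_k+O^*(\xi)$ on the well-behaved event, and observes that the increment $Z_\ell-Z_{\ell-1}$ takes the value $\binom{s}{2}$ with conditional probability at most $(\ell-1)q(s)$. This yields the telescoping bound
\[
  \E\bigl[p^{-(Z_m-Z_k)}\bigr] \le \prod_{\ell=k+1}^m \bigl(1+c(\ell)\bigr) \le \exp\Bigl(\sum_{\ell=k+1}^m c(\ell)\Bigr),
\]
and then $\sum_{\ell=k+1}^m(\ell-1)=\binom{m}{2}-\binom{k}{2}$ produces the coefficient $1-\binom{k}{2}\binom{m}{2}^{-1}$ you correctly identified. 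The sequential structure is the missing technical idea that makes the exponential-moment bound go through.

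A minor point: for an upper bound, removing the indicator $\1[H^*\in\mathcal A_m^F]$ needs no Markov argument. Once the $e^{O^*(\xi)}$ replacement has been made on the well-behaved event, the remaining integrand is nonnegative and you may simply drop the indicator.
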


We remark that the error terms in the lemma depend on $r$, $\gamma$ and $\omega$, but not on $k$. To help motivate the bound in the lemma, notice that
$$\frac{(m)_k}{(N)_k} \cdot \Pr\big( \Bin(N,\pi) = m \big)$$
is the probability that $H_r(n,\pi)$ has exactly $m$ edges, including $E_1,\ldots,E_k$. 

Recall from Section~\ref{ssGnp} that if $H$ is a hypergraph, then we write $t_s(H)$ for the number of pairs of hyperedges of $H$ sharing exactly $s$ vertices, and from~\eqref{tdef} that $t(H)$ is the analogue of~\eqref{tF:def} for hypergraphs. In the proof of Lemma~\ref{lmoment}, we will need the following bounds on $t(H)$ in terms of $t_2(H),\ldots,t_{r-1}(H)$. 


\begin{lemma}\label{ltH}
For any $r$-uniform hypergraph $H$ we have
\begin{equation*} 
 0 \le  \sum_{s=2}^{r-1} \binom{s}{2} t_s(H) - t(H) \le  \binom{r-1}{2}W_3(H).
\end{equation*}
\end{lemma}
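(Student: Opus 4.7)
The plan is to rewrite both $\sum_{s=2}^{r-1} \binom{s}{2} t_s(H)$ and $t(H)$ as sums indexed by the graph edges $e \in E(G(H))$, weighted by $k_e :=$ the number of hyperedges of $H$ containing $e$. Double counting incidences (hyperedge, contained graph edge) gives $\binom{r}{2} e(H) = \sum_e k_e$, so by the definition~\eqref{tdef} of $t(H)$,
\[
 t(H) = \sum_{e \in E(G(H))} (k_e - 1).
\]
Similarly, counting pairs of hyperedges by the graph edges they share, each ordered/unordered pair of hyperedges with intersection of size $s$ contributes $\binom{s}{2}$ common graph edges, so
\[
 \sum_{s=2}^{r-1}\binom{s}{2} t_s(H) = \sum_{e \in E(G(H))} \binom{k_e}{2}.
\]
Subtracting gives the key identity
\[
 \sum_{s=2}^{r-1}\binom{s}{2} t_s(H) - t(H) = \sum_e \Bb{\binom{k_e}{2} - (k_e - 1)} = \sum_{e \in E(G(H))} \binom{k_e - 1}{2}.
\]
The lower bound is immediate since every summand is non-negative.

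For the upper bound I would use the elementary inequality $\binom{k-1}{2} \le \binom{k}{3}$, which is valid for all integers $k \ge 0$ (both sides vanish for $k \le 2$, equality holds at $k = 3$, and the ratio is $3/k$ otherwise). This yields
\[
 \sum_{e}\binom{k_e - 1}{2} \,\le\, \sum_{e}\binom{k_e}{3},
\]
and the right-hand side counts pairs $(e, \{E_1, E_2, E_3\})$ where $E_1, E_2, E_3$ are distinct hyperedges of $H$ all containing the graph edge $e$. Any such triple pairwise shares at least the graph edge $e$, hence pairwise shares at least two vertices, so $\{E_1, E_2, E_3\}$ is a 3-cluster. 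Conversely, for a fixed 3-cluster $\{E_1, E_2, E_3\}$ whose triple intersection has size $s$, the number of valid $e$ is exactly $\binom{s}{2}$; since the three hyperedges are distinct $r$-sets we have $s \le r - 1$, hence $\binom{s}{2} \le \binom{r-1}{2}$. Summing,
\[
 \sum_e \binom{k_e}{3} \,\le\, \binom{r-1}{2}\, W_3(H),
\]
which completes the proof.

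There is no real obstacle here: the argument is entirely a combinatorial double-counting. The only point warranting a sentence of care is the 3-cluster counting step, where we must observe that the same 3-cluster can be listed by multiple choices of common edge $e$, which is precisely why the factor $\binom{r-1}{2}$ appears in the final bound.
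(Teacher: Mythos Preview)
Your proof is correct and follows essentially the same approach as the paper's: both arguments rewrite the two quantities as sums over graph edges $e$ weighted by $\binom{k_e}{2}$ and $k_e-1$ respectively, and then bound the difference via $\binom{k_e-1}{2}\le\binom{k_e}{3}$, interpreting $\sum_e\binom{k_e}{3}$ as counting (edge, 3-cluster) incidences. Your write-up is slightly more explicit in isolating the identity $\binom{k_e}{2}-(k_e-1)=\binom{k_e-1}{2}$, but the ideas are identical.
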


\begin{proof}
Both $\sum_{s=2}^{r-1} \binom{s}{2}t_s(H)$ and $t(H)$ count graph edges that appear in at least two hyperedges of $H$, with certain multiplicities. Edges that appear in exactly $k$ hyperedges have multiplicities $\binom{k}{2}$ and $k-1$, which are equal when $k = 2$, and for $k \ge 3$ the former multiplicity is larger, so this proves the lower bound. 

For the upper bound, note that each edge in $k$ hyperedges contributes zero to the central term if $k = 2$, and at most ${k \choose 2} - k + 1 \le {k \choose 3}$ if $k \ge 3$. Moreover, each such edge is contained in the common intersection of at least ${k \choose 3}$ different $3$-clusters. The central term is therefore at most the sum over all $3$-clusters $\cC$ of the number of graph edges contained in the common intersection of the hyperedges in $\cC$, which is at most the claimed upper bound. 
\end{proof}

We will also need a similar bound for the following variant of $t_s(H)$.

\begin{definition}\label{deftm}
Given an $r$-uniform hypergraph $H$ and $2 \le s \le r-1$, we write $t_s^-(H)$ for the number of unordered pairs $\{e,f\}$ of hyperedges of $H$ such that
\begin{itemize}
\item[(i)] $e$ and $f$ share exactly $s$ vertices, and 
\item[(ii)] neither $e$ nor $f$ intersects any other hyperedge of $H$ in two or more vertices.
\end{itemize}
\end{definition}

In other words, $t_s^-(H)$ counts the number of `isolated' $2$-clusters with intersection size $s$. In particular, note that $t_s^-(H) \le t_s(H)$.

\begin{lemma}\label{ttm}
For any $r$-uniform hypergraph $H$ we have
\begin{equation*} 
 0 \le \sum_{s=2}^{r-1} \binom{s}{2}t_s(H) - \sum_{s=2}^{r-1} \binom{s}{2}t^-_s(H) \le    3\binom{r-1}{2}W_3(H).
\end{equation*}
\end{lemma}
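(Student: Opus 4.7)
The lower bound is immediate because $t_s^-(H) \le t_s(H)$ for each $s$ individually, so I would simply note this in one line.

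For the upper bound, my plan is to associate each pair counted in $t_s(H) - t_s^-(H)$ with a 3-cluster. Call a pair $\{e,f\}$ of hyperedges of $H$ \emph{bad} if $|e \cap f| = s$ for some $2 \le s \le r-1$, and at least one of $e$ or $f$ shares $\ge 2$ vertices with some hyperedge $g \in E(H) \setminus \{e,f\}$. By Definition~\ref{deftm}, we have
$$\sum_{s=2}^{r-1} \binom{s}{2}\bigl( t_s(H) - t_s^-(H) \bigr) \;=\; \sum_{\{e,f\} \text{ bad}} \binom{|e \cap f|}{2}.$$
Since $|e\cap f| \le r-1$ for any bad pair, each term is at most $\binom{r-1}{2}$, so it suffices to show that the number of bad pairs is at most $3W_3(H)$.

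To bound the number of bad pairs, I would define a map $\phi$ from bad pairs to $3$-clusters as follows: for each bad pair $\{e,f\}$, fix some witness $g$ (say lexicographically least) with $g \ne e, f$ and $|g \cap e| \ge 2$ or $|g \cap f| \ge 2$, and set $\phi(\{e,f\}) = \{e,f,g\}$. This is indeed a $3$-cluster, since $e \sim f$ (as they share at least $s \ge 2$ vertices) and the witness forces the triple to be connected under the relation $\sim$. The preimage of any fixed $3$-cluster $\cC = \{e_1,e_2,e_3\}$ has size at most $3$, since any bad pair mapping to $\cC$ is a $2$-element subset of $\{e_1,e_2,e_3\}$. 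Hence the number of bad pairs is at most $3W_3(H)$, giving the claimed bound $3\binom{r-1}{2}W_3(H)$.

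The proof is essentially combinatorial bookkeeping with no real obstacle; the only subtle point is the choice of a canonical witness $g$ to avoid overcounting when a bad pair has several 2+-intersecting neighbors, which is handled by the lexicographic rule. This makes the map $\phi$ well-defined and at-most-3-to-1 without any further effort.
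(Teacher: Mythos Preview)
Your proof is correct and follows essentially the same approach as the paper's: both bound the contribution of each non-isolated $2$-cluster by $\binom{r-1}{2}$ and then argue that each such $2$-cluster can be charged to some $3$-cluster containing it, with each $3$-cluster receiving at most three charges. The only cosmetic difference is that you make the charging explicit via a lexicographic choice of witness $g$, whereas the paper phrases it as a double count over pairs $(e,\cC)$ of a graph edge and a $2$-cluster.
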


\begin{proof}
The lower bound is trivial, since $t_s^-(H) \le t_s(H)$ by definition. Now, observe that $\sum_{s=2}^{r-1} \binom{s}{2}t_s(H)$ counts all pairs $(e,\cC)$, where $\cC$ is a $2$-cluster and $e$ is a graph edge in the intersection of the two hyperedges of $\cC$. If the pair $(e,\cC)$ does not contribute to the sum with $t_s^-$, then there must be some hyperedge of $H$ that forms a 3-cluster with $\cC$. On the other hand, for each 3-cluster in $H$ there are at most three choices for the $2$-cluster $\cC$, and then at most $\binom{r-1}{2}$ choices for the edge $e$ in the intersection of these two hyperedges. 
\end{proof}

We are now ready to prove our technical lemma. 

\begin{proof}[Proof of Lemma~\ref{lmoment}]
Let $\cF_m$ be the set of $r$-uniform hypergraphs on $[n]$ that contain the $k$ hyperedges corresponding to $E_1,\ldots,E_k$ and satisfy the conditions of $\cW_m$, and observe that
\begin{equation}\label{eq:tech:1}
\Pr\big( \cW_m \cap \{ F \subset G(n,p) \} \big) = \sum_{H \in \cF_m} \Pr\big( H_r(G(n,p)) = H \big).
\end{equation}
Every $H \in \cF_m$ is good by Definition~\ref{def_wb}, so by Theorem~\ref{thm:Kr}, the right-hand side of~\eqref{eq:tech:1} is equal to 
\begin{equation}\label{Pform}
\sum_{H \in \cF_m} \pi^{m} (1-\pi)^{N-m} p^{-t(H)} \exp\bb{-\Delta_2+\Delta_2^0+O^*(\xi)+n^{-\Omega(1)}}. 
\end{equation}
Note that $p^{-t(H)}$ is the only term that depends on $H$, so our aim is to bound
$$\sum_{H \in \cF_m} p^{-t(H)}.$$
Roughly speaking, the idea is to build a random hypergraph $H$ starting with the $k$ given edges and adding $m-k$ new random edges one-by-one, considering how the $p^{-t(H)}$ term evolves as we add edges, which we can conveniently do working in terms of expectations.

Let $h_1,\ldots,h_k$ be the hyperedges corresponding to $E_1,\ldots,E_k$, and let the remaining edges $h_{k+1},\ldots,h_m$ be chosen independently and uniformly at random from all $N = \binom{n}{r}$ possible hyperedges. For $k \le i \le m$ let $\tH_i$ be the random hypergraph with edge-set $\{h_1,\ldots,h_i\}$. Since $H \in \cF_m$ implies that $H$ has $m$ distinct hyperedges, there are exactly $(m-k)!$ outcomes leading to $\tH_m = H$ for any given $H \in \cF_m$. Thus, in this probability space, 
\[
 \Pr( \tH_m = H ) = \frac{(m-k)!}{N^{m-k}},
\]
and hence
\begin{align}\label{eq:tech:2}
\sum_{H \in \cF_m} p^{-t(H)} & \, = \frac{N^{m-k}}{(m-k)!} \sum_{H \in \cF_m} p^{-t(H)} \cdot \Pr\big( \tH_m = H \big) \nonumber\\
& \, = \frac{N^{m-k}}{(m-k)!} \cdot \Ex\pigl[ p^{-t(\tH_m)} \1\big[ \tH_m \in \cF_m \big] \pigr],
\end{align}
where the expectation is over the random choice of $h_{k+1},\ldots,h_m$. 

To bound the expectation, we will first replace $t(\tH_m)$ by a simpler quantity which is bounded from above and below by the sums in Lemmas~\ref{ltH} and~\ref{ttm}. To do so, for each $2 \le i \le m$ and $2 \le s \le r-1$ let us write $A_{i,s}$ for the event that $h_i$ shares exactly $s$ vertices with one previous $h_j$, and does not meet any other $h_{j'}$ with $j' < i$ in two or more vertices. Now, for each $2 \le \ell \le m$, define
\[
Z_\ell = \sum_{i = 2}^\ell \sum_{s = 2}^{r-1} \1\big[ A_{i,s} \big] \binom{s}{2},
\]
and observe that 
\[
 \sum_{s=2}^{r-1} {s \choose 2} t_s^-(\tH_\ell) \le Z_\ell \le  \sum_{s=2}^{r-1} {s \choose 2} t_s(\tH_\ell),
\]
since $Z_\ell$ counts the graph edges in a subset of the intersections between pairs of hyperedges of $\tH_\ell$ (this proves the upper bound), and this subset includes all isolated $2$-clusters (this proves the lower bound). By Lemmas~\ref{ltH} and~\ref{ttm}, it follows that
$$t(\tH_\ell) = Z_\ell + O(W_3(\tH_\ell)).$$
Applying this for $\ell = k$ and $\ell = m$, and noting that $W_3(\tH_k) \le W_3(\tH_m)$, we obtain
\[
 t(\tH_m) = t(\tH_k) + Z_m - Z_k + O(W_3(\tH_m)).
\]
Now, by Definition~\ref{def_wb} and Lemma~\ref{lcKrclust}, if $H$ is well behaved, then 
$$W_3(H) \le \omega\Delta_3 = O(\omega\xi)=O^*(\xi).$$
Thus, on the event $\tH_m\in \cF_m$, we have
\[
 t(\tH_m) = t(\tH_k) + Z_m - Z_k + O^*(\xi).
\]
Recalling~\eqref{eq:tech:2}, this implies that
\begin{equation}\label{Sbd2}
\sum_{H \in \cF_m} p^{-t(H)} \le e^{O^*(\xi)} \frac{N^{m-k}}{(m-k)!} \cdot p^{-t(F)} \cdot \Ex\pigl[ p^{-Z_m+Z_k} \pigr],
\end{equation}
since $\log(1/p) = O^*(1)$ and $t(\tH_k) = t(F)$, by definition.

To complete the proof, we will inductively bound $\Ex\big[  p^{-Z_\ell+Z_k} \big]$. To be precise, we claim that for each $k < \ell \le m$, we have 
\begin{equation}\label{eq:tech:3}
 \Ex\pigl[ \, p^{-Z_\ell + Z_k} \,\big|\, \tH_{\ell-1} \pigr] \le \big( 1 + c(\ell) \big) \cdot p^{-Z_{\ell-1}+Z_k},
\end{equation}
where
\[
c(\ell) = (\ell - 1) \sum_{s = 2}^{r-1} \binom{r}{s} \binom{n-r}{r-s} {n \choose r}^{-1} \big( p^{-\binom{s}{2}} - 1 \big).
\]
To show this, let us write
\[
q(s) := \binom{r}{s}\binom{n-r}{r-s} {n \choose r}^{-1}
\]
for the probability that a random $r$-set shares exactly $s$ vertices with a given $r$-set, and observe that
\[
 \Pr\big( A_{\ell,s} \mid \tH_{\ell-1} \big) \le (\ell-1)q(s)
\]
for each $2 \le s \le r - 1$, by the union bound over edges of $\tH_{\ell-1}$. It follows that 
$$\Pr\bigg( Z_\ell - Z_{\ell-1} = {s \choose 2} \;\Big|\; \tH_{\ell-1} \bigg) \le (\ell-1)q(s)$$
for each $2 \le s \le r - 1$, and otherwise $Z_\ell = Z_{\ell-1}$. 
This proves~\eqref{eq:tech:3}, and it follows that
\begin{equation}\label{Exbd}
 \Ex\pigl[ p^{-Z_m+Z_k} \pigr] \le \prod_{\ell = k+1}^m \big( 1 + c(\ell) \big) \le \exp\bigg( \sum_{\ell = k+1}^m c(\ell) \bigg).
\end{equation}

It only remains to bound the sum in~\eqref{Exbd}. To do so, observe first that 
\begin{equation}\label{csum}
 \sum_{\ell = k+1}^m c(\ell) = \bigg( \binom{m}{2}-\binom{k}{2} \bigg) \sum_{s = 2}^{r-1} q(s) \big( p^{-\binom{s}{2}} - 1 \big),
\end{equation}
since $\sum_{\ell=k+1}^m (\ell-1) = \binom{m}{2} - \binom{k}{2}$. Note also that
\[
 \nu_s^0 = \frac{q(s)}{2} \binom{n}{r}^2 p^{2\binom{r}{2}} = q(s) \cdot \frac{\mu_r^2}{2},
\]
by~\eqref{nusdef} and~\eqref{nus0def}, and the definition of $q(s)$, and hence 
\[
q(s) \cdot \binom{m}{2} = \big( 1 + O^*(1/\sqrt{\mu_r}) \big) \cdot \nu_s^0,
\]
since $| m - \mu_r| \le \omega\sqrt{\mu_r}$. Since $\nu_s=p^{-\binom{s}{2}}\nu_s^0$, it follows from~\eqref{csum} that
\begin{align}\label{csum2}
\sum_{\ell = k+1}^m c(\ell) & \, = \big( 1 + O^*(1/\sqrt{\mu_r}) \big) \bigg( 1 - \binom{k}{2}\binom{m}{2}^{-1} \bigg) \sum_{s = 2}^{r-1} \big( \nu_s - \nu_s^0 \big) \nonumber\\
& \, = \bigg( 1 - \binom{k}{2}\binom{m}{2}^{-1} \bigg) \big( \Delta_2 - \Delta_2^0 \big) + O^*\bigg( \frac{\Delta_2}{\sqrt{\mu_r}} \bigg),
\end{align}
since $\Delta_2 = \sum_{s = 2}^{r-1} \nu_s$ and $\Delta_2^0 = \sum_{s = 2}^{r-1} \nu_s^0$.

We are finally ready to put the pieces together. First, by~\eqref{eq:tech:1} and~\eqref{Pform}, 
$$\Pr\big( \cW_m \cap \{ F \subset G(n,p) \} \big) \le \pi^{m} (1-\pi)^{N-m} e^{-\Delta_2+\Delta_2^0+O^*(\xi)+n^{-\Omega(1)}} \sum_{H \in \cF_m} p^{-t(H)}.$$
Next, by~\eqref{Sbd2},~\eqref{Exbd} and~\eqref{csum2}, we have 
$$\sum_{H \in \cF_m} p^{-t(H)} \le \frac{N^{m-k}}{(m-k)!} \cdot p^{-t(F)} \cdot e^{( 1 - \binom{k}{2}\binom{m}{2}^{-1} ) ( \Delta_2 - \Delta_2^0 ) + O^*(\xi) + O^*(\Delta_2/\sqrt{\mu_r})}.$$
Noting that 
$${N \choose m}^{-1} \frac{N^{m-k}}{(m-k)!} = e^{O(m^2/N)} \frac{(m)_k}{(N)_k} = e^{O(\xi)} \frac{(m)_k}{(N)_k},$$
since $k \le m$ and $m^2/N = O(n^{2-r+O(\gamma)}) = O(\xi)$, and recalling from~\eqref{ratxi} that
$$\frac{\Delta_2}{\sqrt{\mu_r}} = O(\xi) + n^{-\Omega(1)},$$ 
it follows that
$$\frac{\Pr\big( \cW_m \cap \{ F \subset G(n,p)\} \big)}{\Pr\big( \Bin(N,\pi) = m \big)} \le \frac{(m)_k}{(N)_k} \cdot p^{-t(F)} \cdot e^{- \binom{k}{2}\binom{m}{2}^{-1} ( \Delta_2 - \Delta_2^0 ) + O^*(\xi) + n^{-\Omega(1)}},$$
as required. 
\end{proof}

It is straightforward to deduce Theorem~\ref{thm:factorp} from Lemma~\ref{lmoment}. 

\begin{proof}[Proof of Theorem~\ref{thm:factorp}]
We may assume that $p \ge q_r(n)/2$, since otherwise we would have $F_r(G(n,p)) = 0$ with high probability. In particular, it follows from this assumption that $p \ge n^{-2/r}$, and thus $\mu_r \to \infty$ and $\xi = \Omega(1)$. Applying Lemma~\ref{lmoment} to each of the $K_r$-factors in $K_n$, and setting $k = n/r$, we obtain
\[
\frac{\Ex\big[ F_r(G(n,p)) \ind{\cW_m}  \big]}{\Pr\big( \Bin(N,\pi)=m \big)} \le F_r(K_n) \cdot \frac{(m)_k}{(N)_k} \cdot e^{- \binom{k}{2}\binom{m}{2}^{-1} ( \Delta_2 - \Delta_2^0 ) \,+\, O^*(\xi)},
\]
for each $m$ with $|m-\mu_r|\le \omega\sqrt{\mu_r}$, since if $F$ is a $K_r$-factor then $t(F) = 0$. 

Now, since $\Delta_2 \ge \Delta_2^0$, and noting that
$$\Sigma(n,m) = F_r(K_n) \cdot \frac{(m)_{n/r}}{(N)_{n/r}},$$
it follows that
$$\frac{\Ex\big[ F_r(G(n,p)) \ind{\cW_m}  \big]}{\Pr\big( \Bin(N,\pi)=m \big)} \le e^{O^*(\xi)} \,\Sigma(n,m).$$
Summing over all $m$ in the range, it follows that
$$\Ex\big[ F_r(G(n,p)) \ind{\cW} \big] \le e^{O^*(\xi)} \, \Sigma(n,m^+),$$
where $\cW$ is the event that $H_r(G(n,p))$ is well behaved (which in particular implies that $|m-\mu_r|\le \omega\sqrt{\mu_r}$), and $m^+ = m + \omega\sqrt{m}$. 
Since 
$$\frac{\Sigma(n,m^+)}{\Sigma(n,m)} = \bigg( \frac{m^+}{m} \bigg)^{O(n)} = \exp\bigg( O^*\bigg( \frac{n}{\sqrt{m}} \bigg) \bigg) = e^{O^*(\sqrt{n})}$$ 
and $\xi \le n^{1/3 + O(\gamma)}$, it follows that
\[
\Ex\big[ F_r(G(n,p)) \ind{\cW} \big] \le e^{O^*(\sqrt{n})} \, \Sigma(n,m).
\]
Since $\cW$ holds with high probability, by Markov's inequality the right-hand side above is an upper bound on $F_r(G(n,p))$ that holds with high probability, completing the proof.
\end{proof}

Replacing the trivial bound $\Delta_2 \ge \Delta_2^0$ with a simple calculation gives a stronger result for fixed $m$. To be precise, one can easily check that
$$\Delta_2 = \big( 1 + o(1) \big) \frac{\mu_r^2}{p}\binom{r}{2}^2 \binom{n}{2}^{-1}$$ 
and $\Delta_2^0 \sim p \Delta_2 = o(\Delta_2)$, and hence, for $k = n/r$ and $m \sim \mu_r$, we have
$$\binom{k}{2}\binom{m}{2}^{-1} \big( \Delta_2 - \Delta_2^0 \big) \sim \frac{(r-1)^2}{2p}.$$ 
It therefore follows from the proof above that
$$\frac{\Ex\big[ F_r(G(n,p)) \ind{\cW_m} \big]}{\Pr\big( \Bin(N,\pi)=m \big)} \le \exp\bigg( - (1+o(1))\frac{(r-1)^2}{2p} \,+\, O^*(\xi) \bigg) \cdot \Sigma(n,m)$$
for every $m \in \N$ with $|m-\mu_r|\le \omega\sqrt{\mu_r}$. This bound illustrates an interesting phenomenon: in $G(n,p)$, given that there are exactly $m$ copies of $K_r$, the conditional expected number of $K_r$-factors is noticeably (around $\exp(-\Theta(p^{-1}))$) smaller than the expected number of matchings in $H_r(n,m)$. Intuitively, the reason for this is that $H_r(G(n,p))$ prefers intersecting pairs of copies of $K_r$ over disjoint ones, so given that there are $m$ copies of $K_r$, we expect more intersecting pairs 
than in $H_r(n,m)$, and so fewer factors.

\subsection{The lower bound}\label{ssK+}

In this section we outline the proof of Theorem~\ref{thhm}, which as discussed earlier is a slight modification of Kahn's proof of Theorem 1.2 in~\cite{Kahn-asymp}.

\begin{proof}[Proof of Theorem~\ref{thhm}]
As in~\cite{Kahn-asymp}, start with the complete $r$-uniform hypergraph and remove hyperedges one by one uniformly at random, letting $\Phi_t$ be the number of matchings after $t$ steps, so the random variable we are interested in is $\Phi_{N-m}$ where $N=\binom{n}{r}$. Let $\xi_t$ be the random fraction of matchings removed in step $t$, and $\gamma_t$ its expectation, which is simply $(n/r)/(N-t+1)$, since each of the $n/r$ edges in some particular matching has probability $1/(N-t+1)$ of being chosen for removal in step $t$. Note that
\begin{equation}\label{PhiM}
 \Phi_m = \Phi_0 \prod_{t=1}^{N-m} (1-\xi_t),
\end{equation}
and, considering the probability that any particular matching survives,
\[
 \E[\Phi_m] = \Phi_0 \prod_{t=1}^{N-m} (1 - \gamma_t).
\]
One key point (of many!) in~\cite{Kahn-asymp} is that $\alpha_t = \xi_t-\gamma_t$ is a martingale difference sequence, and furthermore that (off some bad event, on which he freezes the martingale) $\xi_t = O(\gamma_t)$ (see~(16) in~\cite{Kahn-asymp}), so $\alpha_t = O(\gamma_t)$. It follows that $\sum_{t=1}^{N-m} \alpha_t = O(\sqrt{S})$ with high probability, where $S = \sum_{t=1}^{N-m}\gamma_t^2=O(n/\log n)$. To evaluate the product in \eqref{PhiM}, one considers the log. Since $\xi_t = O(\gamma_t) = o(1)$, we have $\log(1-\xi_t) = - \xi_t + O(\gamma_t^2)$, and this is where the final error in~\cite{Kahn-asymp} comes from: it is written as $o(n)$, but is in fact $O(S) = O(n/\log n)$.

We can improve on this by considering the variance of the $\xi_t$. Although not essential, it is cleaner to consider the product
\[
 \frac{\Phi_m}{\E[\Phi_m]} = \prod_{t=1}^{N-m} \frac{1-\xi_t}{1-\gamma_t}
 = \prod_{t=1}^{N-m} \left(1- \frac{\xi_t-\gamma_t}{1-\gamma_t}\right).
\]
Let $\alpha_t'=(\xi_t-\gamma_t)/(1-\gamma_t)=\alpha_t/(1-\gamma_t)$. This is again a martingale difference sequence, and each term is $O(\gamma_t)$. So we again have that $\sum_{t=1}^{N-m} \alpha_t' = O(\sqrt{S})$ with high probability. Moreover, since $\alpha_t' = o(1)$ for all $t \in [N - m]$, we have
\[
 \log \frac{\Phi_m}{\E[\Phi_m]} = - \sum_{t=1}^{N-m} \alpha_t' - \frac{1+o(1)}{2} \sum_{t=1}^{N-m} (\alpha_t')^2.
\]
Let $\cF_t$ denote the $\sigma$-algebra corresponding to the information revealed after $t$ steps, and define
\[
 V_t = \Ex\big[ (\alpha_t')^2 \mid \cF_{t-1} \big] = (1-\gamma_t)^{-2}\Var\big( \xi_t \mid \cF_{t-1} \big).
\]
Since $\alpha_t'=O(\gamma_t)$ we have that $(\alpha_t')^2 - V_t$ is a martingale difference sequence with each term of size $O(\gamma_t^2) = o(\gamma_t)$, and it follows that the sum of these terms is with high probability $o(\sqrt{S})$, so it just remains to bound $\sum_{t=1}^{N-m} V_t$. If we only use the bound $V_t = O(\gamma_t^2)$ effectively used in \cite{Kahn-asymp}, we are still left with an $O(n/\log n)$ error term. However, although it is not stated there, \cite{Kahn-asymp} contains all the ingredients to prove a stronger bound. The key is that, in addition to the deterministic bound $\xi_t = O(\gamma_t)$, 
the property $\mathfrak{E}$ defined just below (62) implies that (off the same bad event) $\xi_t \sim \gamma_t$ with conditional probability $1 - o(1)$. Together, these two bounds imply that $\Var\big( \xi_t \mid \cF_{t-1} \big) = o(\gamma_t^2)$, so $V_t = o(\gamma_t^2)$. Hence we have $\sum_{t=1}^{N-m} V_t = o(S)$, giving the claimed improvement.
\end{proof}

Note that, by the same argument, a stronger result would follow from any improvement of this `approximate' flatness of the distribution of $\xi_t$ given $\cF_{t-1}$.

\section*{Acknowledgements}

The research described in this paper was partly carried out during several visits of the second author to IMPA. The authors are grateful to IMPA for providing us with a wonderful working environment.


\begin{thebibliography}{9}


\bibitem{ABCDJMRS} P.~Allen, J.~Böttcher, J.~Corsten, E.~Davies, M.~Jenssen, P.~Morris, B.~Roberts and J.~Skokan, 
A robust Corrádi--Hajnal theorem, 
\emph{Random Struct. Alg.}, {\bf 65} (2024), 61--130. 

\bibitem{AKR} S.~Antoniuk, N.~Kamčev and C.~Reiher, 
Clique factors in randomly perturbed graphs: the transition points, 
arXiv:2410.11003, 2024.

\bibitem{FLM} A.~Ferber, L.~Hardiman and A.~Mond, 
Counting Hamilton cycles in Dirac hypergraphs, 
\emph{Combinatorica}, {\bf 43} (2023), 665--680. 

\bibitem{FKNP} K.~Frankston, J.~Kahn, B.~Narayanan and J.~Park, 
Thresholds versus fractional expectation-thresholds, 
\emph{Ann. Math.}, {\bf 194} (2021), 475--495. 

\bibitem{Heckel} A.~Heckel, 
Non-concentration of the chromatic number of a random graph, 
\emph{J.~Amer.~Math.~Soc.}, \textbf{34} (2021), 245--260.

\bibitem{Heckel-copies} A.~Heckel,
 Random triangles in random graphs, 
 \emph{Random Struct. Alg.}, {\bf 59} (2021), 616--621. 

\bibitem{HKMP-hitting} A.~Heckel, M.~Kaufmann, N.~M\"uller and M.~Pasch,
 The hitting time of clique factors,
 \emph{Random Struct. Alg.}, {\bf 65} (2024), 275--312.

\bibitem{HP} A.~Heckel and K.~Panagiotou,
 Colouring random graphs: Tame colourings,
 arXiv:2306.07253v2, 2023.
 
\bibitem{HR}  A.~Heckel and O.~Riordan,
 How does the chromatic number of a random graph vary?,
 \emph{J. London Math. Soc.} {\bf 108} (2023), 1769--1815.
 
\bibitem{Janson} S.~Janson,                                                                                   
 Poisson approximation for large deviations,                                                                  
 \emph{Random Struct. Alg.}, {\bf 1} (1990), 221--229.                                                 
                                                                                                              
\bibitem{JKV} A. Johansson, J. Kahn and V. Vu,
 Factors in random graphs, 
 \emph{Random Struct. Alg.}, {\bf 33} (2008), 1--28.

\bibitem{JLS} F.~Joos, R.~Lang and N.~Sanhueza-Matamala, 
Robust hamiltonicity, 
arXiv:2312.15262, 2023.

\bibitem{Kahn-asymp} J.~Kahn,
 Asymptotics for Shamir's problem,
 \emph{Adv. Math.}, {\bf 422} (2023), 109019.

\bibitem{Kahn-hitting} J. Kahn,
 Hitting times for Shamir's problem,
 \emph{Trans. Amer. Math. Soc.}, {\bf 375} (2022), 627--668.
 
\bibitem{KKKOP} D.Y.~Kang, T.~Kelly, D.~Kühn, D.~Osthus and V.~Pfenninger, 
Perfect matchings in random sparsifications of Dirac hypergraphs, 
\emph{Combinatorica} (2024), 1--34.
 
\bibitem{KMP} T.~Kelly, A.~Müyesser and A.~Pokrovskiy, 
Optimal spread for spanning subgraphs of Dirac hypergraphs, 
\emph{J.~Combin.~Theory, Ser.~B}, {\bf 169} (2024), 507--541.
 
\bibitem{KSW} M.~Kwan, R.~Safavi and Y.~Wang, 
Counting perfect matchings in Dirac hypergraphs, 
arXiv:2408.09589, 2024.
 
\bibitem{MP} R.~Montgomery and M.~Pavez-Signé, 
Counting spanning subgraphs in dense hypergraphs,  
\emph{Combin. Probab. Computing}, {\bf 33} (2024), 729--741.
 
\bibitem{MNPS-non} F.~Mousset, A.~Noever, K.~Panagiotou and W.~Samotij,
 On the probability of nonexistence in binomial subsets,
 \emph{Ann. Probab.} {\bf 48} (2020), 493--525.
 
\bibitem{PP} J.~Park and H.T.~Pham, 
A proof of the Kahn--Kalai conjecture, 
\emph{J.~Amer. Math.~Soc.}, {\bf 37} (2024), 235--243. 

\bibitem{PSSS} H.T.~Pham, A.~Sah, M.~Sawhney and M.~Simkin, 
A toolkit for robust thresholds, 
arXiv:2210.03064, 2022.
 
\bibitem{R-copies} O. Riordan,
 Random cliques in random graphs and sharp thresholds for $F$-factors,
 \emph{Random Struct. Alg.}, {\bf 61} (2022), 619--637.

\bibitem{RW-Janson} O.~Riordan and L.~Warnke,
 The Janson inequalities for general up-sets,
 \emph{Random Struct. Alg.}, {\bf 46} (2015), 391--395.

\end{thebibliography}
\end{document}